% SIAM Article Template
\documentclass{siamart}

% Information that is shared between the article and the supplement
% (title and author information, macros, packages, etc.) goes into
% ex_shared.tex. If there is no supplement, this file can be included
% directly.

% SIAM Shared Information Template
% This is information that is shared between the main document and any
% supplement. If no supplement is required, then this information can
% be included directly in the main document.

% Packages and macros go here
\usepackage{lipsum}
\usepackage{amsfonts}
\usepackage{graphicx,epstopdf}
\usepackage[caption=false]{subfig}
\usepackage{epstopdf}
\usepackage{algorithmic}
\ifpdf
  \DeclareGraphicsExtensions{.eps,.pdf,.png,.jpg}
\else
  \DeclareGraphicsExtensions{.eps}
\fi

% Declare title and authors, without \thanks
\newcommand{\TheTitle}{A posteriori error estimation for the $p$-curl problem} 
\newcommand{\TheAuthors}{Andy T. S. Wan, and Marc Laforest}

% Sets running headers as well as PDF title and authors
\headers{\TheTitle}{\TheAuthors}

% Title. If the supplement option is on, then "Supplementary Material"
% is automatically inserted before the title.
\title{{\TheTitle}\thanks{This work was supported by FQNRT, NSERC and MITACS.}}

% Authors: full names plus addresses.
\author{
  Andy T. S. Wan\thanks{Department of Mathematics and Statistics, University of Northern British Columbia, Prince George, British Columbia, Canada
    (\email{andy.wan@unbc.ca}).}
  \and
  Marc Laforest\thanks{d\'epartement de math\'ematiques et g\'enie industriel, \'Ecole Polytechnique de Montr\'eal, Montr\'eal, Qu\'ebec, Canada (\email{marc.laforest@polymtl.ca}).}
}

\usepackage{amsopn}

\usepackage{amstext,amsfonts,amsmath,stmaryrd,amssymb}

\newsiamremark{hypothesis}{Hypothesis}

\providecommand{\norm}[1]{\left\lVert#1\right\rVert}
\providecommand{\pair}[1]{\langle#1\rangle_{\Omega}}
\providecommand{\pairh}[1]{\langle#1\rangle_{\Omega_h}}

\providecommand{\rpair}[1]{\left(#1\right)_{\Omega}}
\providecommand{\rpairh}[1]{\left(#1\right)_{\Omega_h}}
\providecommand{\rpairk}[1]{\left(#1\right)_{K}}
\providecommand{\rpaire}[1]{\left(#1\right)_{F}}
\providecommand{\jump}[1]{\llbracket#1\rrbracket}
\providecommand{\Tau}{\mathcal{T}}

\newcommand{\bb}{\boldsymbol}
\renewcommand{\L}[1]{L^{#1}}
\newcommand{\Ldom}[1]{L^{#1}(\Omega)}

\newcommand{\W}[1]{W^{#1,p}(\Omega)}
\newcommand{\Wz}{W_0^{1,p}(\Omega)}
\newcommand{\Wcurl}{W^p(\text{curl};\Omega)}
\newcommand{\Wcurlh}{W^p(\text{curl};\Omega_h)}
\newcommand{\Wzcurl}{W^p_0(\text{curl};\Omega)}
\newcommand{\Wzcurlh}{W^p_0(\text{curl};\Omega_h)}
\newcommand{\Wcurlz}{W^p(\text{curl}^0;\Omega)}
\newcommand{\Wdiv}{W^p(\text{div};\Omega)}

\newcommand{\Wdivz}{W^p(\text{div}^0;\Omega)}
\newcommand{\X}{V^p(\Omega)}
\newcommand{\Xq}{V^q(\Omega)}
\newcommand{\Xh}{V_{h}}
\newcommand{\Xhz}{V_{h,0}}
\newcommand{\curl}{\nabla\times}
\newcommand{\grad}{\nabla}
\renewcommand{\div}{\nabla\cdot}
\newcommand{\pdt}{\partial_t}
\newcommand{\param}{\alpha}

\newcommand{\Res}{\operatorname{Res}}

\newcommand{\NC}{\operatorname{NC}}

%%% Local Variables: 
%%% mode:latex
%%% TeX-master: "ex_article"
%%% End: 

% Optional PDF information
\ifpdf
\hypersetup{
  pdftitle={\TheTitle},
  pdfauthor={\TheAuthors}
}
\fi

% The next statement enables references to information in the
% supplement. See the xr-hyperref package for details.

% \externaldocument{ex_supplement}

% FundRef data to be entered by SIAM
%<funding-group>
%<award-group>
%<funding-source>
%<named-content content-type="funder-name"> 
%</named-content> 
%<named-content content-type="funder-identifier"> 
%</named-content>
%</funding-source>
%<award-id> </award-id>
%</award-group>
%</funding-group>

\begin{document}

\maketitle

% REQUIRED
\begin{abstract}
We derive a posteriori error estimates for a semi-discrete finite element approximation of a nonlinear eddy current problem arising from applied superconductivity, known as the $p$-curl problem. In particular, we show the reliability for non-conforming N\'{e}d\'{e}lec elements based on a residual type argument and a Helmholtz-Weyl decomposition of $\Wzcurl$. As a consequence, we are also able to derive an a posteriori error estimate for a quantity of interest called the AC loss. The nonlinearity for this form of Maxwell's equation is an analogue of the one found in the $p$-Laplacian. It is handled without linearizing around the approximate solution. The non-conformity is dealt by adapting error decomposition techniques of Carstensen, Hu and Orlando. Geometric non-conformities also appear because the continuous problem is defined over a bounded $C^{1,1}$ domain while the discrete problem is formulated over a weaker polyhedral domain. The semi-discrete formulation studied in this paper is often encountered in commercial codes and is shown to be well-posed. The paper concludes with numerical results confirming the reliability of the a posteriori error estimate.
\end{abstract}

% REQUIRED
\begin{keywords}
finite element, a posteriori, error estimation, Maxwell's equations, nonconforming, nonlinear, N\'ed\'elec element, p-curl problem, eddy current, divergence free
\end{keywords}

% REQUIRED
\begin{AMS}
35K65, %degenerate parabolic equations
65M60, %finite element, ....
65M15, % error bounds
78M10  % EM theory, finite element methods
\end{AMS}

%********************
\section{Introduction}
The optimal design of the next generation of high-temperat-\\ure superconductor (HTS) devices will require fast and accurate approximations of the time-dependent magnetic field 
inside complex domains \cite{GriAE14}. Potential devices include, among others, passive current-fault limiters, MagLev trains and power links in the CERN accelerator.
In a superconductor, any reversal of variation rate in the magnetic field generates a strong front in the current density profile, as well as a discontinuity
in the magnetic field profile, which is not traditionally encountered in computational electromagnetism. It is therefore clear that a posteriori error estimators 
can play an important role in the simulation of such devices; first to achieve design tolerances and secondly to implement adaptive mesh refinement.

At power frequencies of the applications concerned, and when the operating conditions are such that we do not exceed significantly
the critical current of superconducting wires, the eddy current problem with the so-called power-law model for the resistivity
adequately describes the evolution of the magnetic 
field $\bb{u} = \bb{u} (t, \bb{x} )$ for $(t, \bb{x} ) \in I \times \Omega \subset \mathbb{R}^+ \times \mathbb{R}^3$ by 
\begin{align}
   \pdt\bb{u}+\curl \left[ \rho(\curl \bb{u})\curl \bb{u}\right]&=\bb{f},  \hspace{4mm} \mbox{ in } I\times\Omega,   \label{basic_model}  \\
     \nabla \cdot \bb{u} &= 0,  \hspace{4mm}\mbox{ in } I\times\Omega,
\end{align}
where $\bb{f}$ is known and the resistivity $\rho$  is modeled by
\begin{equation}
                 \rho = \param | \nabla \times \bb u |^{p-2},                                                                                            
\end{equation}
for some positive material properties $\param$ and $p $ typically between $20$ and $100$. 
The model also includes initial conditions ${\bb u}(0,\cdot) = {\bb u}_0(\cdot)$ and boundary conditions.
Although the boundary conditions are often imposed indirectly by means of a global current constraint,
this work will focus on straightforward, but more restrictive, tangential boundary conditions
$$
    \bb n \times \bb u = \bb g, \quad \text{over } I \times \partial \Omega,
$$
where $\bb n$ is the exterior normal along the boundary. For consistency, 
the initial conditions ${\bb u}_0$ and the source term $\bb f$ must be divergence free. 
More general boundary conditions were studied by Miranda et al. \cite{MirRodSan08}. The precise assumptions 
leading to this model can be found in \cite{LafSirWan14} and a description of how this macroscopic model 
relates to microscopic models of superconductivity can be found in \cite{Cha00}.

There is an obvious analogy between the operator $\nabla \times ( |\nabla \times \bb u |^{p-2} \nabla \times {\bb u})$
of the model \eqref{basic_model} and the $p$-Laplacian, namely $\nabla \cdot ( |\nabla u |^{p-2} \nabla u )$. 
Researchers, Yin \cite{Yin99,Yin02}, as well as Miranda, Rodrigues and Santos \cite{MirRodSan08} 
have exploited this analogy in order to construct a well-posedness theory for the continuous
problem. The key parts of that theory is the observation
that the $p$-curl is monotone and the domain must have a smooth $C^{1,1}$ boundary. 
Formal convergence as $p \to \infty$ of the power-law model to
the Bean model has also been established in 2D \cite{BarPri00} and in 3D \cite{YinLiZou02}. Smoothness of the
boundary is an essential constraint coming from the harmonic analysis in $W^{1,p}$ spaces \cite{JerKen95,Mit04,SimSoh92}.

As far as we know, the theory of convergence of finite element approximation using N\'ed\'elec elements, within the same $W^{1,p}$ framework of Yin, has yet to be established. On the other hand, using an electric field formulation of the $p$-curl problem, Slodi\u{c}ka and Jan\'{i}kov\'{a} showed convergence results within $L^2$ spaces for backward Euler semi-discretizations and fully-discretizations using linear N\'ed\'elec elements in \cite{Slo06,JanSlo08,JanSlo10}. However, their work has only focused on a priori error estimates.
%As far as we know, there very few interpolation results for functions in $W^{1,p}$ by edge elements, moreover, tools such as collective compactness will also need to be adapted to the task. 
%On the other hand, for the straightforward semi-discretization considered in this paper, we have also included a proof of its well-posedness in Section \ref{app:WP-semidisc}.

The main result of this paper, an a posteriori error estimate, appears to be the first residual-based error estimate for the problem \eqref{basic_model}.
%In the work of Slodi\u{c}ka, error estimates are developed to control their implicit time-stepping, although this was based on ****. 
In the work of Sirois et al. \cite{SiRoDu09}, an explicit adaptive time-stepping scheme was handled by SUNDIALS \cite{SUNDIALS} which contains sophisticated but generic error control strategies. The error estimates presented in this paper are residual based and resemble the a posteriori error estimators one finds for linear or linearized problems \cite{Ver98}. In fact, our results differ from those of Verf\"{u}rth in our treatment of the non-conformity of the approximation and in our circumvention of linearization.
Error estimation for FE approximate solutions of the $p$-Laplacian is quite well-developed and in fact, we mention the 
important work on reliable and efficient error estimation using quasi-norms \cite{LiuYan01,CarKlo03,CarLiuYan06a,DieKre08,BelDieKre12}. 
In recent work of El Alaoui et al. \cite{ElAErnVoh11}, quasi-norm error estimates 
were obtained by re-interpreting the estimators in terms of flux corrections
satisfying specific properties. It appears that their approach could be adapted to the $p$-curl using the tools we presented here to handle non-conformity issues. The error estimate presented here also controls the error in an important quantity of interest, the AC loss over one cycle. We have included a proof of the well-posedness for the straightforward semi-discretization often considered within the engineering community. Numerical results are presented to assess the quality of the error estimators. These experiments confirm the reliability of the error estimators on a class of moving front solutions in $2$D.

The novelty of this paper is the treatment of the lack of conformity of the N\'ed\'elec element approximations. Inspired largely by the
work of Carstensen, Ju and Orlando on the issue \cite{CarHuOrl07}, we have found that coercive estimates are sufficient
to obtain reliable error estimates. This is in stark contrast to most nonlinear problems which require a linearization
of the operator in a neighborhood of the numerical solution. Given that the semi-discretization considered here
is also found in commercial codes, and that the a posteriori error estimators of this paper are straightforward to implement, 
it appears that this work could be of interest to the engineering community.

The a posteriori error estimate also includes an interesting non-conformity error originating 
from the geometric defect between the approximation
of the $C^{1,1}$ domain $\Omega$, required for the continuous problem, by the polyhedral domain $\Omega_h$
required for the finite element formulation. Even with the use of curved elements approximating the boundary, such a geometric defect could not be eliminated. This difficulty, which appears to be specific to nonlinear harmonic analysis in $L^p$ spaces \cite{JerKen95}, is carefully analyzed and reduced to a boundary term on $\partial \Omega_h$ mesuring 
our inability to represent the discrete solution over a $C^{1,1}$ domain. Morevover, the
techniques used required that the polyhedral mesh $\Omega_h$ be strictly included 
inside the domain $\Omega$ of the continuous problem. The paper includes
a novel construction of a family of uniformly regular polyhedral domains strictly inside a $C^{1,1}$ domain, based
on the work of Delfour \cite{Del00}, Oudot, Rineau and Yvinec \cite{Oudot2005}, and Talmor \cite{Tal97}.

The paper is organized as follows.
The second section presents a brief review of the functional analysis required for the a posteriori error estimation.
In Section \ref{app:WP-semidisc}, for the sake of completeness we include a demonstration of the well-posedness
of our semi-disretization of the $p$-curl problem.
The fourth section contains the proof of the main theorem. It is later extended in Section \ref{sec:ACloss} to the control of
the AC loss. The last section describes numerical results obtained when comparing the error estimator
to the exact error for a class of moving front solutions using the method of manufactured solutions and as well as convergence results for a backward Euler discretization.
In Appendix \ref{app:non-homomBC}, we have extended the a posteriori error estimator to the 
case of non-homogeneous tangential boundary conditions, exploiting again properties unique to
the $p$-Laplacian and the $p$-curl problem.

%**************************************************************
\section{Preliminaries}

This section reviews the main functional spaces over which the $p$-curl problem is examined and 
it states the strong and weak forms of the problem. The triangulation of the domain is carefully
discussed since it involves a non-conformity issue important to the $p$-curl problem. 
A brief review of the finite element discretization of the $p$-curl is given.
This section concludes with a detailed presentation of 
the two main technical tools, namely the Helmholtz-Weyl decomposition over $L^p$ spaces
and the quasi-interpolation operator of Sch{\"o}berl \cite{Sch07}.

Let $d=2,3$ and $\Omega$ be a bounded Lipschitz domain in $\mathbb{R}^d$. %, that is to say that at each point on the boundary, there exists a neighborhood of the form $V \times (a,b) \subset \mathbb{R}^{d-1} \times \mathbb{R}$ and a $C^1$ function $\psi :V \to \mathbb{R}$ with Lipschitz continuous derivatives such that $\Omega \cap V \times (a,b) = \{\bb x | \psi(\bb x) < 0\}.$ 
Let $k$ be a nonnegative integer and for $s\geq 0$ denote its integer part as $[s]$. 
Throughout, we denote $q$ as the H\"{o}lder conjugate exponent of $p$ satisfying $1=1/p+1/q$. 
Recall the following well-known Sobolev spaces \cite{Ada03}.
\begin{align*}
\W{k} &= \{v\in \Ldom{p}: D^{\alpha}
v \in \Ldom{p}^d, |\alpha| \leq k\}\\
\W{s} &= \left\{v\in \W{[s]}: \sum_{\substack{\alpha \in \mathbb{N}^d \\ |\alpha|=[s] }}\norm{\frac{D^{\alpha}v(x)-D^{\alpha}v(y)}{|x-y|^{d/p+s-[s]}}}_{L^p(\Omega\times\Omega)}^p<\infty, \right\} \\
W_0^{s,p}(\Omega) &= \{v\in \W{s}: \gamma_0(v)=0\} \\
\W{-s} &= \left(W_0^{s,q}(\Omega)\right)'
\end{align*}
For the $p$-curl problem, we will see later that minimal regularity suggests that we consider the following spaces with $\Omega$ being a bounded $C^{1,1}$ domain; see \cite{Mit04,AmrSel13} for more details on their properties and equivalent norms.
\begin{align*}
\Wcurl &= \{\bb v\in \Ldom{p}^d: \curl\bb v\in\Ldom{p}^d\}\\
\Wzcurl &= \{\bb v\in \Wcurl: \gamma_t(\bb v)=0\}\\
\Wdiv &= \{\bb v\in \Ldom{p}^d: \div\bb v\in\Ldom{p}\}\\
%\Wzdiv &= \{\bb v\in \Wdiv: \gamma_n(\bb v)=0\}\\
\Wdivz &= \{\bb v\in \Wdiv: \div\bb v=0\}
\end{align*}
\[
\X = \Wzcurl\cap\Wdivz
\]
Above, $\gamma_0:\W{1} \rightarrow W^{1-1/p,p}(\partial \Omega)$ is the continuous boundary trace operator and $\gamma_t: \Wcurl \rightarrow (W^{1-1/p,p}(\partial \Omega)^d)'$, $\gamma_n: \Wdiv \rightarrow W^{1-1/p,p}(\partial \Omega)'$ are the continuous tangential and normal trace operators satisfying \cite[Corollary B.57 and B.58]{ErnGue04}:
\begin{align}
\big( \gamma_t(\bb v),\gamma_0(\bb w) \big)_{\partial \Omega} &=\int_\Omega \bb v \cdot\curl\bb w \, dV -\int_\Omega \bb w \cdot \curl\bb v \, dV, &\label{tangTrace}\\ && \hspace{-20mm} \forall \bb v \in \Wcurl,  \bb w\in {W^{1,q}(\Omega)}^d, \nonumber \\
\big( \gamma_n(\bb v),\gamma_0(w) \big)_{\partial \Omega} &=\int_\Omega \bb v \cdot \grad w \, dV+\int_\Omega \div \bb v w \, dV, &\label{normalTrace}\\ && \hspace{-20mm} \forall \bb v \in \Wdiv, w\in {W^{1,q}(\Omega)}. \nonumber
\end{align} 
For sufficiently smooth functions $\bb v$ and $w$, these trace operators are simply $\gamma_0(w)=w|_{\partial \Omega}$, $\gamma_t(\bb v)=\bb n \times \bb v|_{\partial \Omega}$ 
and $\gamma_n(\bb v)=\bb n \cdot \bb v|_{\partial \Omega}$. Later, we will need the stability bound below \cite{Ada03}.

\begin{lemma}  \label{traceThm}
Let $\Omega$ be a bounded domain with a Lipschitz boundary. If $\boldsymbol v\in W^{1,p}(\Omega)$, then the boundary trace operator $\gamma_0:W^{1,p}(\Omega)\rightarrow L^p(\partial \Omega)$ is a continuous linear operator, i.e. there exist a constant $C>0$ such that,
\begin{align}
\norm{\gamma_0(\boldsymbol v)}_{L^p(\partial \Omega)} \leq C\norm{\boldsymbol v}_{W^{1,p}(\Omega)}. \label{traceBound}
\end{align}
\end{lemma}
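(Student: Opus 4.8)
The plan is to prove the bound first for functions smooth up to the boundary and then extend to all of $W^{1,p}(K)$ by density, since for a bounded Lipschitz domain $K$ the space $C^\infty(\bar K)$ is dense in $W^{1,p}(K)$ and $\gamma_0$ is, by construction, the unique continuous linear extension of the restriction map $v \mapsto v|_{\partial K}$. So I would first reduce to showing $\norm{v}_{L^p(\partial K)} \leq C\norm{v}_{W^{1,p}(K)}$ for every $v \in C^1(\bar K)$, with $C$ depending only on $K$ and $p$.

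For the smooth case I would localize. Cover $\partial K$ by finitely many open sets $U_1,\dots,U_N$ in each of which, after a rigid rotation, $K \cap U_j$ lies below a Lipschitz graph $x_d = \psi_j(x')$; the a.e.-defined outward unit normal is then $\bb{n} = (\nabla\psi_j,-1)/\sqrt{1+|\nabla\psi_j|^2}$, so if $L_j$ is the Lipschitz constant of $\psi_j$ the transversality estimate $\bb{e}_d\cdot\bb{n} = 1/\sqrt{1+|\nabla\psi_j|^2} \geq 1/\sqrt{1+L_j^2} =: c_j > 0$ holds. Fix a partition of unity $\{\chi_j\}$ subordinate to $\{U_j\}$. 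Applying the divergence theorem to the field $\chi_j |v|^p\,\bb{e}_d$ gives
\begin{equation*}
\int_{\partial K} \chi_j\,|v|^p\,(\bb{e}_d\cdot\bb{n})\,dS
 = \int_K \partial_d\!\big(\chi_j |v|^p\big)\,dV
 = \int_K \Big( (\partial_d\chi_j)\,|v|^p + p\,\chi_j\,|v|^{p-2}v\,\partial_d v \Big)\,dV .
\end{equation*}
On the left $\bb{e}_d\cdot\bb{n}\geq c_j$ on $\operatorname{supp}\chi_j\cap\partial K$; on the right I would bound the cross term by Young's inequality with conjugate exponents $q$ and $p$, namely $|v|^{p-1}|\partial_d v| \leq \tfrac{p-1}{p}|v|^p + \tfrac1p|\partial_d v|^p$. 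Dividing by $c_j$ yields $\int_{\partial K}\chi_j|v|^p\,dS \leq C_j\norm{v}_{W^{1,p}(K)}^p$ with $C_j$ depending only on $\norm{\chi_j}_{C^1}$, $c_j$ and $p$.

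Summing over $j$ and using $\sum_j\chi_j \equiv 1$ on $\partial K$ gives $\norm{v}_{L^p(\partial K)}^p \leq C\norm{v}_{W^{1,p}(K)}^p$ for all $v\in C^1(\bar K)$. For general $v\in W^{1,p}(K)$, take $v_m\in C^\infty(\bar K)$ with $v_m\to v$ in $W^{1,p}(K)$; then $\{v_m|_{\partial K}\}$ is Cauchy in $L^p(\partial K)$, its limit is by definition $\gamma_0(v)$, and passing to the limit in the inequality for each $v_m$ gives $\norm{\gamma_0(v)}_{L^p(\partial K)} \leq C^{1/p}\norm{v}_{W^{1,p}(K)}$, as claimed.

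I expect the only delicate point to be the localization step: making the choice of a transversal direction and the positive constant $c_j$ rigorous when $\bb{n}$ is merely $L^\infty$ and defined only a.e. This is exactly what the Lipschitz graph structure supplies — the explicit formula for $\bb{n}$ above, together with the fact that on $\partial K\cap U_j$ the surface measure is comparable to $dx'$, which is what legitimizes rewriting the boundary integral over the graph. Everything else (the divergence identity, Young's inequality, the finite summation) is uniform in $p$, so no additional difficulty arises from the range $p\in[20,100]$ relevant to the application. Alternatively, one may simply invoke the classical trace theorem for Lipschitz domains \cite{Ada03}.
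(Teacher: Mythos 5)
Your argument is correct, and it is worth noting that the paper does not actually prove this lemma: it is stated as a standard fact with a reference to \cite{Ada03}, so the ``paper's proof'' is precisely the fallback you mention in your last sentence. What you have written out is the classical proof that the citation points to: reduce to $C^1(\bar K)$ by density (valid on Lipschitz domains via the segment condition), localize to Lipschitz graphs, and integrate $\partial_d(\chi_j|v|^p)$ over $K$ so that the transversality of $\bb e_d$ to the graph produces the boundary term with a uniformly positive weight, the volume terms being controlled by Young's inequality. The only blemish is a sign slip in the normal: if $K\cap U_j=\{x_d<\psi_j(x')\}$ the outward normal is $(-\nabla\psi_j,1)/\sqrt{1+|\nabla\psi_j|^2}$, so $\bb e_d\cdot\bb n=1/\sqrt{1+|\nabla\psi_j|^2}\geq c_j>0$ as you use, whereas the formula you wrote has the opposite orientation; this is cosmetic and does not affect the estimate. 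Your proof is thus a self-contained and more informative substitute for the citation; the citation buys brevity, while your version makes explicit that the constant depends only on the Lipschitz character of the boundary (the covering, the $L_j$, and the partition of unity) and on $p$, which is exactly the dependence needed for the scaled version \eqref{scaledtraceThm} used later in the paper.
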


As is customary for $L^2$ spaces, we write $W^{k,2}(\Omega)$ as $H^k(\Omega)$ and similarly we write $W^{2}(\text{div};\Omega)$ and $W^{2}(\text{curl};\Omega)$ as $H(\text{div};\Omega)$ and $H(\text{curl};\Omega)$, respectively.

%for its associated curl and div spaces. 
%We note these $L^p$ spaces and their related compactness results were recently shown in \cite{AmrSel13}.\\
If $\bb u\in\Ldom{q}^d, \bb v\in\Ldom{p}^d$, we denote the pairing
\begin{align*}
\rpair{\bb u,\bb v} :=\int_\Omega \bb u\cdot\bb v \,dV,
\end{align*} 
and define the nonlinear operator $\mathcal{P}:\Wcurl\rightarrow \Wcurl'$,
\begin{align}
\pair{\mathcal{P}(\bb u),\bb v} :=\rpair{\rho(\curl \bb u)\curl \bb u,\curl \bb v}.
\end{align} 
Indeed, by Holder's inequality, these pairings are well-defined since,
\begin{align*}
\rpair{\bb u,\bb v} &\leq \norm{\bb u}_{\Ldom{q}}\norm{\bb v}_{\Ldom{p}}, \\
\pair{\mathcal{P}(\bb u),\bb v}  &\leq \alpha \norm{\curl\bb u}_{\Ldom{p}}^{p/q}\norm{\curl \bb v}_{\Ldom{p}}.
\end{align*}

Over the time interval $I=[0,T]$, the $p$-curl problem arising from applied superconductivity is the following nonlinear evolutionary equation:
%\begin{align*}
% \pdt\bb u+\curl \left[ \rho(\curl \bb u)\curl \bb u\right]&=\bb f,  \hspace{4mm} \mbox{ in } I\times\Omega, \\
%\nabla \cdot \bb u &= 0,  \hspace{4mm}\mbox{ in } I\times\Omega,  \\
%\bb u(0,\cdot)&=\bb u_0(\cdot),  \hspace{4mm}\mbox{ in } \Omega, \\
%\bb n\times \bb u&=0,  \hspace{4mm}\mbox{ on } I\times\partial \Omega,
%\end{align*} 
\begin{equation}  \label{pcurl-complete}
\begin{array}{r@{\hspace{2pt}}c@{\hspace{2pt}}ll}
       \pdt\bb u+\curl \left[ \rho(\curl \bb u)\curl \bb u\right]&=&\bb f,  &\mbox{ in } I\times\Omega, \\
       \nabla \cdot \bb u &= &0,  & \mbox{ in } I\times\Omega,  \\
      \bb u(0,\cdot)&=&\bb u_0(\cdot),  & \mbox{ in } \Omega, \\
      \bb n\times \bb u&=&0,  &\mbox{ on } I\times\partial \Omega,
\end{array}
\end{equation}
where $p\geq2$, $\rho$ is the nonlinear resistivity modeled by an isotropic power law $\rho(\curl \bb u)=\param|\curl \bb u|^{p-2}$ and $\param=E_0/(\mu J_c^{p-1})> 0$ is a material dependent constant. Moreover, it is assumed that $\div \bb u_0=0$ and $\div \bb f=0$ for all $t\in I$ in a manner to be made precise later.

The weak formulation of the $p$-curl problem is: \\ \\
\emph{
Given $\Omega$ a bounded $C^{1,1}$ domain, $\bb u_0\in \Wdivz$ and $\bb f \in \L{2}(I;W^q(\text{div}^0;\Omega))$, find $\bb u \in \L{2}(I;\X)\cap H^1(I;\Ldom{q})$ satisfying  $\bb u(0,\cdot)=\bb u_0(\cdot)$ and }
\begin{align}
\rpair{\pdt\bb u,\bb v}+\pair{\mathcal{P}(\bb u),\bb v}=\rpair{\bb f,\bb v}, \quad \forall \bb v \in L^2(I;\X) . \label{WF}
\end{align}\\ 
The well-posedness of the weak problem was established in the work of Yin et al. \cite{Yin02,YinLiZou02}. The stability
of the solution is characterized by two inequalities from Lemma 3.2 of \cite{YinLiZou02}, one of which is 
given by
\begin{align}   \label{ineq:stability-continuous}
  \int_0^T \norm{\partial_t \bb u(s)}_{L^2(\Omega)}^2 ds
          & \, +\sup_{t\in [0,T]} \norm{\nabla\times \bb u(t)}^p_{L^p(\Omega)}  \\
          \leq & \, C \norm{\nabla\times \bb u_{0}}_{L^p(\Omega)}^p 
                  + C \int_0^T \norm{\bb f(s)}_{L^2(\Omega)}^2ds \,.  \nonumber
\end{align}
We demonstrate a similar bound for our approximate solution in Theorem \ref{dwfWP}.

%************************************************************
\subsection{Approximating a $C^{1,1}$ domain}

Being restricted to a $C^{1,1}$ domain $\Omega$, in part due to the well-posedness of the $p$-curl problem, 
we observe that the domain of the polyhedral mesh $\Omega_h$ cannot be equal to $\Omega$, and therefore that the solution $\bb u$ to \eqref{WF} and any finite element approximation $\bb u_h$ cannot be defined over the same domain. When comparing $\bb u$ and $\bb u_h$, this introduces a geometric non-conformity that requires us to construct a polyhedral mesh $\Omega_h$ that approximates the $C^{1,1}$ domain $\Omega$ sufficiently well. The construction of the mesh will exploit the fact that $C^{1,1}$ domains are (nearly) those with the weakest regularity for which tubular neighborhoods can be defined. 
For the sake of simplicity, the description will be given only in $\mathbb{R}^3$ although the 
modifications to $\mathbb{R}^2$ should be obvious.

Let $\{\Tau_h\}_{h>0}$ be a collection of shape-regular triangularization of $\Omega$ where $\Tau_h:=\{K\subset \Omega: K \text{ a tetrahedron in } \mathbb{R}^3\}$ with $h$ being the largest diameter over all $K\in\Tau_h$. Denote $\displaystyle \Omega_h:=\bigcup_{K\in \Tau_h} \overline{K}$ as the polyhedral mesh with the obvious constraints that are required to ensure that the set of faces $\mathcal{F}(\Omega_h)$ and edges $\mathcal{E}(\Omega_h)$ of $\Omega_h$ are well-defined. Also for each $K \in \Tau_h$, denote $h_K$ as the diameter of $K$ and $\rho_K$ as the diameter of the largest inscribed sphere within $K$. By definition of shape-regularity of $\{\Tau_h\}_{h>0}$, $\displaystyle \max_{K\in \Tau_h}\frac{h_K}{\rho_K} \leq \sigma$ for all $h>0$. Moreover, for each face $F$ on $\partial K$, we also denote $h_F$ as the diameter of $F$ and $\rho_F$ to be the diameter of the largest inscribed circle within $F$.  The following lemma is obtained by combining the trace theorem of Lemma \ref{traceThm} with a standard scaling argument. %For brevity, the proof is presented in Appendix \ref{app:traceProof}.
\begin{lemma} \label{discTraceIneq}
Let $K\in \Tau_h$ and $F$ be any face on $\partial K$. If $v\in W^{1,p}(K)$, then there exists a constant $C>0$ independent of $K$ and $v$ such that, 
\begin{align}
h_F^{1-p}\norm{\gamma_0(v)}_{L^p(F)}^p \leq C \big(h_F^{-p}\norm{v}_{L^p(K)}^p + \norm{\nabla v}_{L^p(K)}^p \big).  \label{scaledtraceThm}
\end{align} 
\end{lemma} 

Due to the geometric non-conformity, we will further be interested in a special class of triangulation of $\Omega$. For a bounded domain $\Omega \subset \mathbb{R}^3$, we define an {\it interior mesh} $\Tau_h$ to be a triangulation
of the domain $\Omega$ for which the union of all tetrahedron $\Omega_h$ is strictly contained in $\Omega$.  
If $\Omega$ is a convex $C^{1,1}$ domain and the vertices of $\partial \Omega_h$ lies 
within $\Omega$, then clearly $\Omega_h$ is an interior mesh of $\Omega$. 
For a fixed nonconvex bounded $C^{1,1}$ domain $\Omega$, the existence of a sequence
of triangulations for which the volume of the defect $\Omega\setminus \Omega_h$ vanishes uniformly, in some sense, is far from obvious. We begin with a fundamental result of Delfour \cite{Del00}, citing Lemma 2.1 from \cite{Del09}.

%******************************************************
\begin{theorem} \label{thm:tubular-nhood}
Let $\Omega$ be a bounded domain with a non-empty $C^{1,1}$ boundary 
$\partial \Omega$.  There exists a number $d = d(\Omega) \in \mathbb{R}^+$, an open
neighborhood $U_{d}$ of $\partial \Omega$, and a bi-Lipschitzian map 
\begin{align*}
    \Gamma : 
    \partial \Omega \times [-d,d]  \longrightarrow \overline{U}_{d} \, ,
\end{align*}
satisfying
\begin{itemize}
\item[i)] the map $\Gamma(\cdot,0)$ is the identity over $\partial \Omega$;
\item[ii)] for each $s \in [-d,d] $ the image of the map 
     $\Gamma(\cdot, s) : \partial \Omega \longrightarrow \overline{U}_{d}$ 
      is a $C^{1,1}$ hypersurface;  
\item[iii)] for each fixed $x \in \partial \Omega$, the derivative $d\Gamma/ds (x,0) $
             is the exterior normal to the boundary at $x$;
\item[iv)] for all $(x,s) \in \partial \Omega \times [-d,0)$, the image
             $\Gamma(x,s) $ is inside $\Omega$.                           
\end{itemize}
\end{theorem}

%Besides providing us with a simple way to measure distance to the boundary,
%this result ensures that the level set $\Gamma(\partial \Omega,s)$ is also
%a $C^{1,1}$ surface.  
For domains with weak $C^{1,1}$ regularity, there exists a triangulation algorithm 
developed by Oudot, Rineau and 
Yvinec \cite{Oudot2005} which constructs  a mesh  arbitrarily close to the boundary.
The algorithm only requires an oracle that (i) determines if a point is inside the
domain, and (ii) computes the intersection point between the boundary and a segment 
in generic position. This algorithm has been implemented in CGAL \cite{CGAL}
and distinguishes itself from conventional algorithms that are usually restricted to polyhedral domains. 
We present here a form of their result specifically adapted to our situation.

%******************************************************
\begin{theorem} \label{thm:triangulation}
Let $\Omega$ be a bounded domain with a non-empty $C^{1,1}$ boundary $\partial \Omega$.
There exists a positive constant $\sigma_m$ and a Lipschitz sizing field  on $\partial \Omega$,
$$
        r_{\partial \Omega} : \partial \Omega \longrightarrow \mathbb{R} \, ,
$$
such that for every $\delta \in (0,d(\Omega))$, there exists 
a triangulation $\Tau_h$ of an interior mesh $\Omega_h $ of $ \Omega$  satisfying
\begin{itemize}
\item[i)] $\partial \Omega_h \subset \Gamma(\partial \Omega, [-\delta,0) )$;      
\item[ii)] for all faces $F$ along the boundary of the mesh, 
             \begin{equation}  \label{ineq:constraint_boundary}
                \frac{h_F}{\delta} \leq r_{\partial \Omega}(x) \, , \quad \forall \, x \in F ;
             \end{equation}                                                    
\item[iii)] the triangulation is shape-regular, that is
               $$
                       \frac{h_K}{\rho_K} \leq \sigma_m \, , \quad \forall \, K \in \Tau_h .
               $$                  
\end{itemize}
\end{theorem}

\begin{proof} 
For every positive value of $\delta $ less than $d(\Omega)$, define the $C^{1,1}$ domain
$$
        \Omega^{(\delta)}:= \Omega \setminus \Gamma\big(\partial \Omega,[-\delta/2,0] \big) \, .
$$        
We will now construct a triangulation of $\Omega^{(\delta)}$ that  guarantees that the union of the 
tetrahedrons of the mesh $\Omega_h$ satisfy 
\begin{equation}     \label{boundary_interior_mesh}
          \partial \Omega_h \subset \Gamma\big(\partial \Omega,[-3\delta/4,-\delta/4] \big) \, .
\end{equation}
The algorithm of Oudot, Rineau and Yvinec allows us to construct a triangulation of a $C^{1,1}$ domain,
but not necessarily produce an interior mesh. The iterative algorithm begins 
by choosing a value for the bound $B$ on the radius-edge ratio, that is the ratio
\begin{equation}  \label{radius-edge-ratio}
         \frac{h_K}{\gamma_K} \leq B \, , \qquad \forall K \in \Tau_h \, ,
\end{equation}
where $\gamma_K$ is the length of the shortest edge of $K$. Points
are then randomly selected inside $\Omega^{(\delta)}$ and on the 
boundary $\partial \Omega^{(\delta)}$. Tetrahedrons and
faces on the boundary are selectively refined by inserting the circumcenter and connecting
the vertices to the circumcenter until both \eqref{ineq:constraint_boundary} 
and \eqref{radius-edge-ratio} are satisfied.

We will show that if a face $F$ on the boundary of the mesh satisfies a constraint
$ h_F \leq C \delta$, then 
\begin{equation} \label{mesh_face_condition}
       F \subset \Gamma(\partial \Omega, [-3\delta/4,-\delta/4]) \, .
\end{equation}
Choose a face $F$ belonging to the boundary of $\Omega_h$, 
suppose one of its three vertices is $x_1 \in \partial \Omega^{(\delta)}$. 
For a point $x \in F$, define $D = \| x_1 - x \|$ and the smooth function 
$g(\eta) = (1-\eta/D)x_1 + \eta /D x $ describing, for arc length $\eta \in [0,D]$,
the straight line segment connecting $x_1$ to $x$.
If $P(x,s) = s$ is the projection onto the second variable, then
the Lipschitz continuity of the inverse of $\Gamma$ implies that there exists a
constant $M$ such that
$$
  \Big| P \circ \Gamma^{-1} \circ g( D ) 
          - P \circ \Gamma^{-1} \circ g( 0 ) \Big|
          \leq M D \leq M  h_F \, .
$$
Therefore, if all the vertices belong to $\partial \Omega^{(\delta)}$ and if the face $F$ satisfies 
\begin{equation}  \label{relation_h_delta}
                M h_F \leq M h \leq \frac{\delta}{4} \, \quad \Longleftrightarrow \quad h \leq  C \delta \, ,
\end{equation}
for some fixed $C$, then the condition \eqref{mesh_face_condition} holds
and the mesh $\Omega_h$ is strictly inside $\Omega$.
Moreover, the constant $C$ depends only on the Lipschitz constant of the
boundary $\partial \Omega$, and not on $\delta$. We remark that these
observations allow us to assign to each vertex the value $(4M)^{-1}$, which
depends only on $\partial \Omega^{(\delta)}$, and then construct 
the sizing field as a piecewise linear interpolant of these values.
The inverse of $\Gamma$ then allows the sizing field over $\partial \Omega^{(\delta)}$ 
to be defined over $\partial \Omega$.

%In particular, the restriction of $\bb{v}^{\delta}$ to $\Omega_h$, which contains the support
%$K^{(\delta)}$, will trivially extend to an element of $W^p_0(\operatorname{curl};\Omega)$.

Finally, we address the shape-regularity of the mesh. In fact, the algorithm 
by Oudot et al. only produces meshes with bounded radius-edge ratios \eqref{radius-edge-ratio}
and these meshes may contain so-called {\it slivers}, that is tetrahedrons
possessing one vertex close to the plane of the three others vertices
yet with angles bounded from below. There exists very
efficient algorithms to remove such slivers, but in fact the Sliver Theorem 
of Talmor states that if a mesh satisfies the radius-edge ratio condition, then
there exists a topologically equivalent mesh that is shape-regular \cite{Tal97}.
From a mathematical perspective, the shape-regular condition $\sigma_m$ 
therefore follows from the choice of the constraint $B$.
\end{proof}

The main motivation for introducing an interior mesh is the following simple extension result.

%We now present two results concerning extensions of functions
%in $W^p(\text{curl}; \Omega_h)$ into $W^p_0(\text{curl}; \Omega)$, both
%of which are similar to extensions described by Sch{\"o}berl in \cite{Sch07}.
%The first result is quite simple and states that for functions 
%$\bb{u}_h \in W^p_0(\text{curl}; \Omega)$, the trivial extension by zero defines an element
%of $W^p_0(\text{curl}; \Omega)$.

%******************
\begin{lemma} \label{lem:exten}
Let $\Omega_h$ be an interior mesh of $\Omega$. 
For each $\bb v\in W^p_0(\text{curl};\Omega_h)$, its trivial extension by zero defined by
\begin{equation*}
\tilde{\bb v}(x) := 
\begin{cases}
\bb v, & x\in \Omega_h \, , \\
\bb 0, &x\in \Omega\setminus\Omega_h \,,
\end{cases} 
\end{equation*}belongs to $W^p_0(\text{curl}; \Omega)$.
\end{lemma}

\begin{proof} Clearly, $\tilde{\bb v}\in L^p(\Omega)$. Since $\left.\tilde{\bb v}\right|_{\Omega_h} = \bb v \in W^p_0(\text{curl};\Omega_h)$ and $\left. \tilde{\bb v}\right|_{\Omega\setminus\Omega_h} = \bb 0 \in W^p_0(\text{curl};\Omega\setminus\Omega_h)$, the tangential jump $[[\gamma_t(\tilde{\bb v})]]_{\partial \Omega_h}= \bb 0$. So, it follows from \eqref{tangTrace} that, $\tilde{\bb v}\in W^p(\text{curl}; \Omega)$ and clearly $\gamma_t(\tilde{\bb v})|_{\partial \Omega}=\bb 0$.
\end{proof}

\subsection{Semi-discretization of $p$-curl problem by N\'{e}d\'{e}lec finite elements}

In $\mathbb{R}^3$, the $k$-th order N\'{e}d\'{e}lec finite element space of the first kind \cite{Ned80} and with zero tangential trace can be defined as,
\begin{align}
\Xh^{(k)}&:=\{\bb v\in W^p(\text{curl};\Omega_h):\bb v|_K=\bb a (\bb x)+\bb b (\bb x)\times \bb x, \bb a,\bb b \in[\mathbb{P}_{k-1}]^3, K\in \Tau_h\}, \\
\Xhz^{(k)}&:=\Xh^{(k)}\cap W_0^p(\text{curl};\Omega_h),
\end{align} 
where $(\mathbb{P}_k)^3$ is the space of vector fields with polynomial components of at most degree $k$. % For fixed $k$, this is the space of  $k$-th order elements of the first family of N\'ed\'elec .
%and in $\mathbb{R}^2$, $\bb v|_K$ takes the form $\bb a+bR(\bb x)$ where $\bb a\in\mathbb{R}^2, b\in \mathbb{R}$ and $R$ is rotation matrix by $\pi/2$. \\%
Recall that the finite element space $\Xhz^{(k)}$ is uniquely determined by identifying the degrees of freedom of the surface integral along faces and edges between any two neighboring elements. Since an element-wise $W^p(\text{curl};K)$ defined function that is continuous tangentially along faces and edges is a global $\Wcurlh$ function, $\Xhz^{(k)}\subset \Wzcurlh$. Moreover $\Xhz^{(1)}$ is known to be locally divergence-free, i.e. $\div\bb v|_K=0$ for $\bb v\in\Xhz^{(1)}$, and thus it is an element-wise $W^p(\text{div}^0;K)$ defined function. Unfortunately, higher order elements
will not be in $W^p(\text{div}^0;K)$. In any case, $\Xhz^{(k)}$ can be discontinuous in the normal direction to faces and edges and hence in general is not a global $W^p(\text{div};\Omega_h)$ function. In particular, $\Xhz^{(k)}\not\subset \X$.

%%%%%%%%%%%%

This leads us to the non-conforming semi-discrete weak formulation of the $p$-curl problem:\\ \\
\emph{
Given $\bb u_{0,h}\in \Xhz^{(k)}$ and $\bb f \in C(I;W^q(\text{div}^0;\Omega))$, find $\bb u_h\in C^1(I;\Xhz^{(k)})$
%$\bb u_h \in \L{2}(I;\Xhz^{(k)})\cap H^1(I;\Ldom{q})$ 
satisfying $\bb u_h(0,\cdot)=\bb u_{0,h}(\cdot)$ and}
\begin{align}
\rpairh{\pdt\bb {u_h},\bb v_h}+\pairh{\mathcal{P}(\bb u_h),\bb v_h}=\rpairh{\bb f,\bb v_h} , \quad \forall \bb v_h \in \Xhz^{(k)} .\label{DWF}
\end{align}
Due to the non-conformity, well-posedness of the semi-discretization does not necessarily follow from the well-posedness of the weak formulation. By a local existence argument and a priori estimate, we show that the semi-discretization is well-posed in Section \ref{app:WP-semidisc}. Note that, while the weak formulation only requires $\bb f$ to be $L^2$ in $t$, we need $\bb f$ to be continuous in $t$ in order apply Picard's local existence theorem.

%The existence of the semi-discrete solution to the above problem has yet to be established in the generality stated above, although 
%within the larger $L^2$ setting this has been considered by Jan\'{i}kov\'{a} and Slodi\u{c}ka \cite{JanSlo08}. 
%This paper will assume throughout that such a solution exists and, in Section \ref{sec:ACloss}, we will also require this
%solution to be stable and unique. 

%By \cite{YinLiZou02}, the weak formulation \eqref{WF} has an unique solution for $\Omega$ with $C^{1,1}$ boundary. 
%Moreover, it is shown that $\X$ has the equivalent semi-norm $\norm{\bb u}_{\X}\equiv\norm{\curl \bb u}_{\Ldom{p}}$ and 
%there is a stability constant $M$ so that $\sup_{t\in I}\norm{\bb u(\cdot,t)}_{\X}\leq M$ where $M$ depends only on appropriate 
%norms of $\bb u_0$ and $\bb f$. %In the appendix, we also show the non-conforming semi-discrete weak formulation \eqref{DWF} is well-posed (TODO).

%**********************************************
%\section{Helmholtz-Weyl Decomposition}
\subsection{Helmholtz-Weyl decomposition of $W^p_0(\text{curl};\Omega)$ functions}

We now proceed with a rather detailed review of the Helmholtz-Weyl decomposition for $\L{p}$ spaces. 
This is needed to address the non-conformity in a manner similar to the work of \cite{CarHuOrl07}.
The most technical aspects concerning the $p$-curl problem turn out to be related to this decomposition, not
only because of the Banach nature of the $L^p$ spaces concerned, but also because it imposes strict limits on the
regularity of the boundary.

Define $L_\sigma^p(\Omega):=$ closure of $\{\bb v \in C_0^\infty(\Omega)^d: \div \bb v=0\}$ with respect to $\L{p}\text{ norm}$. %Unless explicitly stated, below we will assume that the domain $\Omega$ has a $C^{1,1}$ boundary. %Recall that we assume throughout that $\Omega$ is a domain with a $C^{1,1}$ boundary.
A standard formulation of the decomposition is the following. \\ \\
\emph{
There exists a positive constant $C=C(\Omega,p,d)$ such that for any $\bb v\in \Ldom{p}^d$, there exists $\phi \in \W{1}/\mathbb{R}$ and $\bb z \in L_\sigma^p(\Omega)$ 
for which $\bb v =\bb z+\grad\phi$ and 
\begin{align}
\norm{\bb z}_{\L{p}(\Omega)} + \norm{\grad \phi}_{\L{p}(\Omega)}\leq C\norm{\bb v}_{\L{p}(\Omega)}. \label{HWdecomp}
\end{align}}
\\
When the vector field  has zero boundary trace, then the Helmholtz-Weyl decomposition is as follows.\\ \\
\emph{
There exists a positive constant $C=C(\Omega,p,d)$ such that for any $\bb v\in \Ldom{p}^d$, there exists $\phi \in \Wz$ and $\bb z \in \Wdivz$ for which 
$\bb v =\bb z+\grad\phi$ and
\begin{align}
\norm{\bb z}_{\L{p}(\Omega)} + \norm{\grad \phi}_{\L{p}(\Omega)}\leq C\norm{\bb v}_{\L{p}(\Omega)}. \label{HWzdecomp}
\end{align}}
\\
While the decomposition when $p=2$ can be studied using tools no more complicated than the Lax-Milgram theorem, the case for general $p$ is much more subtle.
It has been observed (for example \cite[Lemma III 1.2]{Gal11}) that the existence of the Helmholtz-Weyl decomposition of \eqref{HWdecomp} 
is equivalent to the solvability of the following Neumann problem over $\Omega$.\\ \\
\emph{Given $\bb v\in \Ldom{p}^d$, find $\phi \in \W{1}/\mathbb{R}$ such that for all $\psi \in W^{1,q}(\Omega)/\mathbb{R}$,
\begin{align*}
\rpair{\grad\phi,\grad\psi}=\rpair{\bb v,\grad \psi}.
\end{align*}}Similarly, the existence of Helmholtz-Weyl decomposition of \eqref{HWzdecomp} is equivalent to the solvability of the Dirichlet problem below.\\ \\
\emph{Given $\bb v\in \Ldom{p}^d$, find $\phi \in \Wz$ such that for all $\psi \in W_0^{1,q}(\Omega)$,
\begin{align*}
\rpair{\grad\phi,\grad\psi}=\rpair{\bb v,\grad \psi}.
\end{align*}}In particular, if $\Omega\subset\mathbb{R}^d$ is a bounded Lipschitz domain then for some $\epsilon(\Omega)>0$ depending on the Lipschitz constant of $\Omega$, it was shown in \cite{FabMenMit98} that the above Neumann problem has a solution in a sharp region near $p\in(3/2-\epsilon,3+\epsilon)$. Similarly, \cite{JerKen95} showed that the above Dirichlet problem has a solution in a sharp region near $p\in(2/(1+\epsilon),2/(1-\epsilon))$. This implies the Helmholtz-Weyl decomposition does not hold in general for bounded Lipschitz domains, which is unfortunate since such domains do arise in engineering applications of superconductors. Thus, we are forced to restrict to bounded $C^{1,1}$ domains, which are consistent with the regularity of the boundary required for the well-posedness of the $p$-curl problem given by \cite{YinLiZou02}.

The Helmholtz-Weyl decomposition for $L^2$ was first demonstrated by \cite{Wey40} and for $L^p$ by \cite{FujMor77} for smooth bounded domains. For $1<p<\infty$, to the best of our knowledge, the weakest regularity requirement for the Helmholtz-Weyl decomposition to hold are bounded $C^1$ domains \cite{SimSoh92,SimSoh96} and more recently for bounded convex Lipschitz domains \cite{GenShe10}. 
%****************
\begin{theorem}\cite[Theorem II.1.1]{SimSoh96} 
Let $\Omega\subset \mathbb{R}^d$ be bounded $C^1$ domain and let $1<p<\infty$. Then the Helmholtz-Weyl decomposition \eqref{HWzdecomp} holds. \label{C1decomp}
\end{theorem}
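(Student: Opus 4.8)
Theorem~\ref{C1decomp} is a classical result of Simader and Sohr, so the plan is to recall the structure of its proof, which also makes transparent why the $C^1$ hypothesis is the sharp one. By the equivalence recalled above, it suffices to solve the weak Dirichlet problem --- given $\bb v\in\Ldom{p}^d$, find $\phi\in\Wz$ with $\rpair{\grad\phi,\grad\psi}=\rpair{\bb v,\grad\psi}$ for every $\psi\in W_0^{1,q}(\Omega)$ --- together with the a priori bound $\norm{\grad\phi}_{\L{p}(\Omega)}\le C\norm{\bb v}_{\L{p}(\Omega)}$. The crux is the pair of Babu\v{s}ka--Ne\v{c}as (inf-sup) estimates
\begin{align*}
\norm{\grad u}_{\L{r}(\Omega)} \le C \sup_{0\ne\psi\in W_0^{1,r'}(\Omega)} \frac{\big|\rpair{\grad u,\grad\psi}\big|}{\norm{\grad\psi}_{L^{r'}(\Omega)}}, \qquad \forall\, u\in W_0^{1,r}(\Omega),
\end{align*}
one for $r=p$ and one for $r=q$, with $r'$ the conjugate exponent. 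Granting both, the Babu\v{s}ka--Ne\v{c}as theorem produces, for every $F\in\W{-1}$, a unique $\phi\in\Wz$ with $\rpair{\grad\phi,\grad\psi}=F(\psi)$ and $\norm{\grad\phi}_{\L{p}(\Omega)}\le C\norm{F}_{\W{-1}}$; applying this to $F(\psi)=\rpair{\bb v,\grad\psi}$, for which $\norm{F}_{\W{-1}}\le\norm{\bb v}_{\L{p}(\Omega)}$, is precisely the assertion.

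For $p=2$ both inf-sup estimates reduce to the Poincar\'e inequality and Lax--Milgram. For general $p$ I would argue by localization: fix a finite open cover $\{U_j\}$ of $\overline\Omega$ in which every $U_j$ meeting $\partial\Omega$ is the domain of a $C^1$ boundary chart, together with a subordinate partition of unity $\{\chi_j\}$, and write $u=\sum_j\chi_j u$. This reduces the global estimate to an interior estimate --- which is just the $L^p$-boundedness of the Riesz transforms (Calder\'on--Zygmund) --- and a boundary estimate on each chart. On a boundary chart I would flatten $\partial\Omega$ by the $C^1$ diffeomorphism; the Laplacian becomes a divergence-form operator $\div(A\grad\,\cdot\,)$ whose principal coefficient $A$ is \emph{continuous} and equals the identity along the flattened boundary face. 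Shrinking the chart so that $\norm{A-A(0)}_{L^\infty}$ is as small as desired, one freezes the coefficients, invokes the constant-coefficient half-space estimate (obtained from the whole-space Riesz-transform bound by odd reflection across the flat face), and absorbs the frozen-coefficient error by a Neumann series. The commutator terms created by the cutoffs $\chi_j$ pair $u$ and $\grad u$ against lower-order factors; since $\Wz\hookrightarrow\Ldom{p}$ is compact, they are removed either by absorption or by the usual compactness-contradiction argument.

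The main obstacle is precisely this boundary estimate at borderline regularity. The continuity of $A$ --- which is exactly what the $C^1$ hypothesis buys, and what fails for a merely Lipschitz chart --- places the frozen-coefficient operator in the range where the singular-integral theory for the Laplacian holds for all $1<p<\infty$ and tolerates an arbitrarily small perturbation, so the localization closes. For a Lipschitz domain the flattened coefficients are only bounded measurable, the perturbation need not be small, and the estimate genuinely fails once $p$ leaves a neighborhood of $2$, as the counterexamples of Jerison and Kenig \cite{JerKen95} show. An alternative, equally valid route is the method of layer potentials: on a $C^1$ domain the double-layer boundary operator is compact on $\Lbry{p}$ for every $1<p<\infty$ (Fabes--Jodeit--Rivi\`ere), so the boundary integral equation is of Fredholm type and uniquely solvable, yielding the Dirichlet solution and its bound directly. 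In every case, once $\phi$ is in hand one sets $\bb z:=\bb v-\grad\phi\in\Ldom{p}^d$: the weak equation reads $\rpair{\bb z,\grad\psi}=0$ for all $\psi\in W_0^{1,q}(\Omega)$, whence $\div\bb z=0$ in the sense of distributions and $\bb z\in\Wdivz$, while \eqref{HWzdecomp} is the isomorphism bound together with the triangle inequality. Uniqueness of $(\bb z,\grad\phi)$ is immediate, since $\grad\phi_1=\grad\phi_2$ with $\phi_i\in\Wz$ forces $\phi_1=\phi_2$.
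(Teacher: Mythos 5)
This theorem is imported from Simader--Sohr \cite{SimSoh96} and the paper offers no proof of it, so there is nothing internal to compare your argument against; what you have written is a reconstruction of the classical proof, and as a sketch it is faithful to how the result is actually established. The reduction to the weak Dirichlet problem, the Babu\v{s}ka--Ne\v{c}as framework requiring the inf-sup estimate for both exponents $p$ and $q$, and the proof of those estimates by localization, $C^1$ boundary flattening, coefficient freezing against the half-space Riesz-transform bound, and absorption of the small continuous perturbation --- this is precisely the ``functional analytic approach'' of Simader and Sohr, and your explanation of why $C^1$ is the borderline (continuity of the flattened coefficients makes the frozen-coefficient error arbitrarily small, which fails for Lipschitz charts) is the right one and consistent with the paper's own discussion of \cite{JerKen95}. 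Two caveats. First, the entire analytic content lives in the two inf-sup estimates, which you describe but do not prove; the compactness-contradiction step that removes the lower-order commutator terms also silently uses uniqueness for the weak problem, which must be established first (for $p\geq 2$ it follows from the $L^2$ theory by inclusion, but for $p<2$ it requires a duality argument). Second, the layer-potential ``alternative route'' is glibber than it looks: Fabes--Jodeit--Rivi\`ere solve the boundary-value problem with $L^p$ data and control nontangential maximal functions, whereas what is needed here is the interior bound $\norm{\grad\phi}_{\Ldom{p}}\leq C\norm{\bb v}_{\Ldom{p}}$ for the inhomogeneous problem with divergence-form right-hand side in $\W{-1}$; bridging that gap (via a Newtonian potential and the regularity problem) is not immediate. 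Neither caveat is a flaw for the purpose at hand, since the statement is a cited external result, but they mark where a self-contained proof would still require substantial work.
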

%****************

%****************
\begin{theorem}\cite[Theorem 1.3]{GenShe10} 
Let $\Omega\subset \mathbb{R}^d$ be a bounded convex Lipschitz domain and let $1<p<\infty$. Then the Helmholtz-Weyl decomposition \eqref{HWdecomp} holds. \label{convexDecomp}
\end{theorem}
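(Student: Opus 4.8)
\medskip
\noindent\textit{Proof plan.} The plan is to reduce Theorem~\ref{convexDecomp} to an a priori $L^p$-estimate for the weak Neumann problem and then to obtain that estimate on convex domains by combining a Rellich--Pohozaev identity with a real-variable argument. As noted above, for a given $\bb v\in\Ldom{p}^d$ the decomposition \eqref{HWdecomp} is equivalent to finding $\phi\in\W{1}/\mathbb{R}$ with
\[
  \rpair{\grad\phi,\grad\psi}=\rpair{\bb v,\grad\psi}\qquad\text{for all }\psi\in W^{1,q}(\Omega)/\mathbb{R};
\]
one then sets $\bb z:=\bb v-\grad\phi\in L_\sigma^p(\Omega)$, and the triangle inequality promotes an a priori bound $\norm{\grad\phi}_{\Ldom{p}}\le C\norm{\bb v}_{\Ldom{p}}$ into $\norm{\bb z}_{\Ldom{p}}\le(1+C)\norm{\bb v}_{\Ldom{p}}$, which is \eqref{HWdecomp}. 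Hence it suffices to show that, for every $1<p<\infty$, the solution map $\bb v\mapsto\grad\phi$ is bounded on $\Ldom{p}^d$ with a constant depending only on $p$, $d$ and $\Omega$.

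The case $p=2$ is classical: $\psi\mapsto\rpair{\bb v,\grad\psi}$ is a bounded functional on $H^1(\Omega)/\mathbb{R}$, so the Lax--Milgram theorem and Poincar\'e's inequality give a unique $\phi$ with $\norm{\grad\phi}_{L^2}\le\norm{\bb v}_{L^2}$ (indeed $\grad\phi$ is then the $L^2$-orthogonal projection of $\bb v$ onto gradients). For $p\ne2$ the argument is perturbative, in the spirit of the real-variable method of Caffarelli--Peral and Shen. The local ingredient one needs is a reverse H\"older inequality: there is an exponent $q_0>2$ such that every $u\in H^1(\Omega\cap B(x_0,2r))$ solving the homogeneous Neumann problem $\int_{\Omega\cap B(x_0,2r)}\grad u\cdot\grad\psi\,dV=0$ for all $\psi$ supported in $\Omega\cap B(x_0,2r)$ satisfies
\[
  \Bigl(\frac{1}{|E_r|}\int_{E_r}|\grad u|^{q_0}\,dV\Bigr)^{1/q_0}
  \le C\Bigl(\frac{1}{|E_{2r}|}\int_{E_{2r}}|\grad u|^{2}\,dV\Bigr)^{1/2},\qquad E_\rho:=\Omega\cap B(x_0,\rho).
\]

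This last estimate is exactly where convexity is used. A convex domain need not be smoother than Lipschitz, so one first approximates $\Omega$ from inside by smooth convex domains and argues on the approximants. Applying the Rellich--Pohozaev identity to the Neumann Laplacian on a smooth convex domain, the only boundary term that is not already controlled by the tangential derivatives of $u$ is the integral of the second fundamental form of the boundary against $|\nabla_T u|^2$; convexity forces this term to carry the favourable sign, which yields a Rellich estimate $\norm{\nabla_T u}_{L^2(\partial\Omega)}\le C\norm{\partial_n u}_{L^2(\partial\Omega)}$ with a constant uniform in the approximation. From this one gets nontangential maximal function bounds for $\grad u$ up to the boundary, and hence the reverse H\"older inequality above. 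Feeding this into Shen's real-variable argument---a Calder\'on--Zygmund stopping-time decomposition launched from the $p=2$ estimate---one upgrades the bound to $\norm{\grad\phi}_{\Ldom{p}}\le C\norm{\bb v}_{\Ldom{p}}$ for all $2\le p<q_0$, and since on a convex domain the Rellich estimate may be run with $q_0$ arbitrarily large, the whole range $2\le p<\infty$ follows. The range $1<p<2$ is then obtained by duality: the projection $\bb v\mapsto\bb v-\grad\phi$ on $\Ldom{p}^d$ has, as its adjoint for the $\Ldom{p}$--$\Ldom{q}$ pairing, the analogous projection on $\Ldom{q}^d$, so $L^p$-boundedness for $p$ is equivalent to $L^q$-boundedness for the conjugate exponent $q>2$. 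Combining the three regimes establishes \eqref{HWdecomp} on every bounded convex domain; the details are carried out in \cite{GenShe10}.

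I expect the main obstacle to be the boundary estimate. Because a convex domain carries no regularity beyond Lipschitz, the Rellich--Pohozaev identity cannot be used on $\Omega$ directly, and the heart of the matter is to run it on smooth convex approximations while keeping every constant uniform as the approximation converges; the remaining ingredients---the real-variable upgrade and the duality step---are, by comparison, standard.
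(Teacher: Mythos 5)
The paper does not prove this statement: it is quoted directly from Geng and Shen \cite{GenShe10}, so there is no internal argument to compare yours against. Your outline is, in spirit, a faithful summary of the strategy in that reference -- reduction of \eqref{HWdecomp} to $L^p$ bounds for the weak Neumann problem (an equivalence the paper itself records), the $p=2$ case via Lax--Milgram, a Shen-type real-variable bootstrap launched from a local reverse H\"older inequality, approximation of $\Omega$ by smooth convex domains, and duality for $1<p<2$. One mechanism is mislabeled, though, and it is the one that actually distinguishes convex domains from general Lipschitz domains: the Rellich--Pohozaev identity by itself only yields the restricted range of exponents already available on Lipschitz domains (the $(3/2-\epsilon,\,3+\epsilon)$ window the paper quotes from \cite{FabMenMit98}), and it cannot be ``run with $q_0$ arbitrarily large.'' The full range $2\le p<\infty$ in \cite{GenShe10} rests instead on a stronger local estimate -- a pointwise $L^\infty$ bound on $\grad u$ for local solutions with vanishing Neumann data, obtained from the subharmonicity of $|\grad u|^2$ combined with the favourable sign of the second fundamental form via the maximum principle -- which then gives the weak reverse H\"older inequality for every exponent. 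Apart from that relabeling, and the fact that your sketch (like the paper) ultimately defers the uniform-in-approximation details to the reference rather than supplying them, the plan is consistent with the cited proof.
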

%****************
We also mention that Amrouche et al. $\cite{AmrSel13}$ have published an $\L{p}$ version of the Hodge decomposition for domains with $C^{1,1}$ boundary. We now use Theorem \ref{C1decomp} to derive a new Helmholtz-Weyl decomposition for $\Wzcurl$ for bounded $C^1$ domain.

%****************
\begin{lemma} \label{Wzcurldecomp}
Let $\Omega\subset \mathbb{R}^d$ be a bounded simply connected $C^1$ domain and let $2\leq p<\infty$. Then the following direct sum holds,
\begin{align*}
\Wzcurl=\X\oplus\nabla \Wz.
\end{align*}
In other words, for any $\bb v\in \Wzcurl$, there exists unique $\phi \in \Wz$ and $\bb z\in \X$ such that $\bb v = \bb z+\grad \phi$ satisfying,
\begin{align}
\norm{\bb z}_{\L{p}(\Omega)} + \norm{\grad \phi}_{\L{p}(\Omega)}\leq C\norm{\bb v}_{\L{p}(\Omega)}, \quad C=C(\Omega,p,d)>0.
\end{align}
\end{lemma}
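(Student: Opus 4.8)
The plan is to obtain the decomposition of a given $\bb v \in \Wzcurl$ by applying the ordinary Helmholtz--Weyl decomposition \eqref{HWzdecomp} to $\bb v$ viewed merely as an element of $\Ldom{p}^d$ --- which is legitimate here because $\Omega$ is a bounded $C^1$ domain, so Theorem \ref{C1decomp} applies --- and then to argue that the solenoidal summand produced that way in fact lies in $\X$, i.e.\ that it retains the vanishing tangential trace of $\bb v$. Uniqueness and the stability bound should then be short.

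In detail, given $\bb v \in \Wzcurl \subset \Ldom{p}^d$, \eqref{HWzdecomp} furnishes $\phi \in \Wz$ and $\bb z \in \Wdivz$ with $\bb v = \bb z + \grad\phi$ and $\norm{\bb z}_{\L{p}(\Omega)} + \norm{\grad\phi}_{\L{p}(\Omega)} \le C \norm{\bb v}_{\L{p}(\Omega)}$. First I would record that $\grad\phi \in \Wcurl$, since $\grad\phi \in \Ldom{p}^d$ and $\curl\grad\phi = 0$ distributionally; hence $\curl\bb z = \curl\bb v - \curl\grad\phi = \curl\bb v \in \Ldom{p}^d$, so $\bb z \in \Wcurl$ as well. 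Everything then reduces to showing $\bb z \in \Wzcurl$, i.e.\ $\gamma_t(\bb z) = 0$.

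The one genuinely delicate point --- and the step I expect to be the main obstacle --- is establishing $\gamma_t(\grad\phi) = 0$ for $\phi \in \Wz$; granting that, linearity of $\gamma_t$ on $\Wcurl$ gives $\gamma_t(\bb z) = \gamma_t(\bb v) - \gamma_t(\grad\phi) = 0$, whence $\bb z \in \Wzcurl \cap \Wdivz = \X$. I would prove $\gamma_t(\grad\phi) = 0$ by density: choosing $\phi_n \in C_0^\infty(\Omega)$ with $\phi_n \to \phi$ in $\W{1}$, the fields $\grad\phi_n \in C_0^\infty(\Omega)^d$ are compactly supported, so $\gamma_t(\grad\phi_n) = 0$, while $\grad\phi_n \to \grad\phi$ in $\Wcurl$ because $\norm{\grad\phi_n - \grad\phi}_{\L{p}(\Omega)} \to 0$ and $\curl\grad\phi_n = \curl\grad\phi = 0$; continuity of $\gamma_t : \Wcurl \to (W^{1-1/p,p}(\partial\Omega)^d)'$ then carries the zero trace to the limit. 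Alternatively this can be read directly from the integration-by-parts identity \eqref{tangTrace} using $\gamma_0(\phi) = 0$. Care is needed here only in keeping the weak trace operators and their continuity straight.

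It remains to see that the sum is direct, which also delivers uniqueness of the pair $(\bb z, \phi)$. Suppose $\grad\phi \in \X$ with $\phi \in \Wz$. Then $\grad\phi \in \Wdivz$ means $\rpair{\grad\phi, \grad\chi} = 0$ for all $\chi \in C_0^\infty(\Omega)$, hence by density for all $\chi \in W_0^{1,q}(\Omega)$; since $p \ge 2$ and $\Omega$ is bounded we have $\phi \in W_0^{1,p}(\Omega) \subset W_0^{1,q}(\Omega)$, so taking $\chi = \phi$ yields $\norm{\grad\phi}_{\L{2}(\Omega)}^2 = 0$, forcing $\phi$ to be constant on the connected domain $\Omega$ and therefore $\phi = 0$ since $\phi \in \Wz$. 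Thus $\X \cap \nabla\Wz = \{0\}$, the decomposition $\bb v = \bb z + \grad\phi$ is unique, and the asserted inequality is precisely the one inherited from \eqref{HWzdecomp}.
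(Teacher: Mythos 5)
Your proposal is correct and follows essentially the same route as the paper's own proof: apply the $L^p$ Helmholtz--Weyl decomposition \eqref{HWzdecomp} to $\bb v$, show $\gamma_t(\grad\phi)=0$ by approximating $\phi$ with $C_0^\infty(\Omega)$ functions and using continuity of the tangential trace, and prove directness by testing $\rpair{\grad\phi,\grad\psi}=0$ with $\psi=\phi$ (legitimate since $p\geq 2\geq q$) to conclude $\phi=0$. The stability bound is inherited from \eqref{HWzdecomp} in both arguments, so there is nothing to add.
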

%****************
\begin{proof}
Let $\bb v\in \Wzcurl\subset \Ldom{p}^d$. Then by Theorem \ref{C1decomp}, $\bb v=\grad\phi+\bb z$ for some $\phi\in \Wz$ and $\bb z\in \Wdivz$. Since $\grad \Wz\subset\Wcurl$, $\gamma_t(\grad \phi)$ is well defined. Let $\{\phi_k\in C_0^\infty(\Omega)\}$ converging to $\phi$ in $\Wz$. Since $\gamma_0(\grad \phi_k)=0$ and so $\gamma_t(\grad \phi_k)=0$, then by continuity of the tangential trace operator $\gamma_t(\grad \phi)=0$ and so $\bb z = \bb v-\grad\phi\in \Wzcurl$. I.e. $\bb z\in \X$.

To show the sum is direct, suppose $\bb v\in \X\cap\grad\Wz$. Then $\bb v=\grad \phi$ for some $\phi\in \Wz$. Since $\bb v\in \X$, for all $\psi \in W_0^{1,q}(\Omega)$,
\begin{align}
0= \rpair{\bb v,\grad \psi}=\rpair{\grad \phi,\grad \psi}\label{p1e1}
\end{align} As $p\geq 2\geq q>1$, $\phi\in W_0^{1,2}(\Omega) \subset W_0^{1,q}(\Omega)$. Setting $\psi=\phi$ in \eqref{p1e1} implies $\norm{\grad \phi}_{\Ldom{2}}=0$ and hence $\phi=0$ a.e. by Friedrichs' inequality. I.e. $\bb v=\grad \phi=0$.
\end{proof}

Finally, we conclude with the quasi-interpolation operator $\Pi_h$ of Sch{\"o}berl \cite[Theorem 1]{Sch07}, which for N\'ed\'elec elements plays the same role  the Cl\'ement operator does for Lagrange elements.
%****************
\begin{theorem} \label{quasiHyp}
Consider a bounded polyhedral domain $\Omega_h \subset \mathbb{R}^3$ possessing a
triangulation $\Tau_h$.
There exists a quasi-interpolation operator $\Pi_h:H(\text{curl};\Omega_h)\rightarrow \Xh^{(k)}$ with the property: 
for any $\bb v\in H(\text{curl};\Omega_h)$, there exists $\phi \in H^1(\Omega_h)$ and $\bb w\in H^1(\Omega_h)^3$ such that,
\begin{align}
\bb v-\Pi_h \bb v = \grad \phi + \bb w \, . \label{decompEqn}
\end{align} 
Moreover,  on each $K\in \Tau_h$ there exists an element patch $\omega_K$ of $\bar{K}$ and 
a constant $C>0$ depending only on the shape constants of the elements in $\omega_K$ such 
that $\phi, \boldsymbol w$ satisfy
\begin{align}
h_K^{-1}\norm{\phi}_{L^2(K)}+\norm{\nabla \phi}_{L^2(K)}&\leq C\norm{\boldsymbol v}_{L^2(\omega_K)}, \label{quasiE1} \\
h_K^{-1}\norm{\boldsymbol w}_{L^2(K)}+\norm{\nabla \boldsymbol w}_{L^2(K)}&\leq C\norm{\nabla \times\boldsymbol v}_{L^2(\omega_K)}. \label{quasiE2}
\end{align}
\end{theorem}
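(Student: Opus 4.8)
The plan is to construct $\Pi$ using a partition-of-unity / averaging strategy, following Sch\"oberl's original argument, and then to extract the decomposition \eqref{decompEqn} from a commuting-diagram property together with a local Poincar\'e-type estimate. First I would introduce a macro-element patch decomposition: for each vertex $a$ of the triangulation let $\omega_a$ be the union of elements containing $a$, pick a smooth partition of unity $\{\chi_a\}$ subordinate to $\{\omega_a\}$ with $\|\nabla\chi_a\|_{L^\infty}\lesssim h_a^{-1}$, and on each patch $\omega_a$ define a local averaging operator $R_a:H_0(\mathrm{curl};\omega_a)\to$ (lowest-order N\'ed\'elec space on the patch) that is a projection onto locally affine-ish fields, e.g. via an $L^2$-type fit to the edge moments or a Cl\'ement-style mollification. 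Then set $\Pi\bb v := \sum_a \chi_a\, R_a\bb v$ (with appropriate handling of boundary vertices so that the tangential trace vanishes, forcing $\Pi\bb v\in\Xh^{(0)}$). The key local stability input is that $R_a$ reproduces constants (or more precisely the kernel of $\mathrm{curl}$ on the patch, i.e.\ gradients), so that $\bb v-R_a\bb v$ is controlled by $\nabla\times\bb v$ on $\omega_a$ via a Poincar\'e/Ne\v{c}as inequality on the star-shaped patch.

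The heart of the matter is the splitting \eqref{decompEqn}. Here I would invoke the regular decomposition (Hiptmair-type lifting) of $H_0(\mathrm{curl};\Omega)$: any $\bb v\in H_0(\mathrm{curl};\Omega)$ can be written $\bb v=\bb\Phi+\nabla\varphi$ with $\bb\Phi\in H_0^1(\Omega)^3$, $\varphi\in H_0^1(\Omega)$, and $\|\bb\Phi\|_{H^1}+\|\varphi\|_{H^1}\lesssim\|\bb v\|_{H(\mathrm{curl})}$, together with the sharper localized statement that $\|\bb\Phi\|_{H^1(\omega_K)}\lesssim\|\nabla\times\bb v\|_{L^2(\widetilde\omega_K)}$ on a slightly enlarged patch (this is exactly the kind of localized regular splitting available for Lipschitz, simply connected domains). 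One then applies $\Pi$ to $\bb v$ and uses the commuting property $\Pi(\nabla\varphi)=\nabla(\pi^{\mathrm{grad}}\varphi)$ of the underlying Cl\'ement-type operator, so that $\bb v-\Pi\bb v=(\bb\Phi-\Pi\bb\Phi)+\nabla(\varphi-\pi^{\mathrm{grad}}\varphi)$; the first summand is then further massaged — using that $\bb\Phi$ is $H^1$ so that $\Pi\bb\Phi$ differs from a smooth function by a gradient (again via the commuting diagram applied to $\nabla$-part of a local Helmholtz splitting of $\bb\Phi-\Pi\bb\Phi$), or more simply by absorbing the N\'ed\'elec-interpolation error of an $H^1$ field into the $\bb w$ term directly. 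The upshot is to identify $\bb w$ with the "$H^1$-regular, curl-controlled" piece satisfying \eqref{quasiE2} and $\nabla\phi$ with the "gradient, $\bb v$-controlled" piece satisfying \eqref{quasiE1}.

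For the estimates \eqref{quasiE1}--\eqref{quasiE2} themselves, I would work element by element: on each $K$, $\phi$ and $\bb w$ restricted to $K$ only see contributions from patches $\omega_a$ with $a\in\overline K$, so the relevant neighbourhood is $\omega_K:=\bigcup_{a\in\overline K}\omega_a$. Standard scaling (Bramble-Hilbert on the reference patch, then scaling back with $h_K\simeq h_a$ by shape-regularity) converts the reproduction property of $R_a$ and the $L^\infty$ bound on $\nabla\chi_a$ into the weighted bounds $h_K^{-1}\|\cdot\|_{L^2(K)}+\|\nabla\cdot\|_{L^2(K)}\lesssim(\text{data on }\omega_K)$, with the data being $\|\bb v\|_{L^2(\omega_K)}$ for $\phi$ and $\|\nabla\times\bb v\|_{L^2(\omega_K)}$ for $\bb w$; the constant depends only on the shape-regularity of $\omega_K$, as claimed. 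The main obstacle, and the step I would spend the most care on, is the clean separation of the interpolation error into a pure-gradient part controlled by $\bb v$ and an $H^1$ part controlled solely by $\nabla\times\bb v$ — this is precisely where the commuting-diagram structure of $\Pi$ relative to the de Rham complex is essential, and where Sch\"oberl's construction is more delicate than a naive Cl\'ement averaging; getting the boundary vertices right (so that $\phi\in H_0^1$, $\bb w\in H_0^1(\Omega)^3$, and $\Pi\bb v$ has vanishing tangential trace) is the secondary technical nuisance. I would simply cite \cite{Sch07} for the full construction and verification, reproducing only the statement as given.
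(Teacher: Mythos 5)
The paper does not prove this statement at all: Theorem \ref{quasiHyp} is imported verbatim from Sch\"oberl \cite{Sch07}, and your proposal ultimately defers to the same citation. Your sketch of the construction (partition of unity, local commuting averaging operators, localized regular decomposition of $H_0(\mathrm{curl};\Omega)$, patchwise Poincar\'e estimates and scaling) is a faithful outline of Sch\"oberl's actual argument, and you correctly flag the localized regular splitting and the boundary-vertex treatment as the genuinely delicate steps, so there is nothing to object to.
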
 
%Note that for a bounded $C^{1,1}$ domain $\Omega$ and an interior polyhedral mesh $\Omega_h$, the enlarged element patch $\omega_K \subset \Omega_h \subset \Omega$ is well-defined.

%***************
\section{Well-posedness of the semi-discretization}
\label{app:WP-semidisc}

This section contains a short proof of the well-posedness of the semi-discrete weak formulation of \eqref{DWF}. 
The well-posedness is not required for the construction of the a posteriori error estimators
in the following section, and so this section can be read independently of the others. 
Nevertheless, for the sake of accessibility, this topic is best discussed first.

\begin{theorem} \label{dwfWP}
There exists a unique solution $\bb u_h \in C^1(I;\Xhz^{(k)})$ satisfying the semi-discrete weak formulation of \eqref{DWF}. Moreover, the stability estimates hold,
\begin{align}
\sup_{t\in [0,T]} \norm{\bb u_h(t)}_{L^2(\Omega_h)}^2+2\int_0^T \norm{\nabla\times \bb u_h(s)}^p_{L^p(\Omega_h)} ds \nonumber\\
&\hspace{-30mm} \leq e\left(\norm{\bb u_{0,h}}_{L^2(\Omega_h)}^2 + T\int_0^T \norm{\bb f(s)}_{L^2(\Omega_h)}^2ds\right), \label{stabEst1}\\
\int_0^T \norm{\partial_t \bb u_h(s)}_{L^2(\Omega_h)}^2 ds+\sup_{t\in [0,T]} \norm{\nabla\times \bb u_h(t)}^p_{L^p(\Omega_h)} \nonumber \\
&\hspace{-30mm} \leq \norm{\nabla\times \bb u_{0,h}}_{L^p(\Omega_h)}^p + \frac{p^2}{4(p-1)}\int_0^T \norm{\bb f(s)}_{L^2(\Omega_h)}^2ds. \label{stabEst2}
\end{align}
\end{theorem}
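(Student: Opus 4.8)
\emph{Proof proposal.}
The plan is to use that $\Xh^{(k)}$ is finite dimensional to rewrite \eqref{DWF} as a system of ODEs, obtain a local-in-time solution by Picard--Lindel\"of, and then use the two energy estimates both to propagate the solution to all of $I$ and to establish \eqref{stabEst1}--\eqref{stabEst2}. Fix a basis $\{\bb\varphi_j\}_{j=1}^N$ of $\Xh^{(k)}$ and write $\bb u_h(t)=\sum_j c_j(t)\bb\varphi_j$. Testing \eqref{DWF} against each $\bb\varphi_i$ turns it into
\begin{align*}
M\dot{\bb c}(t)=\bb F(t)-\bb A(\bb c(t)),\qquad \bb c(0)=\bb c_0,
\end{align*}
where $M_{ij}=\rpair{\bb\varphi_i,\bb\varphi_j}$ is symmetric positive definite, hence invertible, $\bb F_i(t)=\rpair{\bb f(t),\bb\varphi_i}$ is continuous in $t$ (by $\bb f\in C(I;W^q(\text{div}^0;\Omega))$), and $\bb A_i(\bb c)=\pair{\mathcal P(\sum_j c_j\bb\varphi_j),\bb\varphi_i}$. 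Since $p\ge2$, the map $\bb\xi\mapsto|\bb\xi|^{p-2}\bb\xi$ is $C^1$ on $\mathbb R^d$, so $\bb A$ is locally Lipschitz on $\mathbb R^N$; Picard--Lindel\"of then gives a unique $\bb c\in C^1([0,T_*);\mathbb R^N)$ on a maximal interval $[0,T_*)\subseteq I$, i.e. a unique $\bb u_h\in C^1([0,T_*);\Xh^{(k)})$ solving \eqref{DWF}.

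Next I would derive \eqref{stabEst1} by taking $\bb v_h=\bb u_h(t)$ in \eqref{DWF}. Using $\rpair{\pdt\bb u_h,\bb u_h}=\tfrac12\tfrac{d}{dt}\norm{\bb u_h}_{L^2(\Omega)}^2$, the coercive identity $\pair{\mathcal P(\bb u_h),\bb u_h}=\param\norm{\curl\bb u_h}_{L^p(\Omega)}^p\ge0$, and the split $\rpair{\bb f,\bb u_h}\le\tfrac{1}{2T}\norm{\bb u_h}_{L^2(\Omega)}^2+\tfrac T2\norm{\bb f}_{L^2(\Omega)}^2$, one gets
\begin{align*}
\tfrac{d}{dt}\norm{\bb u_h}_{L^2(\Omega)}^2+2\param\norm{\curl\bb u_h}_{L^p(\Omega)}^p\le\tfrac1T\norm{\bb u_h}_{L^2(\Omega)}^2+T\norm{\bb f}_{L^2(\Omega)}^2 .
\end{align*}
Multiplying by the integrating factor $e^{-t/T}$, integrating over $[0,t]$, and using $e^{t/T}\le e$ on $[0,T]$ absorbs the linear term and delivers \eqref{stabEst1} (with $\param=1$ as implicit in the statement). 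In particular $\norm{\bb u_h(t)}_{L^2(\Omega)}$, hence $|\bb c(t)|$, stays bounded on $[0,T_*)$ by a constant depending only on the data; then $\dot{\bb c}$ is bounded there, $\bb c$ extends continuously to $t=T_*$, and restarting Picard--Lindel\"of contradicts maximality unless $T_*=T$. Thus the solution is global on $I$.

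For \eqref{stabEst2} I would take $\bb v_h=\pdt\bb u_h(t)\in\Xh^{(k)}$, admissible because $\bb u_h\in C^1(I;\Xh^{(k)})$. Since $p\ge2$ the map $\bb\xi\mapsto|\bb\xi|^{p}$ is $C^1$, so $t\mapsto\norm{\curl\bb u_h(t)}_{L^p(\Omega)}^p$ is differentiable with $\pair{\mathcal P(\bb u_h),\pdt\bb u_h}=\tfrac{\param}{p}\tfrac{d}{dt}\norm{\curl\bb u_h}_{L^p(\Omega)}^p$. Hence, after the Young split $\rpair{\bb f,\pdt\bb u_h}\le\tfrac{p-1}{p}\norm{\pdt\bb u_h}_{L^2(\Omega)}^2+\tfrac{p}{4(p-1)}\norm{\bb f}_{L^2(\Omega)}^2$,
\begin{align*}
\tfrac1p\norm{\pdt\bb u_h}_{L^2(\Omega)}^2+\tfrac{\param}{p}\tfrac{d}{dt}\norm{\curl\bb u_h}_{L^p(\Omega)}^p\le\tfrac{p}{4(p-1)}\norm{\bb f}_{L^2(\Omega)}^2 ,
\end{align*}
and multiplying by $p$ and integrating over $[0,T]$ (again with $\param=1$) yields \eqref{stabEst2}.

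The Galerkin/ODE reduction and the two energy identities are routine. The steps needing care, and where I expect the only real work to be, are: (i) verifying that $\bb\xi\mapsto|\bb\xi|^{p-2}\bb\xi$ is genuinely $C^1$ (not merely continuous) for $p\ge2$, which is exactly what makes Picard--Lindel\"of, and therefore uniqueness, applicable; (ii) justifying the differentiation of the $L^p$-energy in time, which again relies on $p\ge2$ so that nothing goes wrong where $\curl\bb u_h$ vanishes; and (iii) the precise choice of Young weights together with the $e^{-t/T}$ integrating factor, which is what produces the exact constants $e$ and $p^2/(4(p-1))$ in the statement. Step (iii) is the only genuinely fiddly one.
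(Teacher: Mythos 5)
Your proposal is correct and follows essentially the same route as the paper: reduce \eqref{DWF} to a finite-dimensional ODE system (the paper phrases this via the Riesz map rather than a Galerkin basis, which is cosmetic), obtain local existence from Picard--Lindel\"of after checking local Lipschitz continuity of the nonlinearity (the paper uses the pointwise Barrett--Liu inequality $\left||\bb x|^{p-2}\bb x-|\bb y|^{p-2}\bb y\right|\leq C_p|\bb x-\bb y|(|\bb x|+|\bb y|)^{p-2}$ where you invoke $C^1$ regularity of $\bb\xi\mapsto|\bb\xi|^{p-2}\bb\xi$, but these are interchangeable here), and then globalize with the identical energy tests $\bb v_h=\bb u_h$ and $\bb v_h=\pdt\bb u_h$, the same Young weights, and the same Gronwall/integrating-factor step yielding the constants $e$ and $p^2/(4(p-1))$.
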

\begin{proof}
The space of $k$-th order N\'ed\'elec elements  $\Xhz^{(k)}$ is a closed subspace of $\Wcurlh$ and we restrict
the norm of $\Wcurlh$ to it, 
$$
\norm{\bb v_h}_{\Wcurlh}^p = \norm{\bb v_h}_{L^p(\Omega_h)}^p + \norm{\nabla\times\bb v_h}_{L^p(\Omega_h)}^p, \hspace{4mm} \bb v_h\in \Xhz^{(k)}.
$$
By Riesz representation theorem for $L^p$ functions, there is an isometry $\Phi:L^q(\Omega_h) \rightarrow L^p(\Omega_h)'$,
also known as the Riesz map. Then we can view the semi-discrete weak formulation of \eqref{DWF} as seeking an unique solution $\bb u_h\in C^1(I;\Xhz^{(k)})$ to the first order ODEs, 
\begin{equation}
\Phi\circ\partial_t \bb u_h(t) = - \mathcal{P}(\bb u_h(t))+ \bb f(t). \label{DWFODE}
\end{equation}
The proof proceeds in 2 steps. First, we show local existence for \eqref{DWFODE}. Second, we extend its interval of existence to $I$ by a priori estimates.

To show local existence, we verify that the right hand side of \eqref{DWFODE} is continuous in $t$ and locally Lipschitz continuous in $\bb u_h$. Indeed, since $\bb f\in C(I;W^q(\text{div}^0;\Omega))$ with $q<2$ and $\Omega_h\subset \Omega$, $\bb f\in L^q(\Omega_h)$ for all $t\in I$. This implies for any $\bb v \in \Wcurlh$ and $t,s\in I$,
\begin{align*}
|\rpairh{\bb f(t)-\bb f(s), \bb v}| &\leq  \norm{\bb f(t)-\bb f(s)}_{L^q(\Omega_h)} \norm{\bb v}_{L^p(\Omega_h)}\\
&\leq \norm{\bb f(t)-\bb f(s)}_{L^q(\Omega_h)} \norm{\bb v}_{\Wcurlh}.
\end{align*} It follows that,
\begin{align*}
\norm{\bb f(t)-\bb f(s)}_{{\Wcurlh}'} &:= \sup_{0\neq \bb v \in \Wcurlh}\frac{|\rpairh{\bb f(t)-\bb f(s), \bb v}|}{\norm{\bb v}_{\Wcurlh}} \\ &\leq \norm{\bb f(t)-\bb f(s)}_{L^q(\Omega_h)},
\end{align*} which tends to $0$ as $s\rightarrow t$. This shows $\bb f(t)\in {\Wcurlh}'$ is continuous in $t$.

Now recall from \cite[Lemma 2.2]{BarLiu94}, that the following equality holds for some $C_p>0$,
\begin{equation*}
\left||\bb x|^{p-2}\bb x-|\bb y|^{p-2}\bb y\right|\leq C_p|\bb x-\bb y| (|\bb x| + |\bb y|)^{p-2}, \quad \forall \bb x,\bb y \in \mathbb{R}^d.
\end{equation*} So for any $\bb u, \bb v, \bb w \in \Wcurlh$, it follows from the above inequality and H\"older's inequality with $p>2$ so that $r:=\frac{p}{q}>1$, $s:=\frac{p}{q(p-2)}>1$ and $\frac{1}{r}+\frac{1}{s}=1$,
\begin{align*}
|\pairh{\mathcal{P}(\bb u)-\mathcal{P}(\bb w), \bb v}| &\leq \int_{\Omega_h} 
          \left||\nabla \times \bb u|^{p-2}\nabla \times \bb u - |\nabla \times \bb w|^{p-2}\nabla \times \bb w \right| |\nabla \times\bb v| \, dV \\
&\hspace{-20mm}\leq C_p \int_{\Omega_h} |\nabla \times (\bb u -\bb w)|(|\nabla \times\bb u|+|\nabla \times\bb w|)^{p-2}|\nabla \times\bb v| \, dV\\
&\hspace{-20mm}\leq C_p \norm{\nabla \times \bb v}_{L^p(\Omega_h)}\left(\int_{\Omega_h}|\nabla \times (\bb u -\bb w)|^q(|\nabla \times\bb u|+|\nabla \times\bb w|)^{(p-2)q}dV\right)^{1/q}\\
&\hspace{-20mm}\leq C_p \norm{\nabla \times \bb v}_{L^p(\Omega_h)}\left(\int_{\Omega_h}|\nabla \times (\bb u -\bb w)|^{qr} dV\right)^{1/qr} \times\\
&\hspace{+10mm} \left(\int_{\Omega_h}(|\nabla \times\bb u|+|\nabla \times\bb w|)^{(p-2)qs}dV\right)^{1/qs}\\
&\hspace{-20mm}= C_p\norm{\nabla \times\bb v}_{L^p(\Omega_h)}\norm{\nabla \times(\bb u-\bb w)}_{L^p(\Omega_h)}\norm{|\nabla \times\bb u|+|\nabla \times\bb w|}_{L^p(\Omega_h)}^{p-2} \\
&\hspace{-20mm}\leq C_p\norm{\bb v}_{\Wcurlh}\norm{\bb u-\bb w}_{\Wcurlh}\norm{|\nabla \times\bb u|+|\nabla \times\bb w|}_{L^p(\Omega_h)}^{p-2}.
\end{align*} Moreover, the case for $p=2$ follows directly from Cauchy-Schwarz inequality. Thus, we have that for any compact subset $A\subset \Xhz^{(k)}$ and any $\bb u_h, \bb w_h\in A$,
\begin{align*}
\norm{\mathcal{P}(\bb u_h)-\mathcal{P}(\bb w_h)}_{{\Wcurlh}'} &:= \sup_{0\neq \bb v\in \Wcurlh}\frac{|\pairh{\mathcal{P}(\bb u_h)-\mathcal{P}(\bb w_h), \bb v}|}{\norm{\bb v}_{\Wcurlh}} \\
&\hspace{-30mm}\leq \left(C_p \max_{\bb y_h, \bb z_h\in A} \norm{|\nabla \times\bb y_h|+|\nabla \times\bb z_h|}_{L^p(\Omega_h)}^{p-2}\right) \norm{\bb u_h-\bb w_h}_{\Wcurlh}.
\end{align*} 
This shows that $\mathcal{P}(\bb u_h)$ is locally Lipschitz continuous in $\bb u_h$. Thus, by Picard's existence theorem, there exists an unique local solution $\bb u_h \in C^1([0,\tilde{T});\Xhz^{(k)})$ to \eqref{DWF}, with $[0,\tilde{T}) \subset I$. 

Finally, we extend $[0,\tilde{T})$ to $I$ by showing the following a priori estimates. At every $t\in [0,\tilde{T})$, we have $\bb u_h(t,\cdot) \in \Xhz^{(k)}$. Setting now $\bb v_h = \bb u_h$ in \eqref{DWF}, and combining with Young's inequality and Gronwall's inequality implies
\begin{align*}
&\frac{d}{dt}\norm{\bb u_h}_{L^2(\Omega_h)}^2 + 2\norm{\nabla \times \bb u_h}_{L^p(\Omega_h)}^p \leq \epsilon \norm{\bb u_h}_{L^2(\Omega_h)}^2+ \frac{1}{\epsilon}\norm{\bb f}_{L^2(\Omega_h)}^2 \\
\Rightarrow &\norm{\bb u_h(t)}_{L^2(\Omega_h)}^2 + 2\int_0^t \norm{\nabla \times \bb u_h(s)}_{L^p(\Omega_h)}^p ds \\&
\hskip 40mm\leq e^{\epsilon T}\norm{\bb u_{0,h}}_{L^2(\Omega_h)}^2 +\frac{e^{\epsilon T}}{\epsilon}\int_0^T\norm{\bb f}_{L^2(\Omega_h)}^2 ds
\end{align*} Thus, taking supremum on the left hand side and setting $\epsilon = \frac{1}{T}$ shows  the stability estimate \eqref{stabEst1}, which implies $[0,\tilde{T})$ can be extended to $I$. Similarly, the second stability estimate \eqref{stabEst2} follows by setting $\bb v_h = \partial_t\bb u_h$ in \eqref{DWF} and noting that $\frac{1}{p}\frac{d}{dt} \norm{\nabla \times \bb u_h}_{L^p(\Omega_h)}^p = \pairh{\mathcal{P}(\bb u_h), \partial_t\bb u_h}$. 
\end{proof}

%*****************************
\section{A posteriori error estimator}
\label{sec:apee}

This section contains the main result of this paper, Theorem \ref{thm:main_thm2}. The proof follows the usual residual-based approach except for the
treatment of the non-conformity and nonlinearity. 
We begin with Lemma \ref{ncTest} which enables us to test the weak formulation with a larger test space. This is then used to
bound the error, as stated in Theorem \ref{thm:main_thm1}. Afterwards, stability estimates for both the trace operator and Sch\"oberl's quasi-interpolation operator allow us to combine the local estimate into a global estimate of Theorem \ref{thm:main_thm2}.

\begin{lemma} \label{ncTest}
Consider a $C^1$ simply connected bounded domain $\Omega$ and a source term $\bb f\in L^2(I;W^q(\text{div}^0;\Omega))$. 
Assume that $\bb u$ is a weak solution to \eqref{WF}, then 
\begin{align}
\rpair{\pdt\bb u,\bb v}+\pair{\mathcal{P}(\bb u),\bb v}=\rpair{\bb f,\bb v}, \quad \forall \bb v\in \Wzcurl.
\end{align}
\end{lemma}
\begin{proof} Let $\bb v\in \Wzcurl$. By Lemma \ref{Wzcurldecomp}, $\bb v=\bb z+\grad \phi$ for some $\phi \in \Wz$ and $\bb z\in \X$. Since $\bb u\in \X \subset\Wdivz$, $\bb f\in W^q(\text{div}^0;\Omega)$ and $\curl\grad \phi=0$ is well-defined for $\phi \in \Wz$,
\begin{align*}
\rpair{\pdt\bb u,\bb v}+\pair{\mathcal{P}(\bb u),\bb v} &= \left[\frac{}{}\rpair{\pdt\bb u,\bb z}+\pair{\mathcal{P}(\bb u),\bb z}\right] + \rpair{\pdt\bb u,\grad \phi}+\pair{\mathcal{P}(\bb u),\grad \phi}\\
&=\rpair{\bb f,\bb z}+\frac{d}{dt}\underbrace{\rpair{\bb u,\grad \phi}}_{=0}+ \rpair{\rho(\curl\bb u)\curl\bb u,\curl\grad \phi}\\
&=\rpair{\bb f,\bb z}+\underbrace{\rpair{\bb f,\grad \phi}}_{=0}\\
&=\rpair{\bb f,\bb v}.
\end{align*}
We remark that the interchange of differentiation and integration was permitted by Theorem (2.27) of \cite{Fol84}.
\end{proof}

Due to the discrepancy of the tangential boundary condition between $\bb u$ and $\bb u_h$ on $\Omega_h$, we will also need to decompose $\Xh^{(k)}$ into two contributions which are associated with the interior and boundary elements of $\Omega_h$. Specifically for $\Omega_h\subset\mathbb{R}^3$, $\bb v_h\in \Xh^{(k)}$ can be expressed as linear combinations of global shape functions $\{\bb \psi_{E,i}\}\cup \{\bb \psi_{F,i}\} \cup \{\bb \psi_{K,i}\}$ by assigning the same degrees of freedom along tangential components of $\bb v_h$ on common edges and faces \cite{Mon03,ErnGue04},
\begin{align*}
\bb v_h &= \sum_{\substack{E\in \mathcal{E}(\Omega_h)\\1\leq i \leq N_e}}\left(\int_E \bb v_h\cdot \bb\tau p_i ds\right)\bb \psi_{E,i}+\sum_{\substack{F\in \mathcal{F}(\Omega_h)\\ 1\leq i\leq N_f}}\left(\int_F (\bb v_h\times \bb n)\cdot \bb q_i dA\right)\bb \psi_{F,i}\\
&\hskip 4mm+\sum_{\substack{K\in \Tau_h\\ 1\leq i\leq N_v}}\left(\int_K \bb v_h\cdot \bb r_i dV\right)\bb \psi_{K,i},
\end{align*}
where $\{p_i\}_{i=1}^{N_e}\subset \mathbb{P}_{k-1}, \{\bb q_i\}_{i=1}^{N_f} \subset [\mathbb{P}_{k-2}]^3, \{\bb r_i\}_{i=1}^{N_v} \subset [\mathbb{P}_{k-3}]^3$ are some fixed polynomial basis, and the face and volume degrees of freedom are present only when $k\geq 2$ and $k\geq 3$, respectively. 
Denoting $\mathcal{F}^\partial(\Omega_h)$ and $\mathcal{E}^\partial(\Omega_h)$ as the set of faces and edges on the boundary $\partial\Omega_h$ and $\mathcal{F}^I(\Omega_h):=\mathcal{F}(\Omega_h)\setminus \mathcal{F}^\partial(\Omega_h)$ and $\mathcal{E}^I(\Omega_h):=\mathcal{E}(\Omega_h)\setminus \mathcal{E}^\partial(\Omega_h)$ as the set of faces and edges on the interior part of $\Omega_h$, we can write $\bb v_h := \bb v_h^0 + {\bb v}_h^\partial$, where $\bb v_h^0$ and ${\bb v}_h^\partial$ are interior and boundary parts of $\bb v_h$ defined as
\begin{align*}
{\bb v}_h^\partial &:= \sum_{\substack{E\in \mathcal{E}^\partial(\Omega_h)\\1\leq i \leq N_e}}\left(\int_E \bb v_h\cdot \bb\tau p_i ds\right)\bb \psi_{E,i}+\sum_{\substack{F\in \mathcal{F}^\partial (\Omega_h)\\ 1\leq i\leq N_f}}\left(\int_F (\bb v_h\times \bb n)\cdot \bb q_i dA\right)\bb \psi_{F,i},\\
\bb v_h^0 &:= \bb v_h - {\bb v}_h^\partial \\
&= \sum_{\substack{E\in \mathcal{E}^I(\Omega_h)\\1\leq i \leq N_e}}\left(\int_E \bb v_h\cdot \bb\tau p_i ds\right)\bb \psi_{E,i}+\sum_{\substack{F\in \mathcal{F}^I(\Omega_h)\\ 1\leq i\leq N_f}}\left(\int_F (\bb v_h\times \bb n)\cdot \bb q_i dA\right)\bb \psi_{F,i}\\
&\hskip 6mm+\sum_{\substack{K\in \Tau_h\\ 1\leq i\leq N_v}}\left(\int_K \bb v_h\cdot \bb r_i dV\right)\bb \psi_{K,i}.
\end{align*}
We note that by unisolvency of the degrees of freedom for N\'ed\'elec elements, $\gamma_t(\bb v_h)=\gamma_t(\bb v_h^\partial)$, and so $\bb v_h^0 \in \Xhz^{(k)}$. Moreover, $\text{supp}({\bb v}_h^\partial)=\Omega_h\setminus{\Omega}_h^0$ where $\displaystyle\Omega_h^0:= \bigcup_{\substack{K\in\mathcal{T}_h, \\ \overline{K}\cap \partial \Omega_h = \emptyset}}\overline{K}$.

We are now in a position to prove a key theorem of a posterior error estimation for the $p$-curl problem.

\begin{theorem} \label{thm:main_thm1}
Consider a $C^{1,1}$ simply connected bounded domain $\Omega$ and a source term $\bb f\in C(I;H(\text{div}^0;\Omega))$. Let $\{\Tau_h\}_{h>0}$ be shape-regular triangulations satisfying the interior mesh property provided by Theorem \ref{thm:triangulation}. If $\bb u$ and $\bb u_h$ are respective solutions to \eqref{WF} and \eqref{DWF}, then there exists $C>0$ depending only on shape regularity condition of Theorem \ref{thm:triangulation} such that for any $\bb v\in \Wzcurl$, 
\begin{align}
&\rpair{\pdt(\bb u-\tilde{\bb u}_h), \bb v} + \pair{\mathcal{P}(\bb u)-\mathcal{P}(\tilde{\bb u}_h),\bb v} \leq \left(\bb f,\bb v\right)_{\Omega\setminus \Omega_h} + \Res(\bb u_h,(\Pi_h\bb v)^\partial; \Omega_h\setminus \Omega_h^0)  \label{resEstEqn} \\          
&   \hspace{30mm} + C \big( (\eta_d+\eta_n+\eta_{n,\partial}) \norm{\bb v}_{L^2(\Omega)}+ (\eta_i+\eta_t+\eta_{t,\partial}) \norm{\curl\bb v}_{L^2(\Omega)} \big), \nonumber
\end{align} where $(\Pi_h\bb v)^\partial$ is the boundary part of $\Pi_h\bb v \in \Xh$, the Sch\"oberl quasi-interpolant of $\bb v$, 
\begin{align}
\Res(\bb u_h, \bb v_h^\partial; \Omega_h\setminus\Omega_h^0) &:= (\bb f-\partial_t \bb u_h, \bb v_h^\partial)_{\Omega_h\setminus\Omega_h^0}-\langle\mathcal{P}(\bb u_h), \bb v_h^\partial\rangle_{\Omega_h\setminus\Omega_h^0}, \label{resDefn}\\
 \hspace{10mm} \eta_i^2 &:= \sum_{K\in\Tau_h} h_K^2\norm{ \bb f-\pdt\bb u_h-\curl (\rho(\curl \bb u_h)\curl \bb u_h)}_{L^2(K)}^2 , \nonumber\\
\hspace{10mm} \eta_d^2 &:= \sum_{K\in\Tau_h} h_K^2\norm{\div \pdt \bb u_h}_{L^2(K)}^2 , \nonumber\\
\hspace{10mm} \eta_n^2 &:= \sum_{F\in\mathcal{F}^I(\Omega_h)} h_F\norm{\jump{\gamma_n(\pdt\bb u_h)}}_{L^2(F)}^2, \nonumber \\
\hspace{10mm} \eta_t^2 &:= \sum_{F\in\mathcal{F}^I(\Omega_h)} h_F\norm{\jump{\gamma_t(\rho(\curl \bb u_h)\curl \bb u_h)}}_{L^2(F)}^2 , \nonumber\\
\hspace{10mm} \eta_{n,\partial}^2 &:= \sum_{F\in\mathcal{F}^\partial(\Omega_h)} h_F\norm{\gamma_n(\bb f - \pdt\bb u_h)}_{L^2(F)}^2, \nonumber \\
\hspace{10mm} \eta_{t,\partial}^2 &:= \sum_{F\in\mathcal{F}^\partial(\Omega_h)} h_F\norm{\gamma_t(\rho(\curl \bb u_h)\curl \bb u_h)}_{L^2(F)}^2.\nonumber
\end{align} Here, $\jump{\gamma_t(\bb v)}:=\bb n_1\times \bb v_1+\bb n_2\times \bb v_2$ and $\jump{\gamma_n(\bb v)}:=\bb n_1\cdot \bb v_1+\bb n_2\cdot \bb v_2$ denote the tangential and normal jump of $\bb v_1:= \bb v|_{K_1}$ and $\bb v_2:= \bb v|_{K_2}$ across a common face $F = K_1\cap K_2$ with exterior normals $\bb n_1, \bb n_2$.
%
%We remark that the geometric defect term given by \eqref{resDefn} can be made arbitrarily small. Indeed, since $\bb u_h \in C^1(I;\Xhz^{(k)})$ by Theorem \ref{dwfWP},
%\begin{align}
%\label{arbResEst}\Res(\bb u_h, \bb v_h; \Omega_h\setminus\mathring{\Omega}_h) &\leq \norm{(\bb f-\partial_t \bb u_h)(t)}_{L^q(\Omega_h\setminus\mathring{\Omega}_h)}\norm{\bb v_h}_{L^p(\Omega_h\setminus\mathring{\Omega}_h)} \\&\hskip 4mm+ \alpha\norm{\nabla \times \bb u_h(t)}_{L^p(\Omega_h\setminus\mathring{\Omega}_h)}^{p/q}\norm{\nabla\times\bb v_h}_{L^p(\Omega_h\setminus\mathring{\Omega}_h)}\nonumber\\
%&\leq M \norm{\bb v_h}_{W^\infty(\text{curl};\Omega_h)}\text{vol}(\Omega_h\setminus\mathring{\Omega}_h),\nonumber
%\end{align} where $M:= \max_{t\in I} \left\{\norm{(\bb f-\partial_t \tilde{\bb u}_h)(t)}_{L^q(\Omega_h)}+\alpha\norm{\nabla \times \tilde{\bb u}_h(t)}_{L^p(\Omega_h)}^{p/q}\right\}$.
\end{theorem} 

\begin{proof}
Since $\bb u_h\in \Xhz^{(k)}$ and $\Omega_h\subset \Omega$, we can extend by zero using Lemma \ref{lem:exten} so that $\tilde{\bb u}_h\in W^p_0(\text{curl};\Omega)$. It follows then for any $\bb v\in \Wzcurl$ and $\bb v_h=\bb v_h^0+\bb v_h^\partial \in \Xh^{(k)}$,
\begin{align}
& (\partial_t(\bb u-\tilde{\bb u}_h),\bb v)_{\Omega}+\left<\mathcal{P}(\bb u)-\mathcal{P}(\tilde{\bb u}_h),\bb v\right>_\Omega \nonumber \\
&= \underbrace{(\bb f,\bb v)_{\Omega}}_{\text{by Lemma \ref{ncTest}} }- \underbrace{\left[(\partial_t \tilde{\bb u}_h,\bb v)_{\Omega}+\left<\mathcal{P}(\tilde{\bb u}_h),\bb v\right>_\Omega\right]}_{= (\partial_t \bb u_h,\bb v)_{\Omega_h}+\left<\mathcal{P}(\bb u_h),\bb v\right>_{\Omega_h}} - \underbrace{\left[(\bb f-\partial_t \bb u_h,\bb v_h^0)_{\Omega_h}-\left<\mathcal{P}(\bb u_h),\bb v_h^0\right>_{\Omega_h}\right]}_{=0 \text{ by \eqref{DWF} since } \bb v_h^0\in \Xhz^{(k)}}\nonumber\\
&= (\bb f,\bb v)_{\Omega\setminus\Omega_h} + (\bb f-\partial_t \bb u_h,\bb v)_{\Omega_h}-\left<\mathcal{P}(\bb u_h),\bb v\right>_{\Omega_h} \nonumber\\
&\hskip 4mm - \left[(\bb f-\partial_t \bb u_h,\bb v_{h}-\bb v_h^\partial)_{\Omega_h}-\left<\mathcal{P}(\bb u_h),\bb v_{h}-\bb v_h^\partial\right>_{\Omega_h}\right]\nonumber\\
&=(\bb f,\bb v)_{\Omega\setminus\Omega_h}+(\bb f-\partial_t \bb u_h,\bb v-\bb v_{h})_{\Omega_h}-\left<\mathcal{P}(\bb u_h),\bb v- \bb v_{h}\right>_{\Omega_h}+\Res(\bb u_h, \bb v_h^\partial; \Omega_h\setminus\Omega_h^0) \label{ee0}
\end{align}
Since $p \geq 2$, the restriction $\bb v \in \Wcurlh \subset H(\text{curl};\Omega_h)$ and so we can set $\bb v_h$ to be the quasi-interpolant $\bb v_h := \Pi_h \bb v $ of Theorem \ref{quasiHyp}. Moreover, there exists $\phi \in H^1(\Omega_h)$ and $\bb w\in H^1(\Omega_h)^3$
for which $\bb v-\Pi_h \bb v=\grad\phi + \bb w$ and the estimates \eqref{quasiE1} and \eqref{quasiE2} hold. 
It remains to estimate $(\bb f-\partial_t \bb u_h,\bb v-\bb v_{h})_{\Omega_h}-\left<\mathcal{P}(\bb u_h),\bb v- \bb v_{h}\right>_{\Omega_h}$. For this, we apply Green's formula \eqref{tangTrace} and \eqref{normalTrace} to obtain
\begin{align}
&(\bb f-\partial_t \bb u_h,\bb v-\bb v_{h})_{\Omega_h}-\left<\mathcal{P}(\bb u_h),\bb v- \bb v_{h}\right>_{\Omega_h}\nonumber\\
&=\sum_{K\in\Tau_h} \rpairk{\bb f -\pdt\bb u_h,\grad \phi + \bb w} - \rpairk{\rho(\curl\bb u_h)\curl\bb u_h,\curl(\grad \phi + \bb w)} \nonumber\\
&=\sum_{K\in\Tau_h}\left[\frac{}{} \rpairk{\bb f -\pdt\bb u_h,\bb w}- \rpairk{\div(\bb f -\pdt\bb u_h),\phi} +\left(\gamma_n(\bb f -\pdt\bb u_h),\gamma_0(\phi)\right)_{\partial K} \right.\nonumber \\
&\hspace{15mm} \left.\frac{}{}- \rpairk{\curl(\rho(\curl\bb u_h)\curl\bb u_h),\bb w}-\left( \gamma_t(\rho(\curl\bb u_h)\curl\bb u_h),\gamma_0(\bb w)\right)_{\partial K}\right] \nonumber
\end{align} 
\begin{align}
&=\sum_{K\in\Tau_h} \left[\frac{}{}\rpairk{\bb f -\pdt\bb u_h-\curl (\rho(\curl\bb u_h)\curl\bb u_h),\bb w}-\rpairk{\div(\bb f -\pdt\bb u_h),\phi}\right]\nonumber \\
&\hspace{4mm}+\sum_{F\in \mathcal{F}^I(\Omega_h)} \left[\frac{}{}\right.\underbrace{\frac{}{}\rpaire{\frac{}{}\jump{\gamma_n(\bb f-\pdt\bb u_h)},\gamma_0(\phi)}}_{\substack{=\rpaire{\frac{}{}\jump{\gamma_n(-\pdt\bb u_h)},\gamma_0(\phi)},\\ \text{ since }\bb f(t)\in H(\text{div};\Omega)}} +  \rpaire{\frac{}{}\jump{\gamma_t(\rho(\curl\bb u_h)\curl\bb u_h)},\gamma_0(\bb w)}\left.\frac{}{}\right]\nonumber\\&\hspace{4mm}+\sum_{F\in\mathcal{F}^\partial(\Omega_h)}\left[\rpaire{\frac{}{}\gamma_n(\bb f-\partial_t \bb u_h),\gamma_0(\phi)}+\rpaire{\frac{}{}\gamma_t(\rho(\curl\bb u_h)\curl\bb u_h),\gamma_0(\bb w)}\right]\nonumber \\
&= \sum_{K\in\Tau_h} R_{K,i}(\bb u_h;\bb w) + R_{K,d}(\bb u_h;\phi) + \sum_{F\in\mathcal{F}^I(\Omega_h)} R_{F,n}(\bb u_h;\phi) + R_{F,t}(\bb u_h;\bb w)\hspace{10mm}\nonumber\\
&\hspace{54mm}+\sum_{F\in\mathcal{F}^\partial(\Omega_h)} R_{F,n}^\partial(\bb u_h;\phi) + R_{F,t}^\partial(\bb u_h;\bb w),  \label{residualE2}
\end{align} 
where the residuals are defined by 
\begin{align*}
R_{K,i}(\bb u_h;\bb w) & := \rpairk{\bb f -\pdt\bb u_h-\curl \left(\rho(\curl\bb u_h)\curl\bb u_h\right) ,  \bb w}, \\
R_{K,d}(\bb u_h;\phi) & := -\rpairk{\div(\bb f -\pdt\bb u_h),\phi},\\
R_{F,n}(\bb u_h;\phi) & := \rpaire{\jump{\gamma_n(-\pdt\bb u_h)},\gamma_0(\phi)},\\
R_{F,t}(\bb u_h;\bb w) & := \rpaire{\jump{\gamma_t(\rho(\curl\bb u_h)\curl\bb u_h)},\gamma_0(\bb w)},\\
R_{F,n}^\partial(\bb u_h;\phi) & := \rpaire{\gamma_n(\bb f-\pdt\bb u_h),\gamma_0(\phi)},\\
R_{F,t}^\partial(\bb u_h;\bb w) & := \rpaire{\gamma_t(\rho(\curl\bb u_h)\curl\bb u_h),\gamma_0(\bb w)}.
\end{align*}
Indeed, $R_{K,i}$ is the standard interior local residual term while $R_{F,n}$ and $R_{F,t}$ measure respectively the normal and tangential discontinuity of $\gamma_n(-\pdt\bb u_h)$ and $\gamma_t(\rho(\curl\bb u_h)\curl\bb u_h))$ across neighbouring elements. Moreover, $R_{F,n}^\partial$ and $R_{F,t}^\partial$ measures the boundary defects of $\gamma_n(\bb f-\pdt\bb u_h)$ and $\gamma_t(\rho(\curl\bb u_h)\curl\bb u_h)$ along the boundary faces of $\partial\Omega_h$. We observe that at each $t$, $\bb f \in H(\text{div}^0;\Omega)$ implies that the first
term in $R_{K,d}$ satisfies $ \rpairk{\div \bb f,\phi} = 0$ but the second term $\div \bb u_h$ vanishes only for first order N\'ed\'elec elements. Hence, the residual $R_{K,d}$ measures the defect in the divergence constraint at the discrete level, namely by
\begin{align*}
\sum_{K\in \Tau_h} R_{K,d}(\bb u_h;\phi) = \sum_{K\in \Tau_h} \rpairk{\div\pdt\bb  u_h,\phi}.
\end{align*}

Next, we proceed to estimate each term in the sum of \eqref{residualE2} by using Holder's inequality, \eqref{quasiE1}, and \eqref{quasiE2}. We use the convention that the constant $C$ may change from one line to the next and only depends on the shape-regularity of $\{\Tau_h\}_{h>0}$.
\begin{align}
\sum_{K\in\Tau_h}R_{K,i}(\bb u_h;\bb w) 
& \leq \sum_{K\in\Tau_h}\norm{ \bb f -\pdt \bb u_h-\curl \left(\rho(\curl\bb u_h)\curl\bb u_h\right)}_{L^2(K)}\norm{\bb w}_{L^2(K)}\nonumber\\ 
& \hspace{-18mm}\leq C\sum_{K\in\Tau_h}h_K \norm{ \bb f -\pdt \bb u_h-\curl\left(\rho(\curl\bb u_h)\curl\bb u_h\right)}_{L^2(K)}\norm{\curl \bb v}_{L^2(\omega_K)} \nonumber \\ 
&\hspace{-18mm}\leq C\left(\sum_{K\in\Tau_h} h_K^2\norm{ \bb f -\pdt \bb u_h-\curl \left(\rho(\curl\bb u_h)\curl\bb u_h\right)}_{L^2(K)}^2\right)^{1/2}\norm{\curl \bb v}_{L^2(\Omega)} \label{ee1}
\end{align}
To bound the $R_{K,d}$ terms, we proceed in the same way
\begin{align}
\sum_{K \in \Tau_h} R_{K,d}( \bb u_h; \bb w) & \leq \sum_{K \in \Tau_h} \norm{ \nabla \cdot \partial_t \bb u_h }_{L^2(K)} \norm{\phi}_{L^2(K)} \nonumber \\
& \leq C \sum_{K \in \Tau_h} h_K \norm{\nabla \cdot \partial \bb u_h}_{L^2(K)} \norm{ \bb v }_{L^2(\omega_K)} \nonumber \\
& \leq C \Big( \sum_{K \in \Tau_h} h_K^2 \norm{\nabla \cdot \partial \bb u_h}_{L^2(K)}^2 \Big)^{1/2} \norm{\bb v}_{L^2(\Omega)} . \label{ee2}
\end{align}
For the $R_{F,t}(\bb u_h;\bb w)$ terms, we begin with a stability estimate. Using \eqref{scaledtraceThm} and $h_F\simeq h_K$ 
for shape-regular $\{\Tau_h\}_{h>0}$, 
we find
\begin{align*}
\norm{\gamma_0(\bb w)}_{L^2(F)} &\leq C\left(h_F^{-1}\norm{\bb w}_{L^2(K)}^2+h_F \norm{\nabla\bb w}_{L^2(K)}^2\right)^{1/2} \\
& \leq C\left(h_F^{-1}h_K^2\norm{\curl\bb v}_{L^2(\omega_K)}^2+h_F \norm{\curl\bb v}_{L^2(\omega_K)}^2\right)^{1/2}\\
& \leq Ch_F^{1/2}\norm{\curl\bb v}_{L^2(\omega_K)}.
\end{align*}
Employing this last estimate, we obtain
\begin{align}
\sum_{F\in\mathcal{F}^I(\Omega_h)} R_{F,t}(\bb u_h;\bb w) & \leq \sum_{F\in\mathcal{F}^I(\Omega_h)} \norm{\jump{\gamma_t(\rho(\curl\bb u_h)\curl\bb u_h)}}_{L^2(F)}\norm{\gamma_0(\bb w)}_{L^2(F)} \nonumber \\
& \hspace{-16mm}\leq C\sum_{F\in\mathcal{F}^I(\Omega_h)} h_F^{1/2}\norm{\jump{\gamma_t(\rho(\curl\bb u_h)\curl\bb u_h)}}_{L^2(F)}\norm{\curl \bb v}_{L^2(\omega_K)} \nonumber \\
& \hspace{-16mm} \leq C\left(\sum_{F\in\mathcal{F}^I(\Omega_h)} h_F\norm{\jump{\gamma_t(\rho(\curl\bb u_h)\curl\bb u_h)}}_{L^2(F)}^2\right)^{1/2}\norm{\curl\bb v}_{L^2(\Omega)}. \label{ee3}
\end{align}
We can bound the boundary terms $R_{F,t}^\partial(\bb u_h;\bb w)$ in the same manner and obtain
\begin{align}
\sum_{F\in\mathcal{F}^\partial(\Omega_h)} R_{F,t}^\partial(\bb u_h;\bb w) &\nonumber\\
&\hspace{-16mm}\leq C\left(\sum_{F\in\mathcal{F}^\partial(\Omega_h)} h_F\norm{\gamma_t(\rho(\curl\bb u_h)\curl\bb u_h)}_{L^2(F)}^2\right)^{1/2}\norm{\curl\bb v}_{L^2(\Omega)}. \label{ee4}
\end{align}
Similarly to the previous stability estimate, using \eqref{scaledtraceThm} and the shape-regularity of $\{\Tau_h\}_{h>0}$, one can show that
\begin{align*}
\norm{\gamma_0(\phi)}_{L^2(F)} &\leq Ch_F^{1/2}\norm{\bb v}_{L^2(\omega_K)}.
\end{align*}
Applying this to the $R_{F,n}(\bb u_h;\phi)$ term, one finds
\begin{align}
\sum_{F\in\mathcal{F}^I(\Omega_h)} R_{F,n}(\bb u_h;\phi) & \leq \sum_{F\in\mathcal{F}^I(\Omega_h)} \norm{\jump{\gamma_n(\pdt\bb u_h)}}_{L^2(F)}\norm{\gamma_0(\phi)}_{L^2(F)} \nonumber \\
& \leq C\sum_{F\in\mathcal{F}^I(\Omega_h)} h_F^{1/2}\norm{\jump{\gamma_n(\pdt\bb u_h)}}_{L^2(F)}\norm{\bb v}_{L^2(\omega_K)} \nonumber \\
& \leq C\left(\sum_{F\in\mathcal{F}^I(\Omega_h)} h_F\norm{\jump{\gamma_n(\pdt\bb u_h)}}_{L^2(F)}^2\right)^{1/2}\norm{\bb v}_{L^2(\Omega)}. \label{ee5}
\end{align}
Similarly, we can bound the boundary terms $R_{F,n}^\partial(\bb u_h;\phi)$ and obtain
\begin{align}
\sum_{F\in\mathcal{F}^\partial(\Omega_h)} R_{F,n}^\partial(\bb u_h;\phi) \leq C\left(\sum_{F\in\mathcal{F}^\partial(\Omega_h)} h_F\norm{\gamma_n(\bb f- \pdt\bb u_h)}_{L^2(F)}^2\right)^{1/2}\norm{\bb v}_{L^2(\Omega)}. \label{ee6}
\end{align}
Thus, combining \eqref{ee0}-\eqref{ee6}, we have shown the desired result.
\end{proof}

Now we show the a posteriori error estimators in Theorem \ref{thm:main_thm1} are reliable in the following sense. 

%******************************************
\begin{theorem} \label{thm:main_thm2}
Let $\bb u$, $\bb u_h$ and $\bb f$ be as stated in Theorem \ref{thm:main_thm1} and denote the error as $\bb e:=\bb u-\tilde{\bb u}_h$ and $\bb e_0=\bb e|_{t=0}$. Then there exists some positive constants $C_1(p,\alpha)$ and $C_2(p,T)$ such that,
\begin{align*}
&\sup_{s\in[0,T]}\norm{\bb e(s)}^2_{L^2(\Omega)}+C_1\int_0^T\norm{\nabla \times\bb e(s)}^p_{L^p(\Omega)}ds \\
&\hspace{4mm}\leq C_2\left(\frac{}{}\int_0^T \NC_1(\bb f(s); \Omega\setminus \Omega_h)^2+\NC_2(\bb f(s),\bb u_h(s); \Omega_h\setminus \Omega_h^0)^{2} ds\right. \nonumber\\
&\hspace{16mm} \left.+\norm{\bb e_0}^2_{L^2(\Omega)}+\int_0^T \big(\eta_d^2(s)+\eta_n^2(s)+\eta_{n,\partial}^2(s)+\eta_i^q(s)+\eta_t^q(s)+\eta_{t,\partial}^q(s)\big)ds\right) ,
\end{align*} where $\NC_1$ and $\NC_2$ are non-conforming geometric errors defined as,
\begin{align*}
&\NC_1^2\equiv\NC_1(\bb f(t); \Omega\setminus \Omega_h)^2 := \norm{\bb f(t)}_{L^2(\Omega\setminus \Omega_h)}^2, \\
&\NC_2^2\equiv\NC_2(\bb f(t), \bb u_h(t); \Omega_h\setminus \Omega_h^0)^2 := \norm{\bb f(t)-\partial_t \bb u_h(t)}_{L^2(\Omega_h\setminus \Omega_h^0)}^2+h_\partial^\frac{2p}{p-2}\\
&\hskip 60mm + \alpha \norm{\nabla \times \bb u_h(t)}^{p}_{L^{2(p-1)}(\Omega_h\setminus \Omega_h^0)},
\end{align*} 
with $\displaystyle h_\partial = \max_{\substack{K\in \mathcal{T}_h\\ K \subset \Omega_h\setminus \Omega_h^0}} h_K$.
\end{theorem} 

Above, $\NC_1$ and $\NC_2$ are called non-conforming geometric errors since $\NC_1 = \mathcal{O}(\operatorname{vol}(\Omega\setminus \Omega_h))$ and $\NC_1 = \mathcal{O}(\operatorname{vol}(\Omega_h\setminus \Omega_h^0))$. Specifically, $\NC_1$ measures with the geometric defect of $\bb f$ between the embedded polyhedral $\Omega_h$ domain and the $C^{1,1}$ domain, while $\NC_2$ arises from the boundary data defect of $\bb e$ along $\partial \Omega_h$.

%******************************************

\begin{proof}
Since $\tilde{\bb u}_h \in \Wzcurl$, setting $\bb v=\bb e \in \Wzcurl$ in \eqref{resEstEqn} gives,
\begin{align}
&\frac{d}{dt}\frac{1}{2}\norm{\bb e}_{L^2(\Omega)}^2 + \pair{\mathcal{P}(\bb u)-\mathcal{P}(\tilde{\bb u}_h),\bb e}  \label{eq:resEstEqn} \\          
& \hspace{16mm} \leq \left(\bb f,\bb e\right)_{\Omega\setminus \Omega_h} + \Res(\bb u_h,(\Pi_h\bb e)^\partial; \Omega_h\setminus \Omega_h^0) \nonumber \\
&   \hspace{20mm} + C \big( (\eta_d+\eta_n+\eta_{n,\partial}) \norm{\bb e}_{L^2(\Omega)}+ (\eta_i+\eta_t+\eta_{t,\partial}) \norm{\curl\bb e}_{L^2(\Omega)} \big) \nonumber
\end{align}
We proceed to estimate each term on both sides of \eqref{eq:resEstEqn}.
First, we can bound from below the second term on the left hand side of \eqref{eq:resEstEqn} by the following inequality \cite[eqn 24]{Cho89}, where for some $C_p>0$,
\begin{equation*}
C_p|\bb x-\bb y|^p\leq(|\bb x|^{p-2}\bb x-|\bb y|^{p-2}\bb y)\cdot(\bb x-\bb y), \quad \forall \bb x,\bb y \in \mathbb{R}^d.
\end{equation*}
Thus, setting $\bb x=\curl \bb u,$ $\bb y=\curl \tilde{\bb u}_h$ and integrating the above inequality above gives the coercivity estimate
\begin{equation}
C_p \alpha \norm{\curl \bb e}_{L^p(\Omega)}^p\leq \pair{\mathcal{P}(\bb u)-\mathcal{P}(\tilde{\bb u}_h),\bb e} . \label{coercivity}
\end{equation} 
Second, to bound from above the residual term on the right hand side of \eqref{eq:resEstEqn}, note that $(\cdot)^\partial: \Xh^{(k)}\rightarrow \Xh^{(k)}$ is a projection on a finite dimensional space and thus is bounded on $H(\text{curl};\Omega_h)$ with the operator norm $C_\partial$. Moreover, by \eqref{quasiE1} and \eqref{quasiE2}, there are constants $c_1, c_2>0$ such that for each $K\in \Tau_h$,
\begin{align}
\norm{\Pi_h \bb e}_{L^2(K)} &\leq \norm{\bb e}_{L^2(K)} +\norm{\Pi_h \bb e-\bb e}_{L^2(K)} \label{eq:SchEst1}\\
&\leq \norm{\bb e}_{L^2(K)}+\norm{\nabla \phi}_{L^2(K)}+\norm{\bb w}_{L^2(K)} \nonumber\\
&\leq c_1(\norm{\bb e}_{L^2(\omega_K)}+h_K\norm{\nabla \times \bb e}_{L^2(\omega_K)})  \nonumber\\
\norm{\nabla \times\Pi_h \bb e}_{L^2(K)} &\leq \norm{\nabla \times\bb e}_{L^2(K)} +\norm{\nabla \times(\Pi_h \bb e-\bb e)}_{L^2(K)} \label{eq:SchEst2}\\
&\leq \norm{\nabla \times \bb e}_{L^2(K)}+\norm{\nabla \times \nabla \phi}_{L^2(K)}+\norm{\nabla \times \bb w}_{L^2(K)} \nonumber \\
&\leq  c_2 \norm{\nabla \times \bb e}_{L^2(\omega_K)} \nonumber
\end{align} And since each $K$ overlaps with finitely many $\omega_K$, $\Pi_h: H(\text{curl};\Omega_h) \rightarrow \Xh^{(k)}$ is bounded as \eqref{eq:SchEst1} and \eqref{eq:SchEst2} implies for some positive constant $C$,
\begin{align}
\norm{\Pi_h \bb e}_{L^2(\Omega_h\setminus \Omega_h^0)}^2 &= \sum_{\substack{K\in \Tau_h\\ K\subset \Omega_h\setminus \Omega_h^0}} \norm{\Pi_h \bb e}_{L^2(K)}^2 \label{eq:SchBdryEst1} \\
&\leq 2c_1^2 \sum_{\substack{K\in \Tau_h\\ K\subset \Omega_h\setminus \Omega_h^0}} \left(\norm{\bb e}_{L^2(\omega_K)}^2+h_K^2\norm{\nabla \times \bb e}_{L^2(\omega_K)}^2 \right) \nonumber\\
& \leq C\left(\norm{\bb e}_{L^2(\Omega_h\setminus \Omega_h^0)}^2+h_\partial^2\norm{\nabla\times \bb e}_{L^2(\Omega_h\setminus \Omega_h^0)}^2\right) \nonumber \\
\norm{\nabla\times\Pi_h \bb e}_{L^2(\Omega_h\setminus \Omega_h^0)}^2 & 
          \leq c_2^2\sum_{\substack{K\in \Tau_h\\ K\subset \Omega_h\setminus \Omega_h^0}} \norm{\nabla \times \bb e}_{L^2(\omega_K)}^2 
          \leq C\norm{\nabla \times \bb e}_{L^2(\Omega_h\setminus \Omega_h^0)}^2 \label{eq:SchBdryEst2}
\end{align}
Using \eqref{eq:SchBdryEst1}, the first residual term in \eqref{resDefn} can be bounded above with the help of Young's inequality for any $\epsilon>0$ and some positive constant $C$,
\begin{align*}
&\left|(\bb f-\partial_t \bb u_h, (\Pi_h\bb e)^\partial)_{\Omega_h\setminus\Omega_h^0}\right|\leq C_\partial\norm{\bb f-\partial_t \bb u_h}_{L^2(\Omega_h\setminus \Omega_h^0)}\norm{\Pi_h\bb e}_{L^2(\Omega_h\setminus \Omega_h^0)}\\
& \hskip 2mm\leq C\left( \norm{\bb f-\partial_t \bb u_h}_{L^2(\Omega_h\setminus \Omega_h^0)}^2+\norm{\bb e}_{L^2(\Omega_h\setminus \Omega_h^0)}^2+h_\partial^2\norm{\nabla\times \bb e}_{L^2(\Omega_h\setminus \Omega_h^0)}^2\right)\\
%& \hskip 2mm\leq \frac{C_\partial}{2}\left( \norm{\bb f-\partial_t \bb u_h}_{L^2(\Omega_h\setminus \Omega_h^0)}^2+\norm{\bb e}_{L^2(\Omega_h\setminus \Omega_h^0)}^2+\frac{1}{q'\epsilon^{q'}}h_\partial^{2q'}+\frac{2\epsilon^{\frac{p}{2}}}{p}\norm{\nabla\times \bb e}_{L^2(\Omega_h\setminus \Omega_h^0)}^p\right) \\
& \hskip 2mm\leq C\left( \norm{\bb f-\partial_t \bb u_h}_{L^2(\Omega_h\setminus \Omega_h^0)}^2+\norm{\bb e}_{L^2(\Omega)}^2+\frac{1}{q'\epsilon^{q'}}h_\partial^{2q'}+\frac{2\epsilon^{\frac{p}{2}}C_{q',h}^p}{p}\norm{\nabla\times \bb e}_{L^p(\Omega)}^p\right)
\end{align*}
where $q' = \frac{p}{p-2}$ and the last inequality follows from 
$\| \bb e \|_{L^2(\Omega_h\setminus \Omega_h^0)} \leq \| \bb e \|_{L^2(\Omega_h)} \leq \| \bb e \|_{L^2(\Omega)}$ and 
$\| \curl \bb e \|_{L^2(\Omega_h\setminus \Omega_h^0)} \leq C_{q',h} \| \curl \bb e \|_{L^p(\Omega_h\setminus \Omega_h^0)} 
    \leq C_{q',h} \| \curl \bb e \|_{L^p(\Omega)}$ with $C_{q',h} = \text{vol}(\Omega_h\setminus \Omega_h^0)^{\frac{1}{2q'}}$.
Similarly, using \eqref{eq:SchBdryEst2}, for an arbitrary positive $\epsilon$, the second residual term in \eqref{resDefn} can be bounded above as
\begin{align*}
& \left|\langle\mathcal{P}(\bb u_h), (\Pi_h \bb e)^\partial\rangle_{\Omega_h\setminus\Omega_h^0}\right| \leq \alpha \int_{\Omega_h\setminus \Omega_h^0} |\nabla\times \bb u_h|^{p-1}|(\nabla \times \Pi_h \bb e)^{\partial}|dV \\
& \hskip 2mm\leq \alpha \| |\nabla \times \bb u_h|^{p-1} \|_{L^{2}(\Omega_h\setminus\Omega_h^0)}\|(\nabla \times \bb e)^\partial\|_{L^{2}(\Omega_h\setminus\Omega_h^0)}\\
& \hskip 2mm\leq \alpha C_\partial \| \nabla \times \bb u_h\|_{L^{2(p-1)}(\Omega_h\setminus\Omega_h^0)}^{p-1}\|\nabla \times \bb e\|_{L^{2}(\Omega_h\setminus\Omega_h^0)} \\
%& \hskip 2mm\leq \alpha C_\partial \left( \frac{1}{q\epsilon^q} \| \nabla \times \bb u_h\|_{L^{2(p-1)}(\Omega_h\setminus\Omega_h^0)}^{(p-1)q}+ \frac{\epsilon^{p}}{p}\norm{\nabla\times \bb e}_{L^2(\Omega_h\setminus\Omega_h^0)}^p\right)\\
& \hskip 2mm\leq \alpha C_\partial \left( \frac{1}{q\epsilon^q} \| \nabla \times \bb u_h\|_{L^{2(p-1)}(\Omega_h\setminus\Omega_h^0)}^{p}+ \frac{\epsilon^{p}C_{q,h}^p}{p}\norm{\nabla\times \bb e}_{L^p(\Omega)}^p\right),
\end{align*}where $(p-1)q = p$ and the last step follows from $\| \curl \bb e \|_{L^2(\Omega_h\setminus \Omega_h^0)} \leq C_{q,h} \| \curl \bb e \|_{L^p(\Omega)}$ with $C_{q,h} = \text{vol}(\Omega_h\setminus \Omega_h^0)^{\frac{1}{2q}}$.
Combining these two estimates for the residual of \eqref{resDefn}, we have for $0<\epsilon<1$ that for some positive constant $C'$ depending on $\text{vol}(\Omega_h\setminus \Omega_h^0), p, C, C_\partial, \epsilon$ and some positive constant $C''$ depending on $\text{vol}(\Omega_h\setminus \Omega_h^0), \alpha, p, C_\partial$ but not $\epsilon$,
\begin{align*}
\left|\Res(\bb u_h,(\Pi_h\bb e)^\partial; \Omega_h\setminus \Omega_h^0)\right| \leq C'\NC_2^2+C''\left(\norm{\bb e}_{L^2(\Omega)}^2+\epsilon^{\frac{p}{2}}\norm{\nabla\times \bb e}_{L^p(\Omega)}^p\right)
\end{align*}
Finally, combining with \eqref{coercivity}, \eqref{eq:resEstEqn} becomes,
\begin{align} \label{ineq1}
 &\frac{d}{dt}\frac{1}{2}\norm{\bb e}_{L^2(\Omega)}^2+C_p\alpha \norm{\curl \bb e}_{L^p(\Omega)}^p \\
 &\hspace{4mm}\leq \NC_1^2+C'\NC_2^2+C''\left(\norm{\bb e}_{L^2(\Omega)}^2+\epsilon^{\frac{p}{2}}\norm{\nabla\times \bb e}_{L^p(\Omega)}^p\right)\nonumber\\
 &\hspace{8mm} + C\left(\frac{1}{2}\left(\eta_d^2+\eta_n^2+\eta_{n,\partial}^2\right) + \frac{3}{2}\norm{\bb e}^2_{L^2(\Omega)}\right.\nonumber \\
 &\hspace{18mm} \left.+\frac{1}{q\epsilon^q}(\eta_i^q+\eta_t^q+\eta_{t,\partial}^q)+\frac{3\epsilon^p}{p}\norm{\curl \bb e}^p_{L^p(\Omega)}\right)\nonumber
\end{align} 
Thus, for sufficiently small $\epsilon$, inequality \eqref{ineq1} implies that there exists positive constants $C_1(p,\alpha,\epsilon)$ and $a(C,C',C'',p,\epsilon)$ for which
%for $C_1:=2(C_p-C\epsilon^p/p)>0$ and $a:=C\max \{1,2/q\epsilon^q\}$,
\begin{align*}
\frac{d}{dt}\norm{\bb e}_{L^2(\Omega)}^2+C_1\norm{\curl \bb e}_{L^p(\Omega)}^p &\leq a\left(\NC_1^2 +\NC_2^2\right.\\
&\hskip 10mm\left.\norm{\bb e}^2_{L^2(\Omega)}+\eta_d^2+\eta_n^2+\eta_{n,\partial}^2+\eta_i^q+\eta_t^q+\eta_{t,\partial}^q\right).
\end{align*} So multiplying by $e^{-at}$ and integrating yields
\begin{align}
&\norm{\bb e(t)}_{L^2(\Omega)}^2+C_1\int_0^t  e^{a(t-s)}\norm{\curl \bb e(s)}_{L^p(\Omega)}^p ds \nonumber\\ 
 &\hspace{5mm} \leq e^{a t}\norm{\bb e_0}^2_{L^2(\Omega)}+a\int_0^t e^{a(t-s)}\left(\NC_1^2(s)+\NC_2^{2}(s)\right.\nonumber\\
  &\hspace{45mm} +\left.\eta_d^2(s)+\eta_n^2(s)+\eta_{n,\partial}^2(s)+\eta_i^q(s)+\eta_t^q(s)+\eta_{t,\partial}^q(s)\right)ds. \nonumber \\
&\Rightarrow \norm{\bb e(t)}_{L^2(\Omega)}^2+C_1\int_0^t  \norm{\curl \bb e(s)}_{L^p(\Omega)}^p ds  \nonumber\\
&\hspace{5mm} \leq C_2\left(\frac{}{}\int_0^T\NC_1^2(s) +\NC_2^{2}(s)ds +\norm{\bb e_0}^2_{L^2(\Omega)} \right. \nonumber\\
&\hspace{16mm} \left.+\int_0^T \big(\eta_d^2(s)+\eta_n^2(s)+\eta_{n,\partial}^2(s)+\eta_i^q(s)+\eta_t^q(s)+\eta_{t,\partial}^q(s)\big)ds\right) \label{ineq2}
\end{align} since $1\leq e^{a(t-s)} \leq e^{aT}$ for $0\leq s \leq t \leq T$ with $C_2(p,T)=\max\{1,a\}e^{a T}$. Taking the supremum over all $t\in [0,T]$ of equation \eqref{ineq2} gives the desired result.
\end{proof}

%****************************
\section{A posteriori error estimate for AC loss}
\label{sec:ACloss}

For many engineering applications, the quantity of interest is the AC loss over one period $T$,
$$ 
        Q(\bb u):=\frac{1}{T}\int_0^T\norm{\curl \bb u(s)}_{L^p(\Omega)}^p ds. 
$$
In particular, we wish to derive a posteriori error estimates for $|Q(\bb u)-Q(\boldsymbol u_h)|$. To do this, we first derive the following elementary estimate and subsequently use it to show the error for $Q$ is related to the a posteriori error estimates derived previously.
%***************
\begin{lemma}Assume $1\leq p$ and $M>0$, then for any functions $x:[0,T]\rightarrow [0,M]$, $y:[0,T]\rightarrow [0,M]$, we have 
\begin{align*}
\int_0^T|x(t)^p-y(t)^p| dt\leq p T^{1-\frac{1}{p}}M^{p-1}\left(\int_0^T|x(t)-y(t)|^p\right)^{1/p} .
\end{align*} \label{p4}
\end{lemma}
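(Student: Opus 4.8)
The plan is to reduce the statement to a pointwise inequality followed by a single application of H\"older's inequality. First I would record the elementary bound: for $a,b\in[0,M]$,
\[
  |a^p-b^p|\le pM^{p-1}|a-b|.
\]
This follows from the identity $a^p-b^p=\int_b^a ps^{p-1}\,ds$ (equivalently, the mean value theorem applied to $s\mapsto s^p$): since $p\ge1$, the function $s\mapsto s^{p-1}$ is nondecreasing, hence bounded by $M^{p-1}$ on the interval between $a$ and $b$, which is contained in $[0,M]$.

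Applying this with $a=x(t)$ and $b=y(t)$ at each $t\in[0,T]$, and using the uniform bounds $0\le x(t),y(t)\le M$, I obtain
\[
  |x(t)^p-y(t)^p|\le pM^{p-1}|x(t)-y(t)|,\qquad t\in[0,T].
\]
Integrating over $[0,T]$ and applying H\"older's inequality with conjugate exponents $p$ and $p/(p-1)$ to the right-hand side gives
\[
  \int_0^T|x(t)-y(t)|\,dt\le\Big(\int_0^T|x(t)-y(t)|^p\,dt\Big)^{1/p}\Big(\int_0^T 1\,dt\Big)^{1-1/p}=T^{1-1/p}\Big(\int_0^T|x(t)-y(t)|^p\,dt\Big)^{1/p},
\]
and combining the last two displays yields the claimed estimate. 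When $p=1$ the inequality becomes an equality after these steps, since $M^{p-1}=1$ and $T^{1-1/p}=1$.

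There is essentially no serious obstacle in this argument; it is a routine interpolation-type estimate. The only points requiring minor care are that $p\ge1$ is invoked twice—once to bound $s^{p-1}$ by $M^{p-1}$ via monotonicity, and once to guarantee that $p$ and $p/(p-1)$ form a valid conjugate pair for H\"older—and that positivity of $x,y$ keeps all the quantities in $[0,M]$, the range where these bounds are valid.
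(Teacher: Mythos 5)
Your proof is correct and follows essentially the same route as the paper: a pointwise bound $|x(t)^p-y(t)^p|\le pM^{p-1}|x(t)-y(t)|$ via the mean value theorem (the paper) or the equivalent integral identity (your version), followed by integration and H\"older's inequality with exponents $p$ and $p/(p-1)$. Your write-up is slightly more explicit about where $p\ge1$ and the bound $M$ are used, but there is no substantive difference.
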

%*************
\begin{proof}
For any $t\in [0,T]$, applying the mean value theorem for the function $f(z)=z^p$ on $[0,M]$ implies there exists $\xi(t)\in (0,M)$ satisfying
\begin{align*}
|x(t)^p-y(t)^p|=|x(t)-y(t)|\cdot p\xi(t)^{p-1}\leq pM^{p-1}|x(t)-y(t)| .
\end{align*}Thus, integrating over $[0,T]$ gives,
\begin{align*}
\int_0^T |x(t)^p-y(t)^p|dt&\leq pM^{p-1}\int_0^T|x(t)-y(t)| dt \\
&\leq p T^{1-\frac{1}{p}}M^{p-1}\left(\int_0^T|x(t)-y(t)|^p\right)^{1/p} .
\end{align*}
\end{proof}

% stability bound for weak/discrete form

%*******************
\begin{theorem} Let $\bb u$, $\bb u_h$, the error $\bb e, \bb e_0$ and positive constants $C_1, C_2$ be as stated in Theorem \ref{thm:main_thm1}. Let $M$ be the maximum of the stability bounds for the weak formulation \eqref{WF} and \eqref{DWF} given by equations \eqref{ineq:stability-continuous} and \eqref{stabEst2}. Then, the following inequality holds.
\begin{align*}
&|Q(\bb u)-Q(\tilde{\bb u}_h)|\nonumber\\
&\hspace{2mm}\leq pT^{-\frac{1}{p}}M^{p-1}\left(\int_0^T\norm{\curl \bb e(s)}_{L^p{(\Omega})}^p ds\right)^{1/p} \\
&\hspace{2mm}\leq C\left(\int_0^T \NC_1(\bb f(s); \Omega\setminus \Omega_h)^2+\NC_2(\bb f(s),\bb u_h(s); \Omega_h\setminus \Omega_h^0)^{2} ds\right.\\
&\left.\hspace{11mm}+\norm{\bb e_0}^2_{L^2(\Omega)}+\int_0^T \big(\eta_d^2(s)+\eta_n^2(s)+\eta_{n,\partial}^2(s)+\eta_i^q(s)+\eta_t^q(s)+\eta_{t,\partial}^q(s)\big)ds\right)^{1/p},
\end{align*} where $C:=\frac{C_2}{C_1}pT^{-\frac{1}{p}}M^{p-1}$.
\end{theorem}
%********************
\begin{proof} Let $x(t):=\norm{\curl \bb u}_{L^p(\Omega)}$ and $y(t):=\norm{\curl \tilde{\bb u}_h}_{L^p(\Omega)}$,
which we know are bounded by inequalities \eqref{ineq:stability-continuous} and \eqref{stabEst2}. 

Since $0\leq \norm{\curl\bb u}_{L^p(\Omega)}\leq M$, $0\leq\norm{\curl\tilde{\bb u}_h}_{L^p(\Omega)}\leq M$ for $0\leq t\leq T$, Lemma \ref{p4} implies,
 \begin{align}
|Q(\bb u)-Q(\boldsymbol u_h)|&=\frac{1}{T}\left|\int_0^T (x(t)^p-y(t)^p)dt\right| \leq \frac{1}{T}\int_0^T \left|x(t)^p-y(t)^p\right|dt \nonumber \\
&\leq pT^{-\frac{1}{p}}M^{p-1}\left(\int_0^T|x(t)-y(t)|^p dt\right)^{1/p} \label{e10}
\end{align}
Since $|x(t)-y(t)|=\left|\norm{\curl \bb u}_{L^p(\Omega)}-\norm{\curl \tilde{\bb u}_h}_{L^p(\Omega)}\right|\leq \norm{\curl \bb e}_{L^p(\Omega)}$, then by monotonicity of $f(z)=z^p$, we have $|x(t)-y(t)|^p\leq \norm{\curl \bb e}_{L^p(\Omega)}^p$.
Thus, again by monotonicity of $f(z)=z^{1/p}$,
\begin{align} 
\left(\int_0^T |x(t)-y(t)|^p dt\right)^{1/p}\leq \left(\int_0^T\norm{\curl \bb e(s)}_{L^p(\Omega)}^p ds\right)^{1/p} \label{e11}
\end{align} Combining inequalities \eqref{e10}, \eqref{e11} and Theorem \ref{thm:main_thm2} yield the desired result.
\end{proof}

\section{Numerical results}

We present numerical results in 2D supporting the reliability of the error estimators presented in Section \ref{sec:apee}. In the following, the $p$-curl problem is discretized in space using first order N\'ed\'elec elements and in time using the backward Euler method. While higher order time stepping schemes can be used, the discretization error is shown to be dominated by the spatial errors due to the low order approximation of first order N\'ed\'elec elements. 
The fully discrete formulation was implemented in Python using the FEniCS package \cite{Fenics15}. For simplicity, we have scaled the units such that the material parameter $\param$ is set to unity. 

%When estimating the error, we will not be evaluating the two non-conformity terms over the smooth $C^{1,1}$ domain as we do not have access to the exact solution over that domain. In 2D, it will be clear that the approximation of the disc by a polyhedral domain will be quite good and therefore that the geometric non-conformity terms will be negligible.

\subsection{Numerical verification of first order convergence}
\label{sec:numVer}
We verify numerically first order convergence on the unit circle for a smooth radially symmetric solution $\boldsymbol u(r,t) = r^a t^b  \hat{\boldsymbol \phi}$ with the forcing term $\boldsymbol f(r,t) = (b r^a t^{b-1}-((a+1)t^b)^{p-1}r^{(a-1)(p-1)-1})\hat{\boldsymbol \phi}.$ Specifically, the constants $a, b>0$ are parameters to be chosen, $r$ is the radial cylindrical coordinate and $\hat{\boldsymbol \phi}$ is the azimuthal unit vector. Note that by radial symmetry, $\boldsymbol u(r,t)$ is necessarily divergence-free. For these tests, we have fixed $p=5$ and the final time $T$=5e-3.

\begin{figure}[h!]
\centering
	\subfloat[$a=2,b=1$]{\label{figs:a2b1}\includegraphics[width=0.45\textwidth]{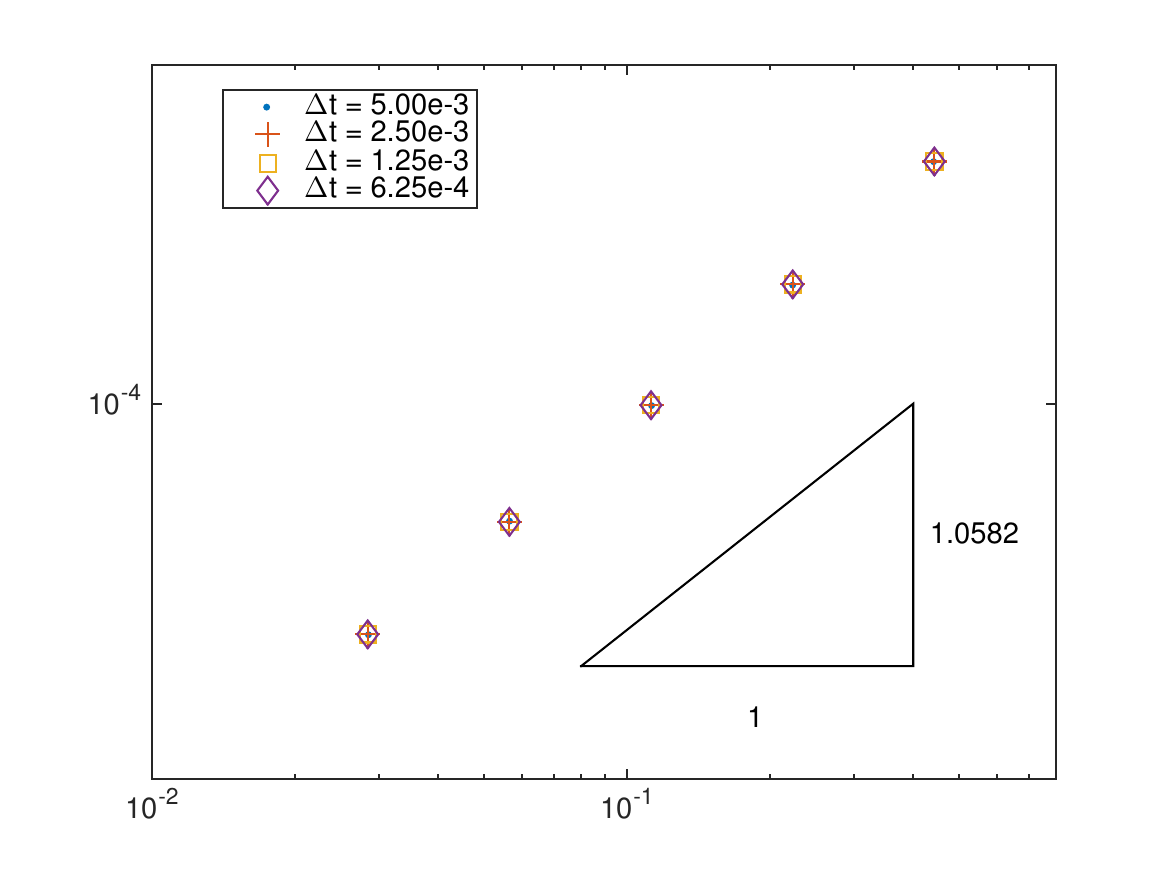}}
	\subfloat[$a=1,b=2$]{\label{figs:a1b2}\includegraphics[width=0.45\textwidth]{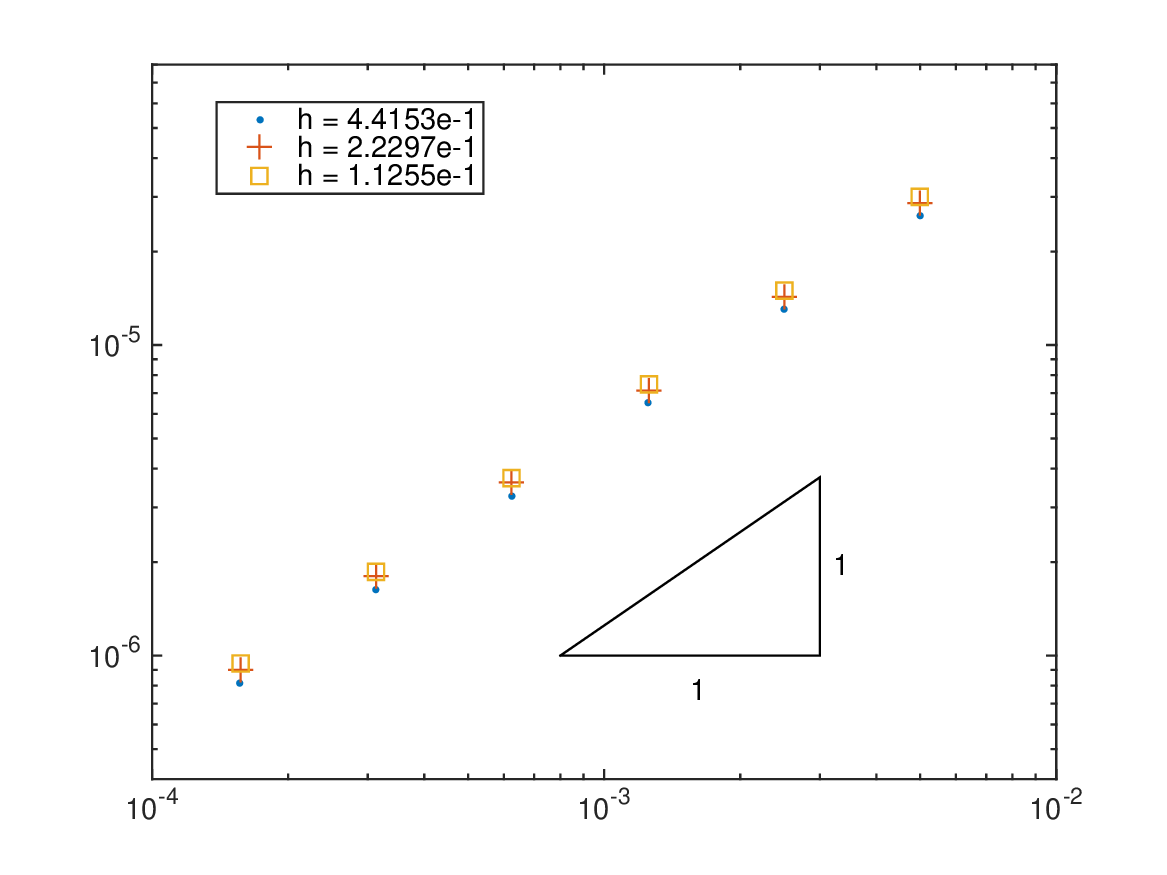}}
	\caption{Plot of $\norm{\bb u - \bb u_h}_{L^2(\Omega)}$ vs $h$ and vs $\Delta t$, respectively.}
\end{figure}

\begin{figure}[h!]
\centering
	\subfloat[$\norm{\bb u - \bb u_h}_{L^2(\Omega)}$ vs $\Delta t$]{\label{figs:a2b2vsdt}\includegraphics[width=0.45\textwidth]{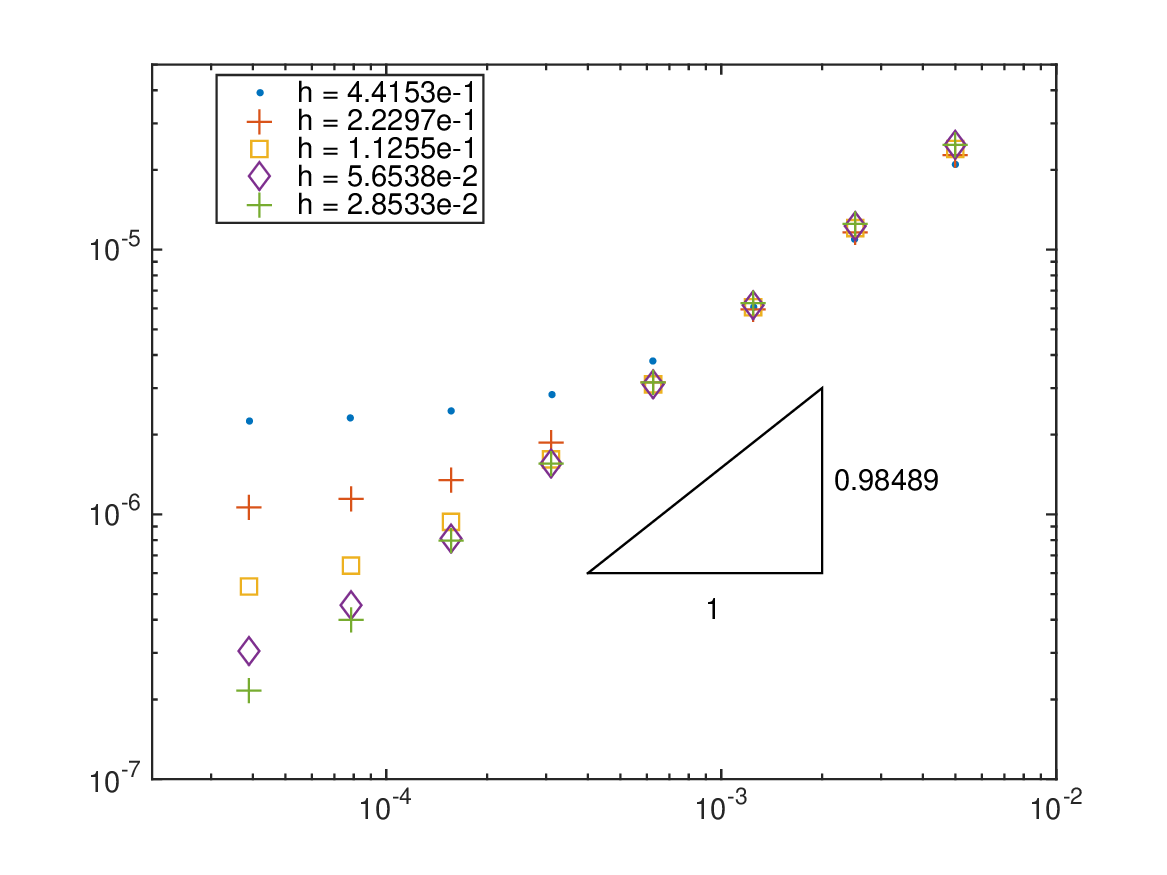}}
	\subfloat[$\norm{\bb u - \bb u_h}_{L^2(\Omega)}$ vs $h$]{\label{figs:a2b2vsh}\includegraphics[width=0.45\textwidth]{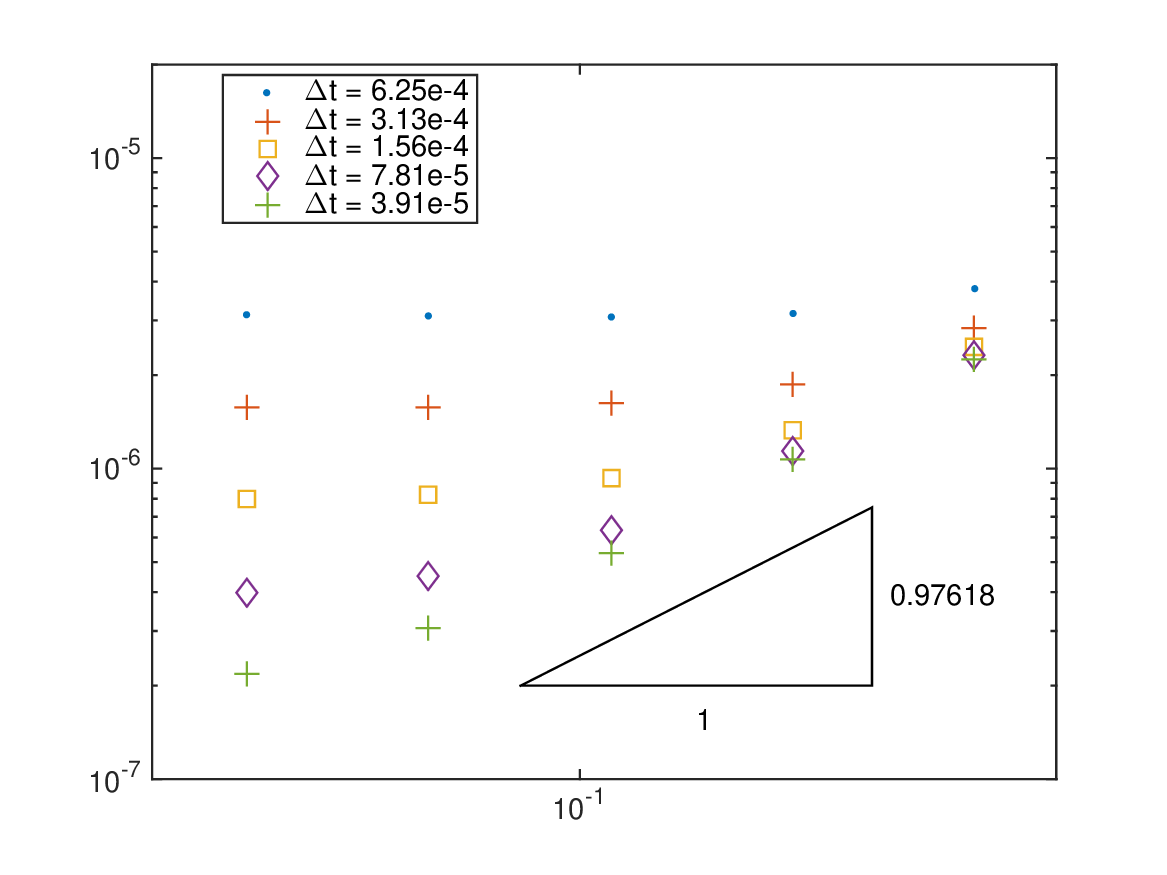}}
	\caption{Plot of error versus $\Delta t$ and $h$ for $a=2, b=2$.}
\end{figure}
For $a=1, b=1$, the solution is linear in both space and time. Since both first order N\'ed\'elec elements and backward Euler method are exact for linear functions, it was observed that the FE solution was accurate up to machine precision.

When $a=2, b=1$, the solution is quadratic in space and linear in time. Thus we expect to only have spatial error of first order in $h$, as shown in Figure \ref{figs:a2b1}. Similarly, for the case $a=1, b=2$, we observed temporal error of first order in $\Delta t$ in Figure \ref{figs:a1b2}.

For $a=2, b=2$, the solution is quadratic in both space and time. From Figure \ref{figs:a2b2vsdt}, first order error in $\Delta t$ was observed in time when the mesh was sufficiently fine. Similarly from Figure \ref{figs:a2b2vsh}, first order error in $h$ was observed in space when the time step size was sufficiently small.

\subsection{Numerical verification of reliability of a posteriori error estimators}

Next, we numerically verify the reliability of the error estimators presented in Section \ref{sec:apee}. We will first look at the case of a convex $C^{1,1}$ domain $\Omega$ given by the unit circle and then consider a nonconvex $C^{1,1}$ domain given by an annulus. Finally, we look at a moving front case with sharp gradient often encounter in practice for the $p$-curl problem. In all cases, the computational mesh $\Omega_h$ was constructed to be an interior mesh of $\Omega$ using the native mesh generator of FEniCS.

\subsubsection{Convex domain - circle}
For the unit circle $\Omega$, we generate an interior mesh $\Omega_h$ by specifying the number of perimeter segments of the polygonal domain to be inscribed inside the unit circle. For instance, if the number of segments is $N$ with equal length and the perimeter vertices lies on the unit circle, then elementary trigonometry shows that $\text{vol}(\Omega_h) = N\sin\left(\frac{\pi}{N}\right)\cos\left(\frac{\pi}{N}\right)$, which converges to $\pi = \text{vol}(\Omega)$ as $N\rightarrow \infty$. In the following, as we refine the mesh by reducing $h$ by half, we correspondingly also double the number of segments $N$ on the perimeter of $\Omega_h$.

On $\Omega$ and $t\in [0,1]$, we employed a radially symmetric inward moving front solution of the form $\boldsymbol u(r,t) = h(r,t) \hat{\boldsymbol \phi}$ with,
$$
h(r,t) = \left\{\begin{split}
(r-1+t)^a, & \hspace{4mm} r>1-t \\
0, & \hspace{4mm} r\leq 1-t
\end{split}\right.,
$$
where $a\geq 1$ is a parameter to be chosen. It can be checked that the current density has the form $\nabla\times \boldsymbol u(r,t) = j(r,t) \hat{\boldsymbol z}$ with
$$
j(r,t) = \left\{\begin{split}
(r-1+t)^{a-1}\left(a+1-\frac{1-t}{r}\right), & \hspace{4mm} r>1-t \\
0, & \hspace{4mm} r\leq 1-t
\end{split}\right..
$$ 
%$$
%j_r(r,t) = \left\{\begin{split}
%(r-1+t)^{a-2}\left(a^2-1+\frac{(a-1)(t-1)}{r}-\frac{(r+t-1)(t-1)}{r^2}\right), & \hspace{.5mm} r>1-t \\
%0, & \hspace{.5mm} r\leq 1-t
%\end{split}\right..
%$$
Thus, the corresponding forcing term is given by,
$$\boldsymbol f(r,t) = (h_t(r,t)-(p-1)j(r,t)^{p-2}j_r(r,t) )\hat{\boldsymbol \phi}.$$
The motivation for choosing this family of manufactured solutions originates from an exact analytical solution of Mayergoyz \cite{MikAE13} of the $p$-curl problem in 1D. In particular, it is known that the parameter $a=\frac{p-1}{p-2}$ for the 1D case and so $a\approx 1$ for large values of $p$. Moreover, it can be seen that as $a$ approaches $1$, the current density $j(r,t)$ has steeper gradients and converges pointwise to a discontinuous function. In fact for $t<1$, it can be checked that $j(r,t) \in W^{1,p}(\Omega)$ if and only if $a>2-\frac{1}{p}$.\footnote{Since $j_r \sim s^{a-2}$ where $s$ is the distance away from the front, $j_r \in L^p(\Omega) \Leftrightarrow p(a-2)+1>0$.} Thus, for $a$ close to $1$, we do not expect the FE approximation using N\'ed\'elec elements to be accurate, since its interpolation error requires $\nabla\times \boldsymbol u(r,t)=j(r,t)\hat{\boldsymbol z}$ to be at least a $W^{1,p}(\Omega)$ function \cite[Theorem 1.117]{ErnGue04}. For these reasons, we have focused on a  case satisfying $a>2-\frac{1}{p}$. More specifically, we have fixed $a=3$, $p=25$, $\Delta t$=5e-4. 

The integration in time was computed numerically using the composite midpoint rule. Also note that, since the initial field $\bb u_0(\bb x)=\bb 0\in \Xhz^{(k)}$, the initial error is identically zero. Moreover, recalling that first order N\'ed\'elec elements are element-wise divergence free, we omitted computing $\eta_d$ as it is identically zero. Finally, the boundary elements were refined to be sufficiently small such that the term $h_\partial^{\frac{2p}{p-2}}$ was negligible in the computation of $NC_2^2$.

\begin{figure}[h!]
\centering
	\label{figs:errVsEst}\includegraphics[width=0.5\textwidth]{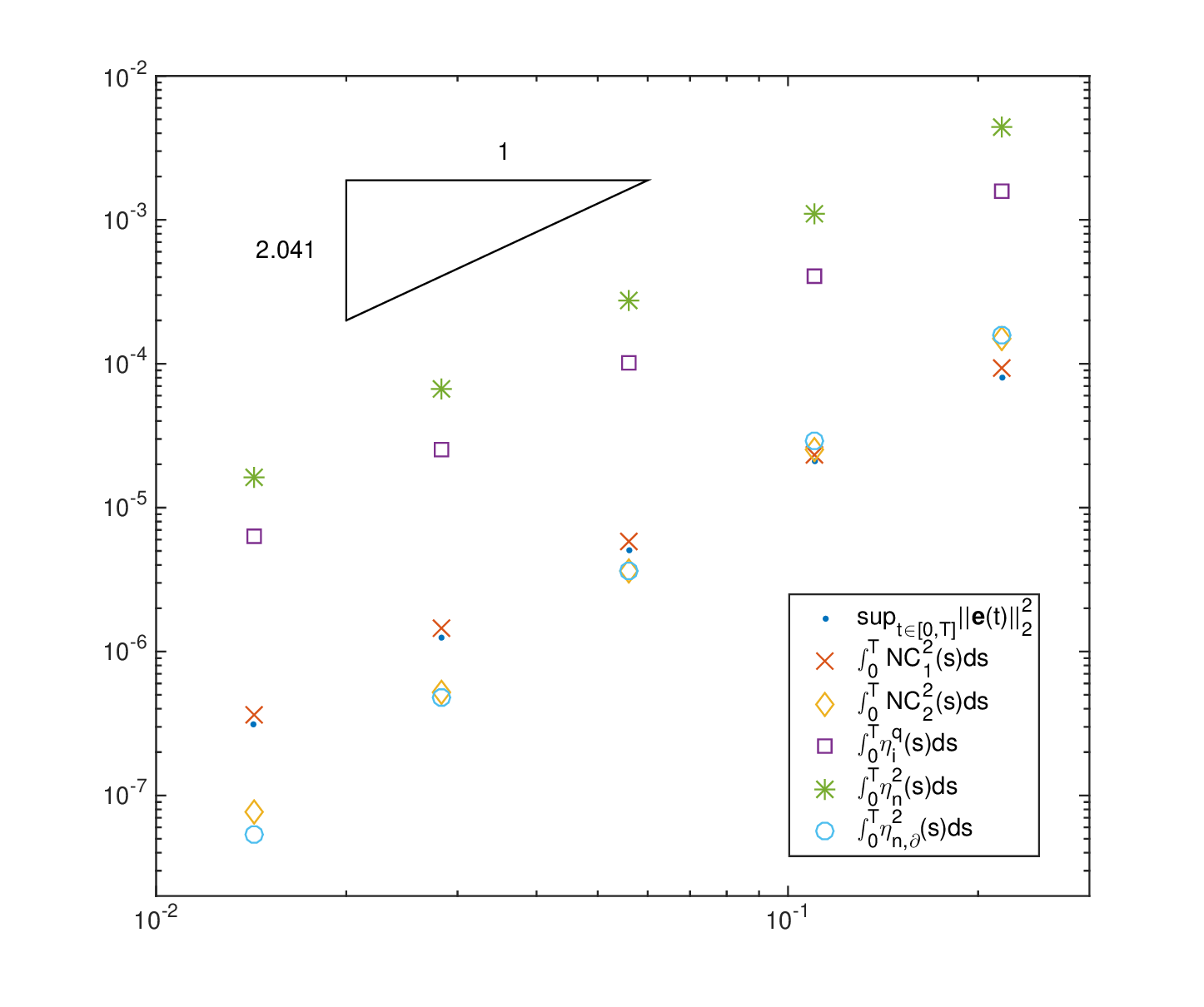}
	\caption{Comparison of error and estimators versus $h$ at $T=$4e-1.}
\end{figure}

In Figure \ref{figs:errVsEst}, the error in $\sup_{s\in[0,T]}\norm{\bb e(s)}^2_{L^2(\Omega)}$, nonconforming geometric error estimators $\int_0^T \NC_1^2(s)ds, \int_0^T \NC_2^2(s)ds$ and estimators $\int_0^T \eta_i^q(s) ds, \int_0^T \eta_n^2(s) ds$, and $\int_0^T \eta_{n,\partial}^2(s) ds$ from Theorem \ref{thm:main_thm2} are plotted for various mesh sizes $h$. Note that we have omitted showing $\int_0^T\norm{\nabla \times\bb e}^p_{L^p(\Omega)}(s) ds$, $\int_0^T \eta_d^2(s) ds$, $\int_0^T \eta_t^q(s) ds$ and $\int_0^T \eta_{t,\partial}^q(s) ds$ as their values were observed to be near machine precision zero due to their small magnitude and/or their dependence on the exponent of $p=25$. As illustrated, we observed quadratic order of convergence in $h$ for both the error and estimators showing agreement of the reliability of the estimators. This is consistent with the first order convergence of Section \ref{sec:numVer}, since the error quantity under consideration is squared with respect to the $L^2$ norm. We also observed that the error estimators $\int_0^T \NC_2^2(s)ds$ and $\int_0^T \eta_{n,\partial}^2(s) ds$ decreases at a faster rate due to the refinement of $\partial\Omega_h$.

In the absence of knowledge on the constants $C_1$ and $C_2$ from Theorem \ref{thm:main_thm2}, we can still measure the reliability of the error estimators by the quantity $\kappa$\footnote{For stationary problems, $\kappa$ is usually called the effectivity index of the error estimators.} defined as the ratio of estimators over the errors by,
$$
\kappa := \frac{\int_0^T \NC_1^2(s)+\NC_2^2(s)+\eta_d^2(s)+\eta_n^2(s)+\eta_{n,\partial}^2(s)+\eta_i^q(s)+\eta_t^q(s)+\eta_{t,\partial}^q(s) ds}{\sup_{s\in[0,T]}\norm{\bb e(s)}^2_{L^2(\Omega)}+\int_0^T\norm{\nabla \times\bb e(s)}^p_{L^p(\Omega)} ds}.
$$ Ideally, for efficient mesh adaptivity, one would like to have $\kappa\approx 1$. However, due to the unknown constants inherent in the present residual type error estimation and the dependence on $T$ due to time integration, we can only expect $\kappa$ to \emph{decrease} with $T$. In particular, since the error estimators from Theorem \ref{thm:main_thm2} are reliable, then $\kappa$ should be bounded below by the constant $\frac{\min\{1,C_1(p,\alpha)\}}{C_2(p,T)}$, where $C_2$ increases in the worst case exponentially with respect to $T$. 

\begin{figure}[h!]
\centering
	\subfloat[$\kappa$ versus $h$]{\label{figs:kappa_h}\includegraphics[width=0.45\textwidth]{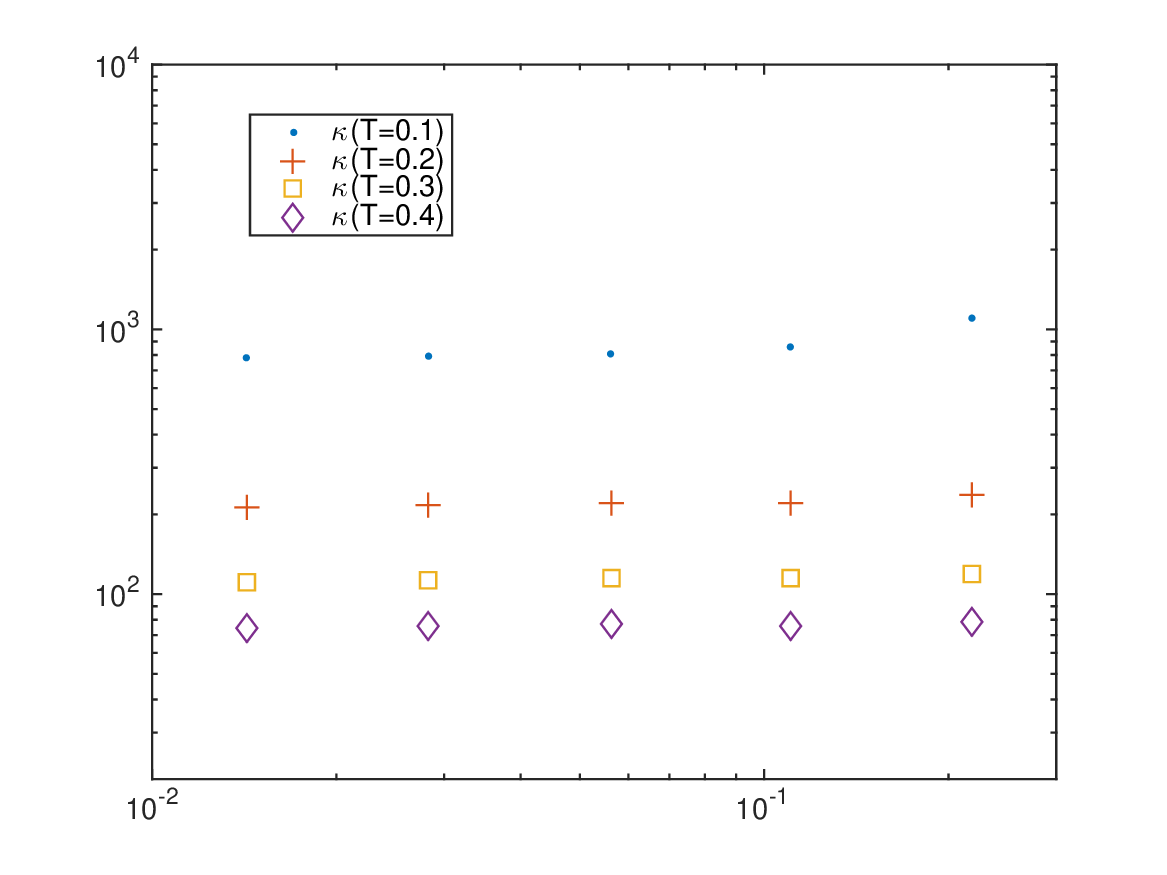}}
	\subfloat[$\kappa$ versus $T$]{\label{figs:kappa_T}\includegraphics[width=0.45\textwidth]{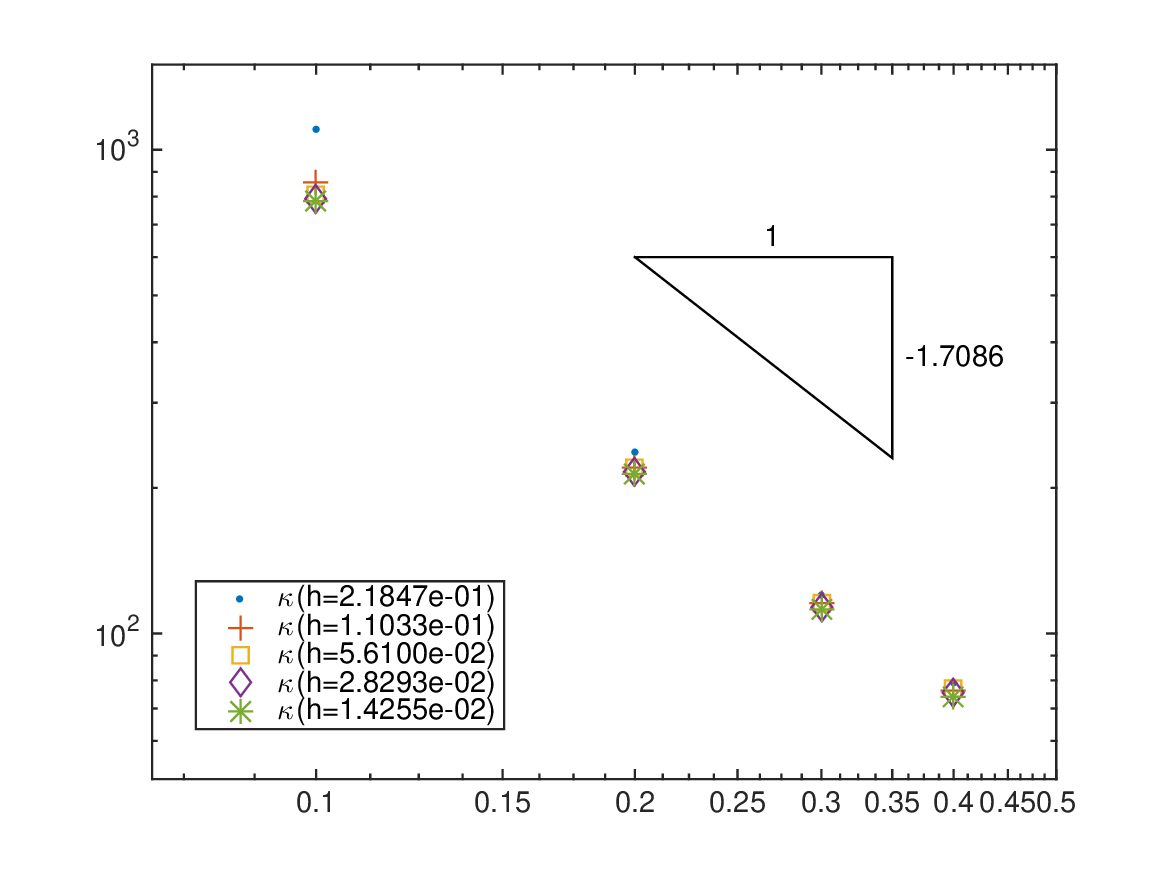}}
	\caption{Comparison of $\kappa$ versus $h$ and $T$.}
\end{figure}

In Figure \ref{figs:kappa_h}, $\kappa$ is shown to be largely independent of $h$ and decreases with $T$. This suggests that the error estimators are comparable to the actual error up to a factor of $\kappa$. Moreover, from Figure \ref{figs:kappa_T}, we see that $\kappa \sim T^{-1.71}$ which suggests that the exponential dependence on $T$ for the constant $C_2$ in Theorem \ref{thm:main_thm2} may be sharpened to $\sim T^{1.71}$ in this case.

%\begin{figure}[h!]
%\centering
%	\subfloat[Plot of $\bb u_h$]{\label{figs:hPlot}\includegraphics[width=0.45\textwidth]{figs/hPlot.png}}
%	\subfloat[Radial profile of $\nabla \times \bb u_h$]{\label{figs:hProfile}\includegraphics[width=0.45\textwidth]{figs/hProfile.png}}
%	\caption{Plot of $\bb u_h$ and radial profile along $x$-axis.}
%\end{figure}
%
%\begin{figure}[h!]
%\centering
%	\subfloat[Plot of $\nabla \times \bb u_h$]{\label{figs:jPlot}\includegraphics[width=0.45\textwidth]{figs/jPlot.png}}
%	\subfloat[Radial profile of $\nabla \times \bb u_h$]{\label{figs:jProfile}\includegraphics[width=0.45\textwidth]{figs/jProfile.png}}
%	\caption{Plot of $\nabla \times \bb u_h$ and radial profile along $x$-axis.}
%\end{figure}

%%%% OLD subfloat
%
%\begin{figure}[h!]
%\centering
%	\subfloat[Local error in $\bb u$]{\label{figs:hError}\includegraphics[width=0.42\textwidth]{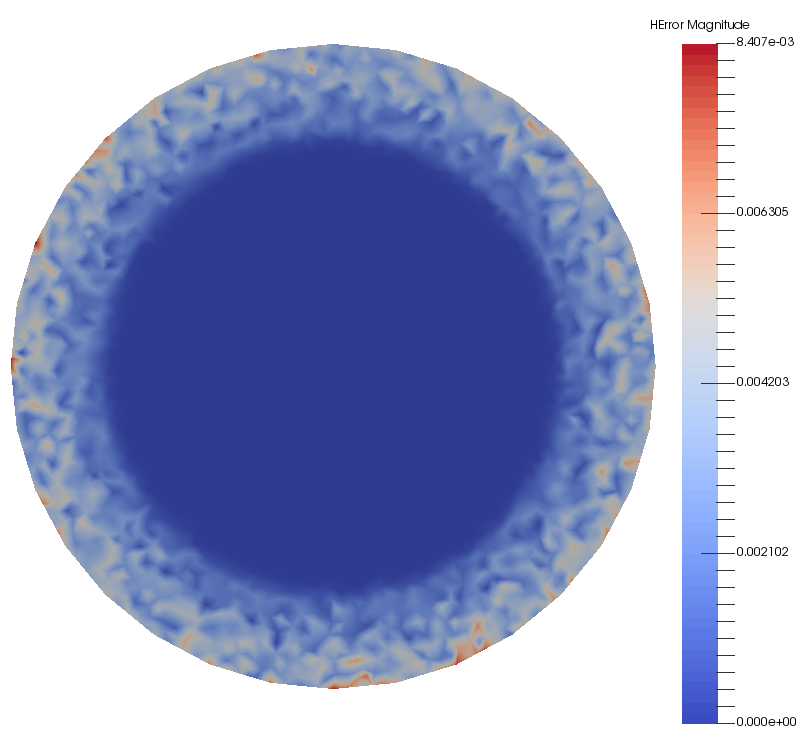}}
%	\subfloat[Local error in $\nabla\times \bb u$]{\label{figs:jError}\includegraphics[width=0.42\textwidth]{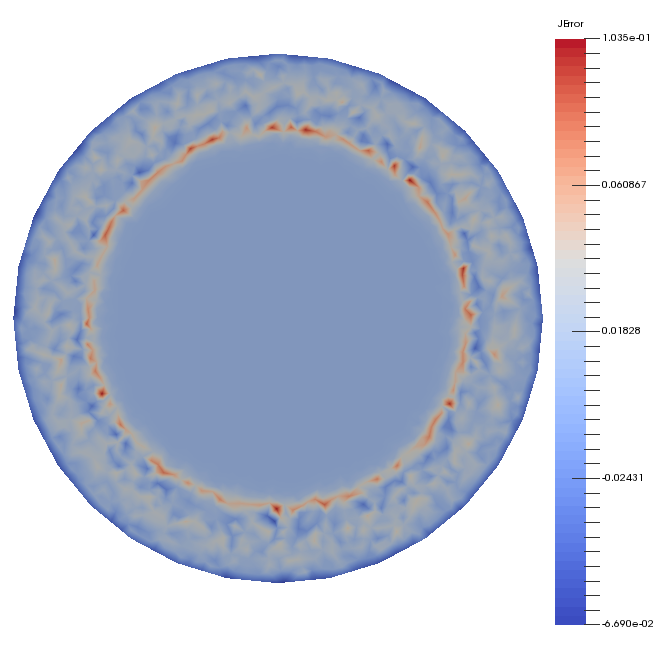}}
%	\caption{Local error of $\bb u$ and $\nabla\times \bb u$ at $t=0.272$.}
%\end{figure}
%\begin{figure}[h!]
%\centering
%	\subfloat[Local estimator $\eta_i$]{\label{figs:intEst}\includegraphics[width=0.42\textwidth]{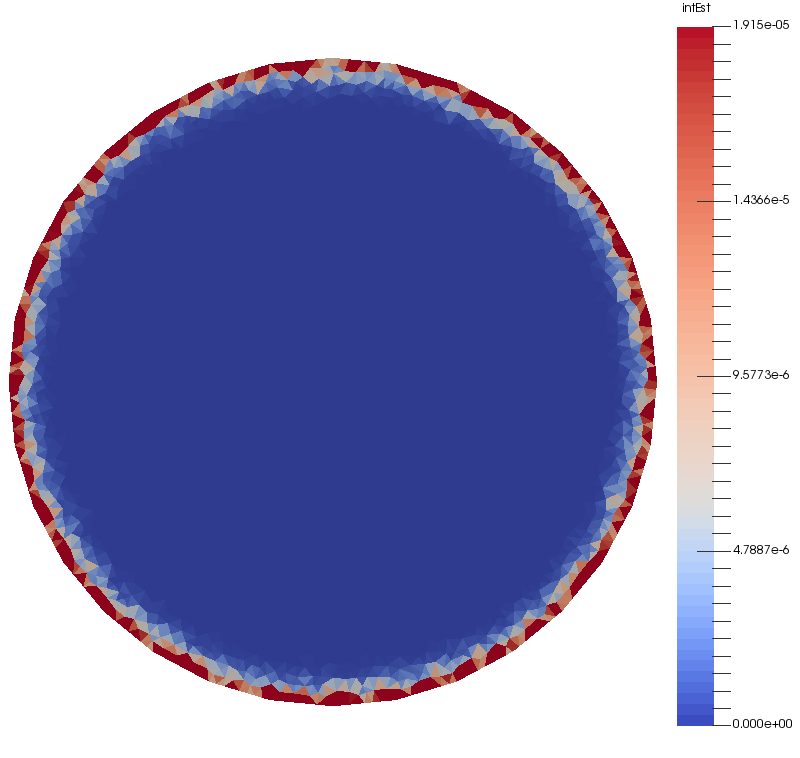}}
%	\subfloat[Local estimator $\eta_n$]{\label{figs:norEst}\includegraphics[width=0.42\textwidth]{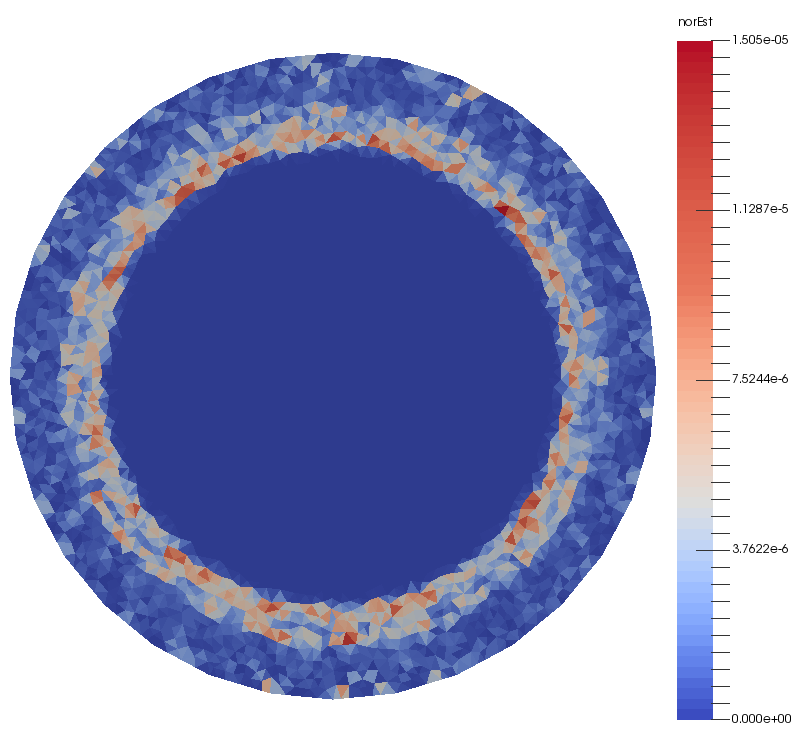}}
%	\caption{Local estimators $\eta_i$ and $\eta_n$ at $t=0.272$.}
%\end{figure}

\subsubsection{Nonconvex domain - annulus}

Next we consider a nonconvex $C^{1,1}$ domain given by the annulus region $\Omega = \{\bb x\in\mathbb{R}^2 : 0.5\leq \norm{\bb x}\leq 1\}$. We construct an interior mesh $\Omega_h$ by specifying the number of perimeter segments $N_o$ on the outer radius of $r = 1$ and removed a polygonal region with the number of perimeter segments $N_i$ inscribed on the inner radius $r=R$. In order the guarantee $\Omega_h \subset \Omega$, $R$ was chosen to be slightly \emph{larger} than $0.5$ so that the removed polygonal region covers the removed part of the annulus region of $\{\bb x\in \mathbb{R}^2 : \norm{\bb x}\leq 0.5\}$. For instance, if the perimeter segments on the inner radius is of equal length, then elementary trigonometry shows that the inner radius is $R = 0.5\left(\cos\left(\frac{\pi}{N_i}\right)\right)^{-1}> 0.5$ which converges to $0.5$ as $N_i\rightarrow \infty$. Similar to the unit circle case, as we refine the mesh by reducing $h$ by half, we correspondingly also double the number of segments on both the number of inner and outer perimeter segments $N_i$ and $N_o$ of $\Omega_h$.

On the annulus domain, we used a similar manufactured solution as the unit circle except the radially symmetric moving front solution is moving outward from $r=0.5$. The choice was made to differentiate the inward-moving solution of the unit circle case. Specifically, it has the form $\boldsymbol u(r,t) = h(r,t) \hat{\boldsymbol \phi}$ with,
$$
h(r,t) = \left\{\begin{split}
(t+0.5-r)^a, & \hspace{4mm} r\leq t+0.5 \\
0, & \hspace{4mm} r> t+0.5
\end{split}\right.,
$$where we have again chosen $a=3$, $p=25$ and $\Delta t = 5e-4$.

In Figure \ref{figs:AnnErrVsEst}, we observed similar quadratic order of convergence in $h$ for both the error and estimators. We also observed $\kappa$'s independence of $h$ in Figure \ref{figs:AnnKappa_h} and rate of decrease in $T$ in Figure \ref{figs:AnnKappa_T}, where $\kappa\sim T^{-1.94}$ in this case.

\begin{figure}[h!]
\centering
	\label{figs:AnnErrVsEst}\includegraphics[width=0.5\textwidth]{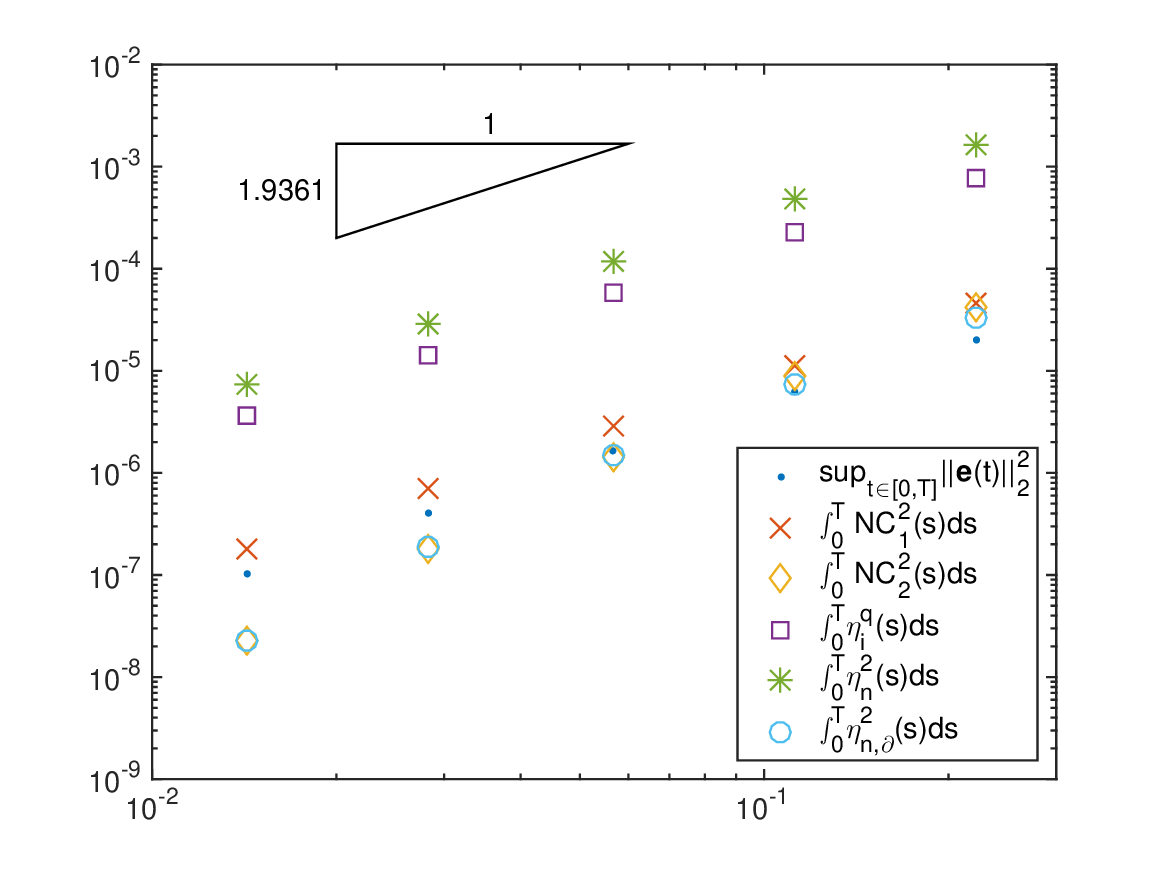}
	\caption{Comparison of error and estimators versus $h$ at $T=$3.2e-1.}
\end{figure}

\begin{figure}[h!]
\centering
	\subfloat[$\kappa$ versus $h$]{\label{figs:AnnKappa_h}\includegraphics[width=0.45\textwidth]{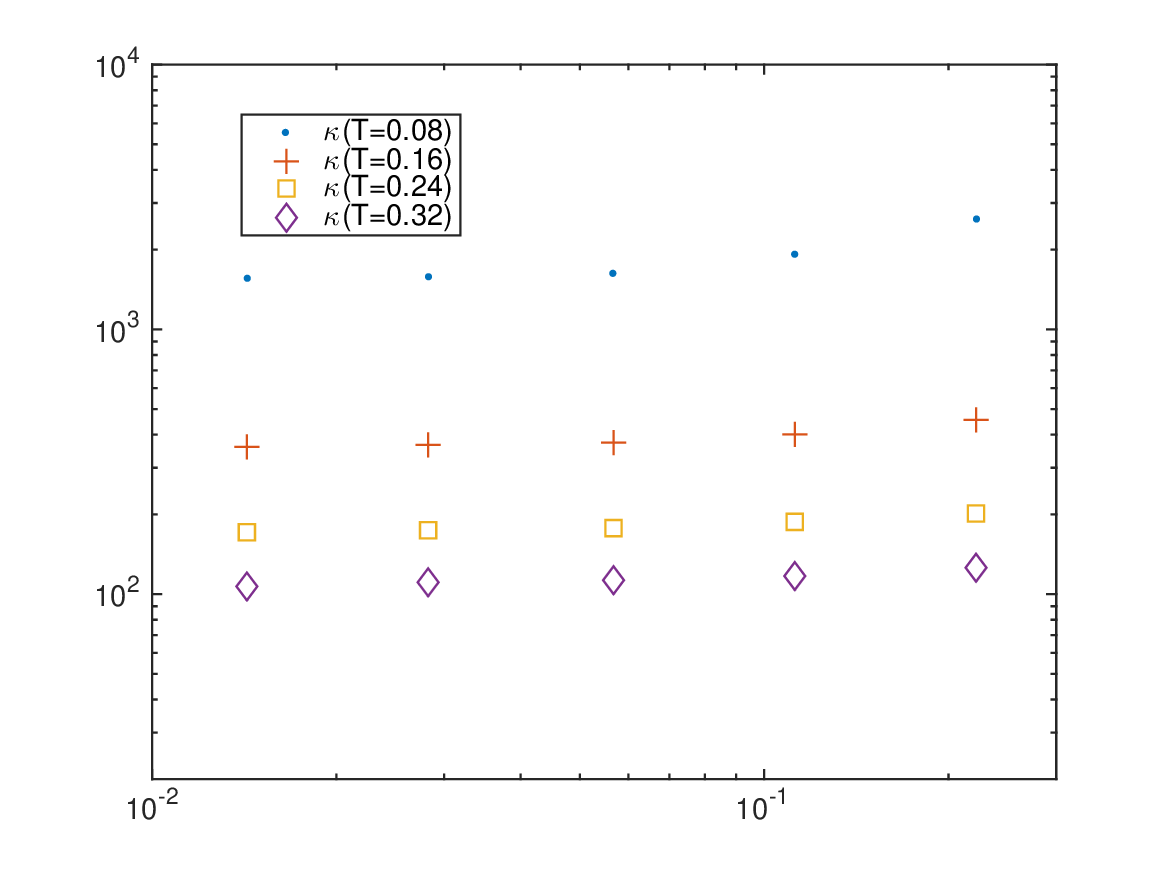}}
	\subfloat[$\kappa$ versus $T$]{\label{figs:AnnKappa_T}\includegraphics[width=0.45\textwidth]{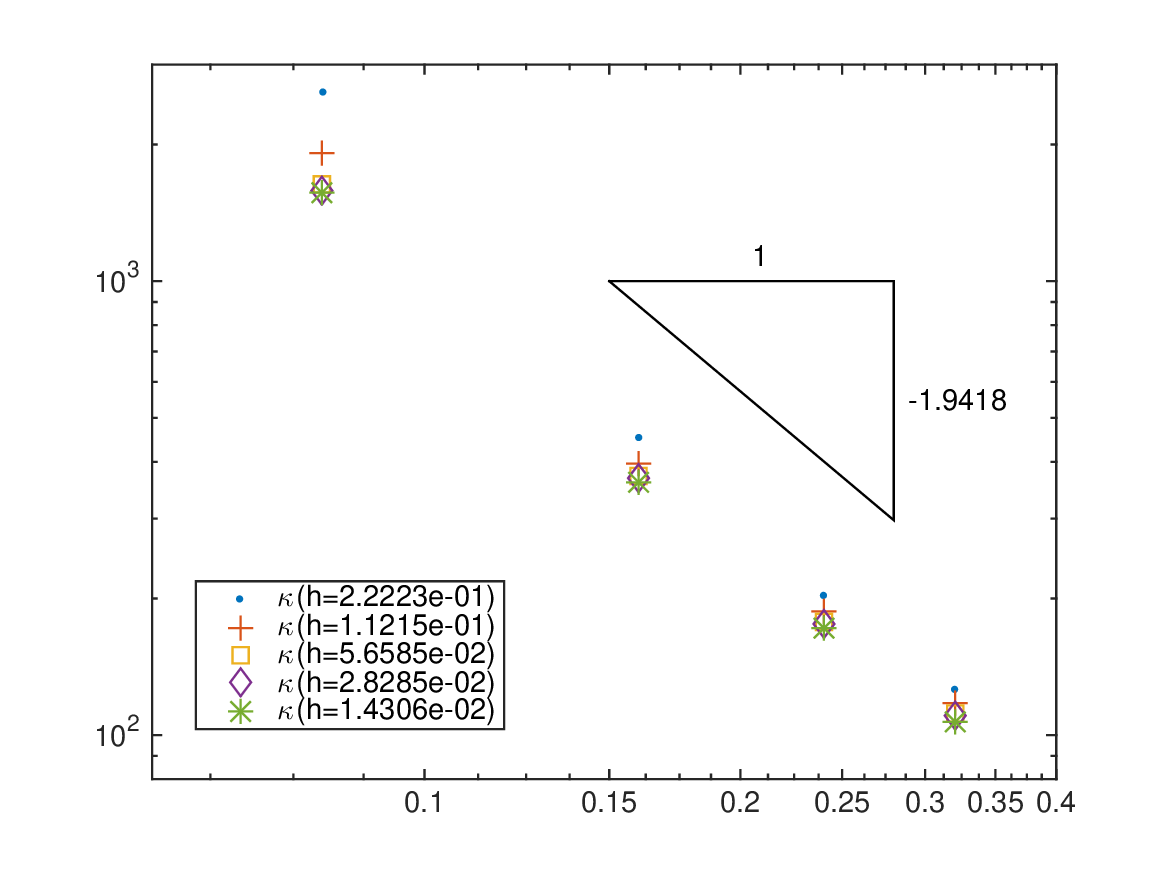}}
	\caption{Comparison of $\kappa$ versus $h$ and $T$.}
\end{figure}

\subsubsection{Nonsmooth case}

Finally, we look at a case for which $\nabla\times \bb u \notin W^{1,p}(\Omega)$. For this, we consider again the manufactured solution on the unit circle domain and we chose $a=1.6$ and $p=10$ so that $a<2-\frac{1}{p}$. The purpose here is to compare qualitatively between the error and estimators even in this nonsmooth case. As illustrated in Figure \ref{figs:jError} and Figure \ref{figs:norEst}, the region where the local estimators $\eta_n$ are largest agrees with regions where the sharp gradient occurs in the current density $\nabla\times \bb u$. Moreover, in Figure \ref{figs:hError} and Figure \ref{figs:intEst}, the local estimators $\eta_i$ identified the boundary region as where the increasing magnetic field $\bb u$ was being applied.

\begin{figure}[h!]
\centering
{\label{figs:hError}\includegraphics[width=0.675\textwidth]{figs/HError.png}}
\caption{Local $L^2$ error of $\bb u$ at $t=0.272$. The scale represents values between $0$ and $8.4\times 10^{-3}$.}
\end{figure}

\begin{figure}[h!]
\centering
{\label{figs:jError}\includegraphics[width=0.675\textwidth]{figs/JError.png}}
\caption{Local $L^2$ error of $\nabla\times \bb u$ at $t=0.272$. The scale represents values between $-6.7\times 10^{-2}$ and $1.0\times 10^{-1}$. Note that the largest errors occur at the moving front and at the boundary of the domain.}
\end{figure}

\begin{figure}[h!]
\centering
{\label{figs:intEst}\includegraphics[width=0.675\textwidth]{figs/intEst.png}}
\caption{Local estimator $\eta_i$ at $t=0.272$. The scale represents values between $0$ and $1.9\times 10^{-5}$.}
\end{figure}

\begin{figure}[h!]
\centering
{\label{figs:norEst}\includegraphics[width=0.675\textwidth]{figs/norEst.png}}
\caption{Local estimator $\eta_n$ at $t=0.272$. The scale represents values between $0$ and $1.5\times 10^{-5}$.}
\end{figure}

%*******************************
\section{Conclusion} 

This paper has presented an original a posteriori residual-based error estimator for a nonlinear
wave-like propagation problem modeling strong variations in the magnetic field density inside
high-temperature superconductors. The techniques used circumvent the non-conformity of the 
numerical approximations in a simple manner and the nonlinearities are handled using only
coercive properties of the spatial operator, and without any linearization. Preliminary numerical
results in two space dimensions indicate that the residuals are asymptotically exact, up to a constant. 

An important avenue for future research would be to develop error estimators which are both reliable and efficient. The work of Carstensen, Liu, and Yan on quasi-norms for the $p$-Laplacian appears to be the next natural step, given the similarities in the analytic framework underlying both problems \cite{LiuYan01,CarKlo03,CarLiuYan06a}. We also mention the recent optimality results of Diening and Kreuzer on adaptive finite element methods for the $p$-Laplacian \cite{DieKre08,BelDieKre12} and of Alaoui, Ern and Vohral\^ik on a posteriori error estimates for monotone nonlinear problems \cite{ElAErnVoh11}. 
Moreover, further investigation is needed concerning the efficiency for solving the nonlinear discrete problems arising from successive adaptive mesh based on such error estimators. At the moment, the optimal design of new high temperature superconducting devices is limited by the high computational cost of such simulations, and all means of improving this efficiency should be examined
in hopes of removing this bottleneck.
 
\appendix

\section{Non-homogeneous tangential boundary condition}
\label{app:non-homomBC}
We can account for the non-homogeneous tangential boundary conditions on $\partial \Omega$ by establishing a ``Duhamel's principle" for the $p$-curl problem. The novelty here is in the $L^p$ treatment of the homogeneous auxiliary variables and in the nonlinearity. 

Denote $\Wcurlz= \{\bb v\in \Wcurl:\nabla\times\bb v=0\}$ as the $L^p$ space of curl-free functions. It suffices to show the following:
\begin{theorem}
Let $\Omega$ be a $C^{1,1}$ bounded simply-connected domain in $\mathbb{R}^3$ and let $\boldsymbol g\in \gamma_t(\Wcurl)$ with $2\leq p<\infty$. For any $\boldsymbol u\in \Wcurl\cap\Wdivz$ with $\gamma_t(\boldsymbol u) = \boldsymbol g$, there exists a function $\boldsymbol u_g \in \Wcurlz\cap\Wdivz$ with $\gamma_t(\boldsymbol u_g) = \boldsymbol g$ and a function $\hat{\boldsymbol u}\in \X$ such that $\boldsymbol u = \hat{\boldsymbol u} + \boldsymbol u_g$.
 \label{liftThm}
\end{theorem}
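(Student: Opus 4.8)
The plan is to construct $\boldsymbol u_g$ as a curl-free, divergence-free lifting of the boundary data $\boldsymbol g$, and then simply set $\hat{\boldsymbol u} := \boldsymbol u - \boldsymbol u_g$ and verify it lies in $\X = \Wzcurl \cap \Wdivz$. The main work is therefore all in producing $\boldsymbol u_g$ with the required three properties: $\curl \boldsymbol u_g = 0$, $\div \boldsymbol u_g = 0$, and $\gamma_t(\boldsymbol u_g) = \boldsymbol g$. Once this is done, $\hat{\boldsymbol u} = \boldsymbol u - \boldsymbol u_g$ automatically satisfies $\gamma_t(\hat{\boldsymbol u}) = \boldsymbol g - \boldsymbol g = 0$, so $\hat{\boldsymbol u} \in \Wzcurl$, and $\div \hat{\boldsymbol u} = \div \boldsymbol u - \div \boldsymbol u_g = 0$, so $\hat{\boldsymbol u} \in \Wdivz$; hence $\hat{\boldsymbol u} \in \X$ and the decomposition $\boldsymbol u = \hat{\boldsymbol u} + \boldsymbol u_g$ holds.

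To build $\boldsymbol u_g$, I would start from \emph{any} element of $\Wcurl \cap \Wdivz$ having tangential trace $\boldsymbol g$ --- and the hypothesis of the theorem hands us exactly such a field, namely $\boldsymbol u$ itself (or any other preimage under $\gamma_t$ of $\boldsymbol g$ inside that space). Call this field $\boldsymbol w$. It has the correct boundary trace and is divergence-free, but generally $\curl \boldsymbol w \neq 0$. The idea is to correct $\boldsymbol w$ by subtracting a field in $\X$ that carries away the curl. Apply the Helmholtz-type / Hodge machinery available for $C^{1,1}$ domains (the $L^p$ Hodge decomposition of \cite{AmrSel13}, or equivalently solve the relevant $p$-curl--div elliptic system): find $\boldsymbol \psi \in \X$ solving $\curl \boldsymbol \psi = \curl \boldsymbol w$ with $\div \boldsymbol \psi = 0$ and $\gamma_t(\boldsymbol \psi) = 0$, together with the $L^p$ stability bound $\norm{\boldsymbol \psi}_{\Wcurl} \le C \norm{\curl \boldsymbol w}_{\L{p}}$. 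Solvability here requires the compatibility condition $\div(\curl \boldsymbol w) = 0$, which holds identically, plus simple-connectedness to rule out harmonic fields; both are in force. Then set $\boldsymbol u_g := \boldsymbol w - \boldsymbol \psi$. By construction $\curl \boldsymbol u_g = \curl \boldsymbol w - \curl \boldsymbol w = 0$, so $\boldsymbol u_g \in \Wcurlz$; $\div \boldsymbol u_g = \div \boldsymbol w - \div \boldsymbol \psi = 0$, so $\boldsymbol u_g \in \Wdivz$; and $\gamma_t(\boldsymbol u_g) = \gamma_t(\boldsymbol w) - \gamma_t(\boldsymbol \psi) = \boldsymbol g - 0 = \boldsymbol g$, using continuity and linearity of the tangential trace operator \eqref{tangTrace}. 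This is the desired $\boldsymbol u_g$.

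The main obstacle I anticipate is the regularity/solvability step for the auxiliary elliptic problem on a $C^{1,1}$ domain in the $L^p$ setting with $p$ possibly large: one needs that the $\curl$-$\div$ system (equivalently a vector potential problem) is well-posed in $\W{1}$-type spaces with boundary conditions $\gamma_t = 0$, and that the solution actually lands in $\Wzcurl \cap \Wdivz$ rather than merely in a weaker space. This is precisely where the $C^{1,1}$ hypothesis is used --- the harmonic analysis in $W^{1,p}$ over such domains (cf. the discussion around Theorem \ref{C1decomp} and \cite{AmrSel13,Mit04,AmrSel13}) is exactly what guarantees the needed regularity and the $L^p$ bound uniformly in $p$. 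A secondary point to handle carefully is that the nonlinearity plays no role in this lemma itself: Theorem \ref{liftThm} is a purely linear functional-analytic decomposition of $\Wcurl \cap \Wdivz$, and the ``Duhamel's principle'' and the nonlinear power-law structure only enter afterwards when this splitting is fed back into \eqref{pcurl-complete} to reduce the non-homogeneous boundary-value problem to a homogeneous one for $\hat{\boldsymbol u}$. I would remark on this but not carry it out here, since the statement to be proved is only the decomposition.
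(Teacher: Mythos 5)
Your overall architecture (lift the trace, correct the lifting to be curl- and divergence-free, then set $\hat{\boldsymbol u}=\boldsymbol u-\boldsymbol u_g$) matches the paper's, and starting from the divergence-free field $\boldsymbol u$ itself so as to skip a divergence correction is a genuine simplification. However, the central step has a gap: the auxiliary problem ``find $\boldsymbol \psi\in\X$ with $\curl\boldsymbol\psi=\curl\boldsymbol w$'' is in general \emph{not} solvable, and the compatibility conditions you check ($\div(\curl\boldsymbol w)=0$ and simple-connectedness) are not the binding ones. For any $\boldsymbol\psi\in\Wzcurl$, testing \eqref{tangTrace} with gradients and then using \eqref{normalTrace} shows that $\rpair{\curl\boldsymbol\psi,\grad\theta}=0$ for all smooth $\theta$, i.e.\ $\gamma_n(\curl\boldsymbol\psi)=0$ (equivalently, by Stokes' theorem on boundary patches, $\curl\boldsymbol\psi\cdot\boldsymbol n=0$ on $\partial\Omega$). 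Hence $\curl\boldsymbol\psi=\curl\boldsymbol w$ with $\boldsymbol\psi\in\X$ forces $\gamma_n(\curl\boldsymbol w)=0$, a real restriction that does not follow from $\boldsymbol w\in\Wcurl\cap\Wdivz$: for instance $\boldsymbol w=\tfrac{1}{2}(-y,x,0)$ on the unit ball is divergence-free with $\curl\boldsymbol w=(0,0,1)$, whose normal trace $z$ does not vanish on the sphere. The vector-potential theorem you invoke therefore does not apply as stated; you must either verify this normal-trace condition or add a hypothesis on $\boldsymbol g$ that guarantees it.

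For comparison, the paper proceeds differently at exactly this point. It starts from an arbitrary lifting $\tilde{\boldsymbol u}_g\in\Wcurl$ of $\boldsymbol g$ (and so needs an extra scalar Dirichlet problem, solved by an inf--sup argument built on the Helmholtz decomposition of Theorem \ref{C1decomp}, to restore the divergence constraint), and then corrects the curl not through the strong first-order system but through the Petrov--Galerkin problem $\rpair{\curl\boldsymbol w,\curl\boldsymbol\psi}=-\rpair{\curl\tilde{\boldsymbol u}_g,\curl\boldsymbol\psi}$ for all $\boldsymbol\psi\in\Xq$, whose solvability follows from the inf--sup condition of Lemma 5.1 of \cite{AmrSel13}. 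That weak problem is always solvable; the delicate point is then transferred to the passage from the weak orthogonality $\rpair{\curl(\boldsymbol w+\tilde{\boldsymbol u}_g),\curl\boldsymbol\psi}=0$ for all $\boldsymbol\psi\in\Xq$ to the pointwise conclusion $\curl(\boldsymbol w+\tilde{\boldsymbol u}_g)=0$, which is where the same normal-trace obstruction reappears. So the condition you omitted is not a routine detail but the mathematical heart of the decomposition, and any complete proof must confront it explicitly.
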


Indeed, if such decomposition exists, since $\boldsymbol u_g$ is curl- and divergence-free, the non-homogeneous $p$-curl problem 
%\begin{equation}  \label{pcurl-nonhomog}
%\begin{array}{r@{\hspace{2pt}}c@{\hspace{2pt}}ll}
%       \pdt\bb u+\curl \left[ \rho(\curl \bb u)\curl \bb u\right]&=&\bb f,  &\mbox{ in } I\times\Omega, \\
%       \nabla \cdot \bb u &= &0,  & \mbox{ in } I\times\Omega,  \\
%      \bb u(0,\cdot)&=&\bb u_0(\cdot),  & \mbox{ in } \Omega, \\
%      \bb n\times \bb u&=&\boldsymbol g,  &\mbox{ on } I\times\partial \Omega,
%\end{array}
%\end{equation}
reduces to the homogeneous $p$-curl problem,
\begin{equation}  \label{pcurl-redhomog}
\begin{array}{r@{\hspace{2pt}}c@{\hspace{2pt}}ll}
       \pdt\hat{\bb u}+\curl \left[ \rho(\curl \hat{\bb u})\curl \hat{\bb u}\right]&=&\bb f-\pdt\bb u_g,  &\mbox{ in } I\times\Omega, \\
       \nabla \cdot \hat{\bb u} &= &0,  & \mbox{ in } I\times\Omega,  \\
      \hat{\bb u}(0,\cdot)&=&\bb u_0(\cdot)-\bb u_g(0,\cdot),  & \mbox{ in } \Omega, \\
      \bb n\times \hat{\bb u}&=&0,  &\mbox{ on } I\times\partial \Omega.
\end{array}
\end{equation}
  
%  Moreover, define the nonlinear pairing $\mathcal{P}$,
%$$
%  \langle \mathcal{P} \mathbf{u} , \mathbf{\phi} \rangle :=    \int_{\Omega}   \big| \curl \mathbf{u}  \big|^{p-2} \curl \mathbf{u} \boldsymbol{\cdot}  \curl \mathbf{\phi}  \, d \mathbf{x}    ,        \\  
%  $$
%and observe that since $\boldsymbol F$ is curl-free, $ \langle \mathcal{P}(\boldsymbol u_0+\boldsymbol F),\mathbf{\phi} \rangle =  \langle \mathcal{P}\boldsymbol u_0,\mathbf{\phi} \rangle$.
%We have therefore shown that the solution $\boldsymbol u$ to \eqref{Hform1} satisfies the following identity written in terms  
%of $\boldsymbol u_0$ : for all $\mathbf{\phi} \in C^1_0(\Omega\times I_n)$, $\boldsymbol u_0$ satisfies
%\begin{equation}   \label{weak_form}
%    A_n(\boldsymbol u_0, \mathbf{\phi}) + \alpha \int_{I_n} \langle \mathcal{P}\boldsymbol u_0, \mathbf{\phi} \rangle dt =   R_n(\mathbf{\phi}) - A_n(\boldsymbol F, \mathbf{\phi}) , 
%\end{equation}
%where $A_n$ and $R_n$ are linear with respect to both of their arguments and given by
%\begin{align*}
% A_n( \textbf{u}, \mathbf{\phi} ) := & \int_{\Omega \times I_n}  \mathbf{\phi} \boldsymbol{\cdot}  \textbf{u}_t \, d\mathbf{x} dt , \\             
%  R_n(\mathbf{\phi}) := &  - \int_{\Gamma_N \times I_n} (\mathbf{n} \times \mathbf{\phi})\times \mathbf{n} \boldsymbol{\cdot}  \mathbf{g}(\mathbf{s},t) \, d\mathbf{s} dt  .
%\end{align*}

\begin{proof}
Given a function $\boldsymbol g\in \gamma_t(W^p(\text{curl};\Omega))$, we construct $\boldsymbol u_g\in \Wcurlz\cap\Wdivz$ in three main steps. 

First, let $\tilde{\boldsymbol u}_g\in W^p(\text{curl};\Omega)$ be such that $\gamma_t(\tilde{\boldsymbol u}_g)=\boldsymbol g$. Such $\tilde{\boldsymbol u}_g$ exists by the surjectivity of the image space $\gamma_t(W^p(\text{curl};\Omega))$. 

Second, let $v \in W^{1,p}_0(\Omega)$ be the solution to the problem:
\begin{equation}
\rpair{\nabla v, \nabla\psi}=\rpair{\tilde{\boldsymbol u}_g,\nabla \psi}, 
                                    \quad \forall \psi\in W^{1,q}_0(\Omega). \label{1stEqn}
\end{equation} Such a function $v$ exists if the following two conditions hold \cite{ErnGue04}:
\begin{equation}
                  0 <    \inf_{0\neq\phi\in W^{1,p}_0(\Omega)} \sup_{0\neq\psi\in W^{1,q}_0(\Omega)}
                               \frac{\rpair{\nabla \phi, \nabla \psi}}{\norm{\phi}_{W^{1,p}_0(\Omega)}\norm{\psi}_{W^{1,q}_0(\Omega)}}, \label{infsup1}
\end{equation} and if for all $\phi \in W^{1,p}_0(\Omega)$,
\begin{equation}
           \rpair{\nabla \phi, \nabla\psi} = 0, \quad \forall \psi \in W^{1,q}_0(\Omega) 
                                   \quad \Rightarrow \quad \phi = 0. \label{nondegen1}
\end{equation}

We first show the inf-sup condition. From the Helmholtz decomposition of Theorem \ref{C1decomp}, for $\boldsymbol v\in W^{1,q}_0(\Omega)^3$, there exists $\boldsymbol z_v\in \Xq$ and $\phi_v\in W^{1,q}_0(\Omega)$ such that $\boldsymbol v=\boldsymbol z_v+\nabla \phi_v$ with $\norm{\boldsymbol z_v}_{L^q}+\norm{\nabla \phi_v}_{L^q}\leq C\norm{\boldsymbol v}_{L^q}$ for some constant $C>0$. In particular, for any $\phi\in W^{1,p}_0(\Omega)$, $\rpair{\nabla \phi, \boldsymbol z_v}=0$. This implies that for any $\phi\in W^{1,p}_0(\Omega)$,
\begin{align*}
 \norm{\phi}_{W^{1,p}_0(\Omega)} 
                                      &= \sup_{0\neq\boldsymbol v\in L^q(\Omega)}
                                                 \frac{\rpair{\nabla \phi, \boldsymbol v}}{\norm{\boldsymbol v}_{L^q(\Omega)}} 
                                        = \sup_{0\neq\boldsymbol v\in L^q(\Omega)}
                                                 \frac{\rpair{\nabla \phi, \nabla \phi_v}}{\norm{\boldsymbol v}_{L^q(\Omega)}} \\ 
                                     &\leq C\sup_{0\neq\boldsymbol v\in L^q(\Omega)}
                                          \frac{\rpair{\nabla \phi, \nabla \phi_v}}{\norm{\nabla \phi_v}_{L^q(\Omega)}} 
                                       \leq C\sup_{0\neq\psi\in W^{1,q}_0(\Omega)}
                                           \frac{\rpair{\nabla \phi, \nabla \psi}}{\norm{\nabla \psi}_{L^q(\Omega)}}.
\end{align*}
Since the norm $\norm{\nabla \psi}_{L^q(\Omega)}$ is equivalent to $\norm{\psi}_{W^{1,q}_0(\Omega)}$ for $\psi \in W^{1,q}_0(\Omega)$, dividing the above inequality by $\norm{\phi}_{W^{1,p}_0(\Omega)}$ and taking the infimum over $\phi\in W^{1,p}_0(\Omega)$ shows the inf-sup condition \eqref{infsup1} is satisfied. 

We now explain why condition \eqref{nondegen1} also holds. For $\psi = \phi \in W_0^{1,2}(\Omega) \subset W_0^{1,q}(\Omega)$, by Poincar\'{e}'s inequality the condition $0 = \rpair{\nabla \phi, \nabla\psi} = \norm{\grad \phi}_{\Ldom{2}}^2$ implies that $\phi=0$ almost everywhere. Thus, a unique solution $v\in W^{1,p}_0(\Omega)$ to \eqref{1stEqn} exists.

Third, let $\boldsymbol w\in \X$ be the solution to the problem:
\begin{equation}
\rpair{\nabla \times \boldsymbol w, \nabla \times \boldsymbol \psi}=\rpair{-\nabla \times\tilde{\boldsymbol u}_g,\nabla\times \boldsymbol\psi}, \forall \boldsymbol\psi\in \Xq. \label{2ndEqn}
\end{equation} Similarly, such a function $\boldsymbol w$ exists if the following two conditions hold:
\begin{equation}
0<\inf_{0\neq\boldsymbol\phi\in \X} \sup_{0\neq\boldsymbol\psi\in \Xq}\frac{\rpair{\nabla \times\boldsymbol \phi, \nabla \times\boldsymbol\psi}}{\norm{\boldsymbol \phi}_{\X}\norm{\boldsymbol\psi}_{\Xq}}, \label{infsup2}
\end{equation} and if for all $\bb \phi\in \X$,
\begin{equation}
\rpair{\nabla \times\boldsymbol \phi, \nabla \times\boldsymbol\psi} = 0, \quad  \forall \bb\psi \in \Xq 
                      \quad \Rightarrow \quad \bb{\phi} = 0. \label{nondegen2}
\end{equation}
By Lemma 5.1 of \cite{AmrSel13}, the inf-sup condition \eqref{infsup2} is satisfied. Moreover, since for $\bb\psi = \bb \phi \in V^2(\Omega) \subset \Xq$, $0 = \rpair{\nabla \times\bb{\phi}, \nabla \times\boldsymbol\psi} 
= \norm{\nabla \times \bb \phi}_{\Ldom{2}}^2$ implies $\bb \phi=0$ a.e. by the equivalence of the semi-norm on $\X$; see Corollary 3.2 of \cite{AmrSel13}. Hence, a unique solution $\boldsymbol w \in \X$ to \eqref{2ndEqn} exists.

Combining these three functions, we define
$$
          \boldsymbol u_g := \boldsymbol w + \tilde{\boldsymbol u}_g -\nabla v \in \Wcurl \, .
$$
Note that $\gamma_t(\boldsymbol u_g) = \gamma_t(\boldsymbol w) + \gamma_t(\tilde{\boldsymbol u}_g) -\gamma_t(\nabla v) = \boldsymbol g$, since $\boldsymbol w \in \X$ and $v\in W^{1,p}_0(\Omega)$. Since $\boldsymbol w\in \X$ is divergence-free, $\nabla \cdot \boldsymbol u_g = \nabla \cdot (\tilde{\boldsymbol u}_g -\nabla v) = 0$ as $v$ satisfies \eqref{1stEqn}. Moreover, $\nabla \times \boldsymbol u_g = \nabla \times (\boldsymbol w+\tilde{\boldsymbol u}_g)=0$ since $\boldsymbol w$ satisfies \eqref{2ndEqn}; i.e. $\boldsymbol u_g \in \Wcurlz\cap \Wdivz$. Thus, $\boldsymbol u_g\in \Wcurlz\cap \Wdivz$ with $\gamma_t(\boldsymbol u_g)=\boldsymbol g$. 

Finally, defining $\hat{\boldsymbol u} := \boldsymbol u - \boldsymbol u_g \in \Wcurl\cap \Wdivz$ and noting $\gamma_t(\hat{\boldsymbol u}) = \gamma_t(\boldsymbol u) - \gamma_t(\boldsymbol u_g) = 0$, $\boldsymbol u - \boldsymbol u_g \in \X$. This shows that $\boldsymbol u = \hat{\boldsymbol u} + \boldsymbol u_g$ as claimed.
\end{proof}

\section*{Acknowledgments}
The authors would like to thank Fr\'ed\'eric Sirois for the countless discussions on the physical model of the $p$-curl problem. Also, the first author would like to thank Gantumur Tsogtgerel for his valuable comments.  Portions of results of Section \ref{sec:apee} and Section \ref{sec:ACloss} were included in the first author's PhD thesis \cite{Wan14}. The authors would also like to thank the anonymous reviewers for their time and helpful suggestions to improve the paper. 

\bibliographystyle{siamplain}
\bibliography{refs}

\end{document}